\newcommand{\Fg}{\mathfrak{g}}
\newcommand{\Fh}{\mathfrak{h}}
\newcommand{\FH}{\mathfrak{H}}
\newcommand{\Fs}{\mathfrak{s}}
\newcommand{\Ft}{\mathfrak{t}}
\newcommand{\Fz}{\mathfrak{z}}
\newcommand{\FS}{\mathfrak{S}}
\newcommand{\FA}{\mathfrak{A}}
\newcommand{\BC}{\mathbb{C}}
\newcommand{\BR}{\mathbb{R}}
\newcommand{\BZ}{\mathbb{Z}}
\newcommand{\BF}{\mathbb{F}}
\newcommand{\CC}{\mathcal{C}}
\newcommand{\MCD}{\mathcal{D}}
\newcommand{\CL}{\mathcal{L}}
\newcommand{\CI}{\mathcal{I}}
\newcommand{\SC}{\mathsf{C}}
\newcommand{\ad}{\mathop{\rm ad}\nolimits}
\newcommand{\id}{\mathop{\rm id}\nolimits}
\newcommand{\Hom}{\mathop{\rm Hom}\nolimits}
\newcommand{\End}{\mathop{\rm End}\nolimits}
\newcommand{\Ind}{\mathop{\rm Ind}\nolimits}
\newcommand{\Aut}{\mathop{\rm Aut}\nolimits}
\newcommand{\rank}{\mathop{\rm rank}\nolimits}
\newcommand{\Sym}{\mathop{\rm Sym}\nolimits}
\newcommand{\Img}{\mathop{\rm Im}\nolimits}
\newcommand{\Ker}{\mathop{\rm Ker}\nolimits}
\newcommand{\Ni}{\mathop{\rm Ni}\nolimits}
\newcommand{\Co}{\mathop{\rm Co}\nolimits}
\newcommand{\ten}{\mathrm{X}}
\newcommand{\Con}[2]{\mathrm{Con}(#1;#2)}
\newcommand{\pair}[2]{\langle #1,\,#2 \rangle}
\newcommand{\Gone}{\bigl\langle \omega^{(6)} \bigr\rangle}
\newcommand{\sbg}{\preceq}
\newcommand{\ve}{\varepsilon}
\newcommand{\vp}{\varphi}
\newcommand{\ha}[1]{\widehat{#1}}
\newcommand{\ti}[1]{\widetilde{#1}}
\newcommand{\ol}[1]{\overline{#1}}
\newcommand{\kc}{\langle \kappa \rangle}
\newcommand{\bk}{{\bf k}}
\newcommand{\bmu}{{\boldsymbol \mu}}
\newcommand{\bqed}{\quad \hbox{\rule[-0.5pt]{3pt}{8pt}}}
\renewcommand\section{\@startsection{section}{1}{0pt}
{-3.5ex plus -1ex minus -.2ex}{1.0ex plus .2ex}{\large\bf}}
\renewcommand\subsection{\@startsection{subsection}{1}{0pt}
{2.5ex plus 1ex minus .2ex}{-1em}{\bf}}
\newcommand{\svsp}{\vspace{1.5mm}}
\newcommand{\vsp}{\vspace{3mm}}
\theoremstyle{plain}
\newtheorem{thm}{Theorem}[subsection]
\newtheorem{lem}[thm]{Lemma}
\newtheorem{prop}[thm]{Proposition}
\newtheorem{claim}{Claim}[thm]
\newtheorem{ithm}{Theorem}
\newtheorem*{claim*}{Claim}
\theoremstyle{definition}
\theoremstyle{remark}
\newtheorem{rem}[thm]{Remark}
\newenvironment{enu}{%
 \begin{enumerate}%
}{\end{enumerate}}
\begin{document}

\setlength{\baselineskip}{18.3pt}

\title{\Large\bf 
Automorphisms of Niemeier lattices \\[1.5mm]
for Miyamoto's $\BZ_3$-orbifold construction
}
\author{
 Motohiro Ishii%
\footnote{M.I. was partially supported by the Japan Society for the 
Promotion of Science Research Fellowships for Young Scientists, and
by Grant-in-Aid for Research Activity Start-up No. 26887002, Japan} \\
 \small Research Center for Pure and Applied Mathematics, \\
 \small Graduate School of Information Sciences, Tohoku University, \\
 \small Aramaki aza Aoba 6-3-09, Aoba-ku, Sendai 980-8579, Japan \\
 \small (e-mail: {\tt ishii@math.is.tohoku.ac.jp}) \\[5mm]
 Daisuke Sagaki%
\footnote{D.S. was partially supported by 
Grant-in-Aid for Young Scientists (B) No.\,23740003, Japan.} \\
 \small Institute of Mathematics, University of Tsukuba, \\
 \small Tennodai 1-1-1, Tsukuba, Ibaraki 305-8571, Japan \\
 \small (e-mail: {\tt sagaki@math.tsukuba.ac.jp}) \\[5mm]
 Hiroki Shimakura%
\footnote{H.S. was partially supported 
by Grant-in-Aid for Scientific Research (C) No.\,23540013, Japan, 
by Grant-in-Aid for Young Scientists (B) No.\,26800001, Japan, and 
by Grant for Basic Science Research Projects from The Sumitomo Foundation.} \\
 \small Research Center for Pure and Applied Mathematics, \\
 \small Graduate School of Information Sciences, Tohoku University, \\
 \small Aramaki aza Aoba 6-3-09, Aoba-ku, Sendai 980-8579, Japan \\
 \small (e-mail: {\tt shimakura@m.tohoku.ac.jp})
}
\date{}
\maketitle

%
%

\begin{abstract} \setlength{\baselineskip}{16pt}
We classify, up to conjugation, 
all automorphisms of Niemeier lattices to which
we can apply Miyamoto's orbifold construction. 
Using this classification, we prove that 
the VOAs obtained in \cite{M} and \cite{SS} 
are all of holomorphic non-lattice VOAs 
which we can obtain by applying 
the $\BZ_3$-orbifold construction to 
a Niemeier lattice and its automorphism.
\end{abstract}
%
%
\section{Introduction.}
\label{sec:intro}

In \cite{M}, Miyamoto gave a $\BZ_{3}$-orbifold construction 
for holomorphic vertex operator algebras (VOAs for short), 
and obtained a new holomorphic VOA of central charge $24$ 
(whose Lie algebra of 
the weight one subspace is of type $E_{6,3}G_{2,1}^3$) 
by applying his construction to 
the Niemeier lattice $\Ni(E_{6}^{4})$ and 
its automorphism of order $3$ 
(which we denote by $\sigma_{6}$ in \S\ref{subsec:s6}). 
Also, he obtained a holomorphic VOA of central charge $24$ whose weight one subspace is 
identical to $\{0\}$, by applying his $\BZ_{3}$-orbifold 
construction to the Leech lattice VOA and its fixed-point-free 
automorphism of order $3$ (which we denote by $\sigma_{7}$); 
this holomorphic VOA is conjecturally isomorphic to 
the Moonshine VOA $V^{\natural}$. 
Then, in \cite{SS}, 
we found another five pairs of a Niemeier lattice and 
its automorphism of order $3$ from which we can obtain 
new holomorphic VOAs of central charge $24$ by 
Miyamoto's $\BZ_{3}$-orbifold construction 
(for the definitions of 
$\sigma_{1},\,\sigma_{2},\,\dots,\,\sigma_{5}$ in the table below, 
see \S\S\ref{subsec:s1}\,--\,\ref{subsec:s5}):
\begin{equation*}
\begin{array}{c|c|c|c}
\text{Ref.} 
  & \text{\begin{tabular}{c} Niemeier lattice, \\
          Automorphism \end{tabular}}
  & \text{\begin{tabular}{c} Lie algebra structure of \\ 
       the weight one subspace \end{tabular}} 
  & \text{\begin{tabular}{c} 
    No.\,in \\ \cite[Table~1]{Sch} \end{tabular}} \\[1.5mm] 
  \hline\hline
  & & & \\[-4mm]
\text{\cite[\S3]{SS}}
  & \Ni(A_{2}^{12}), \ \sigma_{1} 
  & A_{2,3}^{6} & 6 \\[1mm]
\text{\cite[\S4]{SS}}
  & \Ni(D_{4}^{6}), \ \sigma_{2} 
  & A_{2,3}^{6} & 6 \\[1mm]
\text{\cite[\S5]{SS}}
  & \Ni(D_{4}^{6}), \ \sigma_{3} 
  & E_{6,3}G_{2,1}^{3} & 32 \\[1mm]
\text{\cite[\S6]{SS}}
  & \Ni(D_{4}^{6}),\ \sigma_{4} 
  & A_{5,3}D_{4,3}A_{1,1}^{3} & 17 \\[1mm]
\text{\cite[\S7]{SS}}
  & \Ni(A_{5}^{4}D_{4}), \ \sigma_{5} 
  & A_{5,3}D_{4,3}A_{1,1}^{3} & 17 \\[1mm]
\text{\cite[\S3.2]{M}}
  & \Ni(E_{6}^{4}), \ \sigma_{6}  
  & E_{6,3}G_{2,1}^{3} & 32 \\[1mm]
\text{\cite[\S3.1]{M}}
  & \Lambda, \ \sigma_{7}  
  & \{0\} & 0
\end{array}
\end{equation*}
The purpose of this paper is to prove that 
the VOAs obtained in \cite{M} and \cite{SS} 
are all of the holomorphic non-lattice VOAs 
which we can obtain by this method. Namely, we prove 
that if we apply the $\BZ_{3}$-orbifold construction to 
a Niemeier lattice and its automorphism 
which is not conjugate to any of 
the $\sigma_{1},\,\dots,\,\sigma_{7}$ above, then 
the resulting holomorphic VOA is isomorphic to 
the lattice VOA associated to a Niemeier lattice 
(in fact, if two automorphisms are conjugate to each other, 
then so are the VOAs obtained by the $\BZ_{3}$-orbifold construction; 
see Remark~\ref{rem:SCE}\,(2) below).
For this purpose, we classify, up to conjugation, 
all automorphisms of order $3$ of Niemeier lattices 
to which we can apply the $\BZ_{3}$-orbifold construction.

Let us give an explanation of our result more precisely. 
Given a Niemeier lattice $L$ (i.e., a positive-definite 
even unimodular lattice of rank $24$) and 
its automorphism $\tau \in \Aut L$ of order $3$ 
such that the rank of 
the fixed-point lattice $L^{\tau}$ of $L$ under $\tau$ 
is divisible by $6$ (i.e., $\rank L^{\tau} \in 6\BZ$), 
we can obtain a holomorphic VOA of central charge $24$, 
denoted by $\ti{V}_{L}^{\tau}$ in this paper, by Miyamoto's 
$\BZ_{3}$-orbifold construction (see Theorem~\ref{thm:M}); 
this VOA is a $\BZ_{3}$-graded, simple current extension of 
the fixed-point subVOA $V_{L}^{\tau}$ of the lattice 
VOA $V_{L}$ associated to $L$, under the VOA automorphism of order $3$ 
induced from $\tau \in \Aut L$, which we denote also 
by $\tau \in \Aut V_{L}$.

\begin{ithm}[Theorem~\ref{thm:main4}\,(1)] \label{ithm1}
If $\tau$ is contained in the Weyl group $G_{0}(L)$ for $L$
(see \S\ref{subsec:Niemeier}), 
then the VOA $\ti{V}_{L}^{\tau}$ is isomorphic to 
the lattice VOA associated to a Niemeier lattice. 
\end{ithm}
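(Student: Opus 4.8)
The plan is to reduce the assertion to the case of an \emph{inner} automorphism of $V_{L}$, and then to appeal to the fact that a cyclic orbifold of a lattice VOA by an inner automorphism is again a lattice VOA. Set $\Fg := (V_{L})_{1}$, the weight-one Lie algebra of $V_{L}$ (with bracket the $0$-th product); it is reductive with Cartan subalgebra $\Fh = \BC \otimes_{\BZ} L$ and root system the set $R$ of roots of $L$. Let $G \subseteq \Aut V_{L}$ denote the inner automorphism group generated by $\{\exp(a_{(0)}) : a \in \Fg\}$, a connected reductive algebraic group whose maximal torus is $T = \{\sigma_{h} : h \in \Fh\}$, where $\sigma_{h} := \exp(2\pi\sqrt{-1}\,h_{(0)})$ fixes $\Fh$ pointwise and scales $e^{\alpha}$ by $e^{2\pi\sqrt{-1}\pair{h}{\alpha}}$. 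Under the identification of $G_{0}(L)$ with the Weyl group $W(R) = N_{G}(T)/T$, each reflection $s_{\alpha}$ is realised on $V_{L}$ by conjugation with the standard $\mathfrak{sl}_{2}$-triple element attached to $\alpha$; consequently the standard lift $\hat{\tau} \in \Aut V_{L}$ of $\tau \in G_{0}(L)$ differs from an element of $N_{G}(T)$ only by a torus factor, and in particular $\hat{\tau} \in G$.

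Granting this, I would argue as follows. The automorphism $\hat{\tau}$ has order $3$, hence is a semisimple element of the connected reductive group $G$, and is therefore conjugate in $G$ to an element of the maximal torus $T$: there exists $g \in G$ with $g\,\hat{\tau}\,g^{-1} = \sigma_{h}$ for some $h \in \Fh$. Since conjugation by $g \in G \subseteq \Aut V_{L}$ is a VOA automorphism, $\tau$ is conjugate in $\Aut V_{L}$ to $\sigma_{h}$; as conjugation preserves the order, $\sigma_{h}$ has order $3$, and using $L^{*} = L$ this means $h \in \tfrac{1}{3}L \setminus L$.

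By Remark~\ref{rem:SCE}\,(2), automorphisms conjugate in $\Aut V_{L}$ give isomorphic VOAs under the $\BZ_{3}$-orbifold construction, so $\ti{V}_{L}^{\tau} \cong \ti{V}_{L}^{\sigma_{h}}$, and the latter is holomorphic of central charge $24$. It then remains to identify $\ti{V}_{L}^{\sigma_{h}}$ as a lattice VOA. Put $L_{0} := \{\alpha \in L : \pair{h}{\alpha} \in \BZ\}$, an index-$3$ sublattice of $L$, so that $V_{L}^{\sigma_{h}} = V_{L_{0}}$ and $V_{L}$ decomposes into the three $L_{0}$-cosets of $L$ according to the value of $\pair{h}{\cdot} \bmod \BZ$. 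The $\sigma_{h}^{j}$-twisted $V_{L}$-modules are, because $\sigma_{h}$ is inner, again $V_{L_{0}}$-modules of the form $V_{L_{0}+\beta}$ with $\beta \in \tfrac{1}{3} L$, and the simple-current extension defining $\ti{V}_{L}^{\sigma_{h}}$ glues these cosets into $V_{L'}$ for an even lattice $L'$ with $L_{0} \subseteq L' \subseteq L_{0}^{*}$. As $\ti{V}_{L}^{\sigma_{h}}$ is holomorphic of central charge $24$, the lattice $L'$ is positive-definite, even and unimodular of rank $24$, i.e.\ a Niemeier lattice, whence $\ti{V}_{L}^{\tau} \cong \ti{V}_{L}^{\sigma_{h}} = V_{L'}$.

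The main obstacle is the first step: one must verify carefully that the standard lift of a Weyl-group element of $L$ is genuinely an inner automorphism of $V_{L}$ lying in $G$ (so that the reduction to a torus element applies), together with the bookkeeping of signs and cocycles in the lift, which contribute only a torus factor. Here it is helpful that $3$ is odd, so no order-doubling of the lift occurs. The remaining points — the explicit coset $\beta$ and the evenness and unimodularity of $L'$ — are routine once holomorphy of the orbifold is invoked, since holomorphy already forces $L'$ to be even and unimodular.
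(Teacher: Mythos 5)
Your proposal is correct, and its first half coincides with the paper's argument: the paper likewise begins by showing that $\tau$, viewed in $\Aut V_{L}$, lies in the inner group $G=\bigl\langle \exp a_{0} \mid a \in (V_{L})_{1}\bigr\rangle$, citing \cite[Lemma~3.8]{Kac} and \cite[Lemma~2.5]{DN} for exactly the realization of Weyl reflections by $\mathfrak{sl}_{2}$-triples that you sketch (including the point that sign/cocycle discrepancies are torus factors, hence harmless). It then conjugates to a torus element just as you do, but packaged differently: in Lemma~\ref{L1} it invokes \cite[Proposition~8.1]{Kac} together with conjugacy of Cartan subalgebras of $(V_{L})_{1}$ under $G$ to replace $\tau$ by $\sigma_{h}=\exp h_{0}$, and concludes only that $V_{L}^{\tau} \cong V_{J}$ with $J=\bigl\{\beta \in L \mid \pair{v}{\beta} \in |\tau|\BZ\bigr\}$ --- your $L_{0}$. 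The two proofs genuinely diverge in the second half. You conjugate the entire orbifold to $\sigma_{h}$ and describe the twisted sectors explicitly as shifted lattice modules $V_{L_{0}+\beta}$, $\beta \in \tfrac{1}{3}L$. That is true, but it relies on the identification of twisted modules for inner automorphisms (Li's $\Delta$-operator construction), which is not among the paper's tools, it requires defining $\ti{V}_{L}^{\sigma_{h}}$ by transport of structure since Theorem~\ref{thm:M} is stated only for automorphisms coming from $\Aut L$, and you still need a closure argument to know the glued cosets form a lattice $L'$. The paper never touches the twisted sectors again: Lemma~\ref{L2} shows abstractly that \emph{any} simple current extension of $V_{J}$ is a lattice VOA, via Dong's classification of irreducible $V_{J}$-modules \cite{D}, the fusion rule $V_{\lambda+J}\boxtimes V_{\mu+J}\cong V_{\lambda+\mu+J}$ \cite[Corollary~12.10]{DL93} (which makes the support a subgroup $M/J$ of $J^{\ast}/J$), and the uniqueness of simple current extensions \cite[Proposition~5.3]{DM04b}; since $\ti{V}_{L}^{\tau}$ is by construction a simple current extension of $V_{L}^{\tau}$ (Remark~\ref{rem:SCE}\,(1)), this yields $\ti{V}_{L}^{\tau}\cong V_{M}$ directly. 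Both routes then finish identically: central charge $24$ forces rank $24$ and holomorphy forces unimodularity, so $M$ is Niemeier.

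Two small points to tighten if you pursue your route: treating $G$ as a connected reductive group in which a finite-order element is conjugate into the maximal torus deserves justification (the paper's appeal to \cite[Proposition~8.1]{Kac} plus Cartan conjugacy does the same job at the Lie algebra level); and evenness of $L'$ comes from the integer-weight truncations $V_{L}(\tau^{r})_{\BZ}$, not from holomorphy --- holomorphy gives only unimodularity. What your approach buys is a concrete picture of the resulting lattice, $L_{0}\subseteq L' \subseteq L_{0}^{\ast}$; what the paper's buys is economy, since the fusion/uniqueness lemma replaces both your twisted-sector identification and your gluing step.
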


Thus, for our purpose, we may assume 
that $\tau \notin G_{0}(L)$ (see \eqref{eq:tau}). 
For each $r=0$, $6$, $12$, $18$ (recall that $\rank L^{\tau} \in 6\BZ$), 
denote by $\CC_{r}$ the set of 
conjugacy classes in $\Aut L$ which contain 
elements $\tau \in \Aut L$ satisfying the conditions that 
$|\tau|=3$, $\rank L^\tau=r$, and $\tau \notin G_{0}(L)$. 

\begin{ithm}[Theorem~\ref{thm:mainS3}]  \label{ithm2}
If there exists $\tau \in \Aut L$ 
satisfying the conditions that 
$|\tau|=3$, $\rank L^\tau \in 6\BZ$, 
and $\tau \notin G_{0}(L)$, 
then the root lattice $Q$ of $L$ is isomorphic to one of the following:
\begin{equation*}
\bigl\{ 0 \bigr\} ,\ A_1^{24},\ A_2^{12},\ 
A_3^{8},\ D_4^6,\ A_5^4D_4,\ A_6^4,\ D_6^4,\ E_6^4.
\end{equation*}
For each of these $Q$'s and $r=0$, $6$, $12$, $18$, 
the cardinality $\# \CC_{r}$ of the set $\CC_{r}$ is 
given by the following table. 
\begin{equation*}
\begin{array}{|c||c|c|c|c|c|c|c|c|c|c|} \hline 
 Q & \{0\} &
 A_1^{24} & A_2^{12} & A_3^8 &
 D_4^6 & A_5^4 D_4 & A_6^4 & D_6^4 & E_6^4  \\ \hline\hline
 \#\CC_{0}
  & 1 & 0 & 0 & 0 & 1 & 0 & 0 & 0 & 0  \\ \hline
 \#\CC_{6} 
  & 1 & 0 & 1 & 0 & 2 & 1 & 0 & 0 & 1  \\ \hline
 \#\CC_{12}
  & 1 & 1 & 1 & 1 & 2 & 1 & 2 & 1 & 1  \\ \hline
 \#\CC_{18}
  & 0 & 0 & 0 & 0 & 0 & 0 & 0 & 0 & 0  \\ \hline
\end{array}
\end{equation*}
In particular, there exists no $\tau \in \Aut L$ 
satisfying the conditions that $|\tau|=3$, 
$\rank L^\tau=18$, and $\tau \notin G_{0}(L)$. Also, 
if $Q \ne \{0\}$, and $\rank L^{\tau} \in \bigl\{0,\,6\bigr\}$, 
then $\tau$ is conjugate to one of $\sigma_{1},\,\dots,\,\sigma_{6}$. 
\end{ithm}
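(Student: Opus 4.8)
The plan is to analyse $\tau$ through the exact sequence $1\to G_0(L)\to\Aut L\to \Aut L/G_0(L)\to 1$, where $G_0(L)$ is the Weyl group of the root system of $L$ and the quotient is the group of those permutations of the irreducible components of $Q$, combined with Dynkin-diagram automorphisms, that preserve the glue code of $L$. Since $|\tau|=3$ and $\tau\notin G_0(L)$, its image $\ol{\tau}$ has order $3$; as triality of $D_4$ is the only order-$3$ diagram automorphism, $\ol{\tau}$ must cyclically permute three mutually isomorphic components of $Q$ and/or act by triality on $D_4$-components. First I would delete from Niemeier's list every $Q$ having fewer than three isomorphic components and no $D_4$-factor. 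For the survivors the glue code is decisive: its automorphism group need not contain an order-$3$ permutation of the required cycle type. The clean example is $A_4^6$, where the component part of $\Aut L/G_0(L)$ acts on the six components as $\mathrm{PGL}_2(\BF_5)\cong S_5$ on $\mathbb{P}^1(\BF_5)$; because $3\nmid 5-1$, every order-$3$ element is fixed-point-free of cycle type $3^2$, which forces $\rank L^\tau=8\notin 6\BZ$ and removes $A_4^6$. Running this through all cases leaves exactly the nine root lattices in the statement.

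The next step is to pin down, for each candidate, the admissible order-$3$ cycle types. These are governed by the glue-code automorphism group: $M_{24}$ for $A_1^{24}$ (types $1^63^6$ and $3^8$), $M_{12}$ for $A_2^{12}$ (types $1^33^3$ and $3^4$), $\mathrm{GL}_3(\BF_2)$ for $A_3^8$ (type $1^23^2$), a single $3$-cycle fixing one component for $A_6^4,D_6^4,E_6^4$, the hexacode over $\BF_4$ for $D_4^6$, and a related glue code for $A_5^4D_4$. I would then run the rank bookkeeping for $r=\rank L^\tau$: because $\tau^3=\id$ the monodromy around a $3$-cycle is trivial, so a $3$-cycle of rank-$k$ components contributes exactly $k$ (its diagonal); a triality on a $D_4$ contributes $2$ or $0$ according as it is of $G_2$-type or fixed-point-free; and an inner order-$3$ action on a $\ol{\tau}$-fixed component $X$ contributes the fixed dimension of the relevant order-$3$ class of $W(X)$ (for instance $0$ for $A_2$, and $1$ or $3$ for $A_5$). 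Summing and imposing $r\in 6\BZ$ produces the admissible decorated types; already this shows that the cycle-type restrictions never leave enough trivially-acted fixed components to reach $r=18$, so $\#\CC_{18}=0$, and it disposes of $E_8^3,D_8^3,A_8^3$, for which the only order-$3$ type forces $r=8$.

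It then remains to pass from decorated types to $\Aut L$-conjugacy classes and to count them. Two order-$3$ automorphisms are conjugate in $\Aut L$ exactly when their images are conjugate in $\Aut L/G_0(L)$ and the lifts are related by $\ol{\tau}$-twisted $G_0(L)$-conjugacy; I would reduce the count of lifts of a fixed $\ol{\tau}$ to the action of the centralizer of $\ol{\tau}$ on the admissible decorations, using as a complete invariant the isometry class of the coinvariant lattice $L_\tau=(L^\tau)^{\perp}$ together with the induced $\tau$-action on the discriminant form $L^{\ast}/L$. Comparing the resulting counts with the table, and matching the fixed-point data of the classes with $Q\ne\{0\}$ and $r\in\{0,6\}$ against those of $\sigma_1,\dots,\sigma_6$, completes the identification; for $Q=\{0\}$ one instead reads the three relevant classes ($r=0,6,12$) off directly from the order-$3$ classes of $\Aut\Lambda$.

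The main obstacle is precisely this last enumeration: deciding when two lifts sharing the same $\ol{\tau}$ and the same $r$ coincide or split, as for the two $r=12$ classes of $A_6^4$ and the three classes (one with $r=0$, two with $r=6$) of $D_4^6$. Here neither $\ol{\tau}$ nor $r$ is a complete invariant, and the glue code and the Weyl-group twisting interact: for $D_4^6$ and $A_5^4D_4$, triality cyclically permutes the three nontrivial glue classes of each $D_4$, so the code couples it to the actions on the other factors and, in particular, can forbid an isolated triality. Resolving these couplings, computing the coinvariant lattices $L_\tau$ as conjugacy invariants, and verifying that each combinatorially admissible decoration is genuinely realized by an order-$3$ isometry preserving the glue, are the delicate points on which the exact entries of the table depend.
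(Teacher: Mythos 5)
Your first half reproduces the paper's proof of Theorem~\ref{thm:mainS3}\,(1) essentially verbatim: the image of $\tau$ in $\Sym \SC_{Q}$ has order $3$ (or $\tau$ involves a triality, the only order-$3$ diagram automorphism), which restricts $Q$ to root lattices with three isomorphic components or a $D_{4}$-factor; the rank count for a $3$-cycle of components (the paper's Lemma~\ref{lem:rank}, giving $\rank L^{\tau}=\rank R_{1}+\rank R_{4}^{\tau}$) kills $A_{8}^{3}$, $D_{8}^{3}$, $E_{8}^{3}$; and the $PGL_{2}(5)$-action forcing cycle type $3^{2}$, hence $\rank L^{\tau}=8$, kills $A_{4}^{6}$. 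This part, together with the rank bookkeeping that rules out $r=18$ and the reading of the Leech classes from the order-$3$ classes of $2.\Co_{1}$, is sound and matches the paper.

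The genuine gap is in the second half, which is where the actual content of the table lies. You correctly identify that the problem is counting lifts of a fixed $\ol{\tau}$ up to twisted $G_{0}(L)$-conjugacy, and you even flag the hard cases ($A_{6}^{4}$, $D_{4}^{6}$, $A_{5}^{4}D_{4}$) as unresolved --- but you supply no mechanism to resolve them. The paper's engine is Lemma~\ref{lem:conj}: if $\tau$ cyclically permutes $R_{1},R_{2},R_{3}$ and $w \in W(R_{1}\oplus R_{2}\oplus R_{3})$ is such that $w\tau$ has order $3$, then $w\tau$ is conjugate to $\tau$ (via the explicit element $u=w_{1}w_{2}w_{1}^{\tau^{2}}$); this is what normalizes the Weyl part on the permuted components to $1$ and drives every count in Propositions~\ref{prop:autoA5}--\ref{prop:autoD4b}, supplemented by Lemma~\ref{lem:fixedr} and Lemma~\ref{lem:D4} on the fixed components (in particular the splitting $\Con{\vp}{\Aut D_4}=\Con{\vp}{P}\sqcup\Con{\vp^{-1}}{P}$ and the fact that each $\Con{\vp^{\pm 1}}{P}$ is a single $W(D_{4})$-orbit), Lemma~\ref{lem:Z2} for lifting classes through $G_{1}(L)\cong\BZ_{2}$, the nonsplit hexacode extension $H(L)\cong 3.\FS_{6}$ for $D_{4}^{6}$ with its three order-$3$ classes of sizes $2$, $120$, $120$, and the glue-code argument forcing $\tau|_{D_4}\notin W(D_{4})$ for $A_{5}^{4}D_{4}$. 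Your proposed substitute --- taking the isometry class of the coinvariant lattice $L_{\tau}=(L^{\tau})^{\perp}$ with its induced discriminant action as a \emph{complete} conjugacy invariant --- is asserted without proof and is not true as stated: since $L$ is unimodular, $L^{\ast}/L$ is trivial (you presumably mean $L_{\tau}^{\ast}/L_{\tau}$), and even then conjugacy in $\Aut L$ requires compatible gluing data between $L^{\tau}$ and $L_{\tau}$ up to the images of the relevant stabilizers in the discriminant groups, not just the abstract class of $(L_{\tau},\tau)$. Until this completeness claim is proved or replaced by the centralizer computations, the entries $\#\CC_{6}(D_{4}^{6})=2$, $\#\CC_{12}(D_{4}^{6})=2$, $\#\CC_{12}(A_{6}^{4})=2$ (which splits only because $\FA_{4}$ has two classes of $3$-cycles remaining non-conjugate in $\Aut L$), and the identification with $\sigma_{1},\dots,\sigma_{6}$ remain unestablished.
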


In the case that $Q=\{0\}$, i.e., 
$L=\Lambda$ (the Leech lattice), we have the following. 

\begin{ithm}[Theorem~\ref{thm:main4}\,(2)] \label{ithm3}
Assume that $L=\Lambda$. If $\rank \Lambda^{\tau}=0$, then 
$\tau$ is conjugate to $\sigma_{7}$, and hence 
$(\ti{V}_{\Lambda}^{\tau})_{1}=\{0\}$. 
Otherwise, $\ti{V}_{\Lambda}^{\tau} \cong V_{\Lambda}$. 
\end{ithm}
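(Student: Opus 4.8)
The plan is to prove Theorem~\ref{ithm3} by leveraging Theorem~\ref{ithm2} together with Theorem~\ref{ithm1}. Since $L=\Lambda$ corresponds to the case $Q=\{0\}$, the table in Theorem~\ref{ithm2} tells us exactly how many conjugacy classes of relevant automorphisms occur: $\#\CC_0=1$, $\#\CC_6=1$, $\#\CC_{12}=1$, and $\#\CC_{18}=0$, all under the standing assumption $\tau\notin G_0(\Lambda)$. First I would dispose of the case $\rank\Lambda^{\tau}=0$. Here $\#\CC_0=1$ means there is a unique conjugacy class of order-$3$ automorphisms with trivial fixed-point lattice and $\tau\notin G_0(\Lambda)$; since $Q=\{0\}$ forces $G_0(\Lambda)=\{\id\}$, the condition $\tau\notin G_0(\Lambda)$ is automatic for any $\tau\neq\id$. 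A fixed-point-free order-$3$ automorphism of $\Lambda$ is exactly what $\sigma_7$ is (from the introductory table, applied to $\Lambda$), so the unique class must be the class of $\sigma_7$. Hence any such $\tau$ is conjugate to $\sigma_7$, and since conjugate automorphisms yield isomorphic orbifold VOAs (Remark~\ref{rem:SCE}\,(2)), we get $\ti{V}_{\Lambda}^{\tau}\cong\ti{V}_{\Lambda}^{\sigma_7}$, whose weight-one space is $\{0\}$ by the construction of $\sigma_7$ recalled in the introduction; thus $(\ti{V}_{\Lambda}^{\tau})_1=\{0\}$.

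For the remaining cases I would treat $\rank\Lambda^{\tau}\in\{6,12,18\}$ together, aiming to show $\ti{V}_{\Lambda}^{\tau}\cong V_{\Lambda}$. The case $\rank\Lambda^{\tau}=18$ is immediate from Theorem~\ref{ithm2}: the table gives $\#\CC_{18}=0$, so no such $\tau$ with $\tau\notin G_0(\Lambda)$ exists. The key observation unifying the surviving cases is that $G_0(\Lambda)=\{\id\}$ because $\Lambda$ has no roots, so the dichotomy $\tau\in G_0(\Lambda)$ versus $\tau\notin G_0(\Lambda)$ collapses. The sets $\CC_r$ by definition only count classes with $\tau\notin G_0(\Lambda)$; but the value $\#\CC_6=\#\CC_{12}=1$ counts precisely the classes of order-$3$ automorphisms with nonzero fixed-point rank, and for these we must invoke Theorem~\ref{ithm1} to conclude the orbifold VOA is a Niemeier lattice VOA, and then pin down which Niemeier lattice.

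The main step, and the one I expect to be the principal obstacle, is identifying the resulting lattice VOA as specifically $V_{\Lambda}$ rather than some other Niemeier lattice VOA. Theorem~\ref{ithm1} only asserts that when $\tau$ lies in the Weyl group $G_0(L)$ the orbifold VOA is \emph{some} Niemeier lattice VOA; but here $\tau\notin G_0(\Lambda)$, so I cannot apply it directly. Instead I would argue via the general structure theory: $\ti{V}_{\Lambda}^{\tau}$ is a holomorphic VOA of central charge $24$, and by Theorem~\ref{ithm2} (combined with the non-lattice cases being exhausted by $\sigma_1,\dots,\sigma_7$) the only way $\tau$ fails to be conjugate to one of the $\sigma_i$ is when $\ti{V}_{\Lambda}^{\tau}$ is a lattice VOA. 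Since the only Niemeier lattice whose root system is $\{0\}$ is $\Lambda$ itself, and the orbifold construction preserves the ambient rank-$24$ even unimodular structure while here producing no new roots in the relevant fixed-point-free-on-the-root-part configurations, the resulting lattice must be $\Lambda$. Concretely, I would compute the Lie algebra of $(\ti{V}_{\Lambda}^{\tau})_1$ for each of the two surviving conjugacy classes and verify it coincides with the weight-one algebra of $V_{\Lambda}$ (equivalently, that the corresponding holomorphic VOA has the Niemeier lattice $\Lambda$ as its associated lattice by Schellekens' classification referenced in the table); the delicate point is ensuring these two classes do not accidentally reproduce one of $\sigma_1,\dots,\sigma_6$, which is ruled out since those arise only for $Q\neq\{0\}$. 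Having matched the weight-one data and central charge, the holomorphicity together with the uniqueness of $V_{\Lambda}$ among central-charge-$24$ holomorphic VOAs with that weight-one Lie algebra yields $\ti{V}_{\Lambda}^{\tau}\cong V_{\Lambda}$.
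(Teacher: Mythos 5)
Your rank-$0$ case is correct and matches the paper: since $\Lambda$ has no roots, $G_{0}(\Lambda)$ is trivial, so Proposition~\ref{prop:Leech} (the entry $\#\CC_{0}=1$) gives a single conjugacy class of fixed-point-free order-$3$ automorphisms; it contains $\sigma_{7}$, and Remark~\ref{rem:SCE}\,(2) together with \cite[\S3.1]{M} yields $(\ti{V}_{\Lambda}^{\tau})_{1}=\{0\}$. The case $\rank\Lambda^{\tau}=18$ is also correctly excluded. But your argument for $\rank\Lambda^{\tau}\in\{6,\,12\}$ has a genuine gap, in two respects. First, it is circular: you deduce that $\ti{V}_{\Lambda}^{\tau}$ is a lattice VOA from ``the non-lattice cases being exhausted by $\sigma_{1},\dots,\sigma_{7}$,'' but that exhaustion is the conclusion of Theorem~\ref{thm:main4} as a whole, of which the statement you are proving is a constituent part; it cannot be assumed. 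Second, having rightly noted that Theorem~\ref{ithm1} is inapplicable because $\tau\notin G_{0}(\Lambda)=\{1\}$, you substitute the claim that the construction produces ``no new roots,'' which is unproved and not even superficially true at the level of weight-one spaces: by \eqref{eq:topwt} the top weight of $V_{\Lambda}(\tau^{r})$ is $\rho=1$ when $\rank\Lambda^{\tau}=6$ and $\rho=2/3$ when $\rank\Lambda^{\tau}=12$, so the twisted sectors contribute nonzero subspaces $\Fg_{1},\,\Fg_{2}$ to $(\ti{V}_{\Lambda}^{\tau})_{1}$ in both cases. Your sentence ``I would compute the Lie algebra \dots and verify it coincides'' is a statement of intent; that computation is precisely the substance of the proof and is missing from your proposal.

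What the paper does instead is show directly that $\Fg:=(\ti{V}_{\Lambda}^{\tau})_{1}$ is abelian, after which \cite[Theorem~3]{DM04} (for a holomorphic VOA of central charge $24$, an abelian weight-one Lie algebra of dimension $24$ forces $V\cong V_{\Lambda}$) finishes the argument. Since $Q=\{0\}$, the untwisted part $\Fg_{0}=(V_{\Lambda}^{\tau})_{1}$ equals $\FH_{0}$ and is abelian of dimension $\rank\Lambda^{\tau}$. If $\rank\Lambda^{\tau}=6$, then $\Fg_{1}$ and $\Fg_{2}$ are the top weight subspaces of the twisted modules, so Lemma~\ref{lem:Ysh} gives $[\Fg_{0},\,\Fg_{1}]=[\Fg_{0},\,\Fg_{2}]=\{0\}$, making $\Fg_{0}$ a nontrivial abelian ideal and ruling out the semisimple alternative of \cite[Theorem~3]{DM04}. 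If $\rank\Lambda^{\tau}=12$, one supposes $\Fg$ semisimple, takes the centralizer $\Fz$ of $\Fg_{0}$ (a Cartan subalgebra by \cite[Lemma~8.1\,b)]{Kac}), notes $\FH_{1}\oplus\FH_{2}\subset\Fz$ by Lemma~\ref{lem:Ysh}, and then uses the $\BZ_{3}$-grading $\Fg=\Fg_{0}\oplus\Fg_{1}\oplus\Fg_{2}$ to show $[\FH_{1}\oplus\FH_{2},\,\Fg^{\alpha}]=\{0\}$ for every root $\alpha$, so that $\FH_{1}\oplus\FH_{2}$ is a nontrivial abelian ideal --- a contradiction. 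To repair your proposal you would need to supply this (or an equivalent) argument; appealing to Schellekens' list or to uniqueness of $V_{\Lambda}$ given its weight-one algebra presupposes that you already know this algebra is abelian of dimension $24$, which is the very point at issue.
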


So, let us consider the case that $L \ne \Lambda$. 
If $\rank L^{\tau} =0$ or $6$, then $\tau$ is conjugate to 
one of $\sigma_{1},\,\dots,\,\sigma_{6}$, 
and hence $\ti{V}_{L}^{\tau}$ is isomorphic to one of 
the holomorphic (non-lattice) VOAs obtained in \cite{M} and 
\cite{SS} (see Theorem~\ref{thm:main4}\,(3a)). 
In the case that $\rank L^{\tau} = 12$, we have the following. 

\begin{ithm}[Theorem~\ref{thm:main4}\,(3b)] \label{ithm4}
Let $\tau \in \Aut L$ be such that $|\tau|=3$, 
$\rank L^\tau=12$, and $\tau \notin G_{0}(L)$. 
Then, $\ti{V}_{L}^{\tau} \cong V_{L}$. 
\end{ithm}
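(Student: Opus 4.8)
The plan is to show that when $\rank L^{\tau}=12$ and $\tau\notin G_0(L)$, the orbifold VOA $\ti{V}_L^{\tau}$ is again a lattice VOA, and in fact isomorphic to $V_L$ itself. The natural strategy exploits the general principle that a holomorphic VOA of central charge $24$ is determined (under Schellekens' classification, invoked via \cite{Sch} in the table) by the Lie algebra structure of its weight one subspace $(\ti{V}_L^{\tau})_1$. So the heart of the argument is to compute this weight one Lie algebra and to verify it coincides with that of a Niemeier lattice VOA, then to identify which one.

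First I would use Theorem~\ref{ithm2}: since $\rank L^{\tau}=12$ and $\tau\notin G_0(L)$, the root lattice $Q$ of $L$ is one of the explicitly listed types, and the table records $\#\CC_{12}$ for each. This reduces the problem to a finite, completely enumerated list of conjugacy classes of such $\tau$. For each representative $\tau$ I would analyze the fixed-point subVOA $V_L^{\tau}$ and its $\BZ_3$-graded simple current extension $\ti{V}_L^{\tau}$, following the orbifold-construction formalism of \cite{M} already set up (Theorem~\ref{thm:M}). The key quantitative input is the dimension and structure of the weight one space: $(\ti{V}_L^{\tau})_1$ decomposes as the $\tau$-fixed part $(V_L)_1^{\tau}$ together with contributions from the twisted modules, and one computes its dimension via the character/trace formulas for the twisted sectors. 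The decisive point is that when $\rank L^{\tau}=12$, these contributions assemble into a reductive Lie algebra of dimension $24+\dim$ matching exactly the weight one algebra of some Niemeier lattice, with no genuinely new holomorphic VOA appearing.

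Next I would confirm that the resulting weight one Lie algebra is of the form associated to a Niemeier lattice, i.e.\ abelian rank $24$ plus root contributions giving one of Schellekens' lattice entries. Since a holomorphic $c=24$ VOA whose weight one algebra matches a Niemeier lattice VOA is isomorphic to that lattice VOA (this is the identification mechanism underlying the ``No.\ in \cite[Table~1]{Sch}'' column, with the lattice cases being those not among $\sigma_1,\dots,\sigma_7$), establishing the Lie-algebra match suffices to conclude $\ti{V}_L^{\tau}\cong V_M$ for a Niemeier lattice $M$. To pin down that $M\cong L$ specifically, I would track the lattice data through the construction: the weight one lattice $\Img$ of $\ti{V}_L^{\tau}$ recovers a Niemeier lattice whose root system matches $Q$, and since the $12$-dimensional fixed sublattice together with the twisted data reconstructs precisely $L$ (rather than a distinct lattice of the same genus), I would verify $M\cong L$ either by a direct lattice computation of the glue code or by uniqueness within the relevant genus.

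The main obstacle I anticipate is the twisted-sector computation: correctly determining $(\ti{V}_L^{\tau})_1$ requires the conformal weights and graded dimensions of the $\tau$-twisted and $\tau^2$-twisted $V_L$-modules, and showing that these twisted contributions produce no extra weight one vectors beyond what a lattice VOA carries. This is delicate because the twisted modules depend sensitively on the precise action of $\tau$ on $L$ and on the discriminant form, and a miscount would spuriously suggest a non-lattice VOA. A clean way to handle this is to note that $\rank L^{\tau}=12$ forces the twisted conformal weights to lie strictly above $1$, so the twisted sectors contribute nothing in weight one and $(\ti{V}_L^{\tau})_1=(V_L^{\tau})_1$ comes entirely from the untwisted fixed-point part; this reduces the whole problem to the classical lattice computation $(V_L^{\tau})_1=\Fh^{\tau}\oplus\bigl(\text{$\tau$-fixed root vectors}\bigr)$, whose rank $24$ abelian structure then forces the lattice-VOA conclusion. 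I would organize the proof around establishing this twisted-weight bound first, as it is the linchpin that collapses the argument.
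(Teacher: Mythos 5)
Your linchpin claim is false, and it fails precisely in the case at hand. By the top-weight formula $\rho=\tfrac{1}{18}(\rank L-\rank L^{\tau})$ in \eqref{eq:topwt}, $\rank L^{\tau}=12$ gives $\rho=2/3$; the twisted conformal weights lie strictly above $1$ only when $\rank L^{\tau}=0$ (where $\rho=4/3$). Since the weights of $V_{L}(\tau^{r})$ lie in $\rho+\tfrac{1}{3}\BZ_{\ge 0}$, the twisted sectors contribute nontrivially in weight one: the spaces $\FH_{r}=\bigl\{h(-1/3)1\otimes t \mid h \in \Fh_{(-r)},\, t \in T(\tau^{r})\bigr\}$ sit in weight $\rho+1/3=1$, with $\dim\Fh_{(1)}=\dim\Fh_{(2)}=6$. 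Far from being negligible, these vectors are the engine of the paper's proof: the untwisted part $(V_{L}^{\tau})_{1}$ is a \emph{semisimple} Lie algebra of rank only $12$ (it is not ``abelian of rank $24$'' as you assert --- $\Fh_{(0)}$ is $12$-dimensional and the $\tau$-fixed root vectors make the algebra nonabelian), and it is exactly $\FH_{1}\oplus\FH_{2}$, shown via Lemma~\ref{lem:Ysh} to centralize $\FH_{0}$, that pushes the rank of a Cartan subalgebra of $(\ti{V}_{L}^{\tau})_{1}$ up to $24$. Only then can one invoke \cite[Theorem~3]{DM04} --- not Schellekens' classification, which at this point provides a list of candidate Lie algebras but no uniqueness mechanism --- to conclude that $\ti{V}_{L}^{\tau}$ is a Niemeier lattice VOA. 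Under your (false) vanishing claim the weight-one algebra would have rank $12$, and no lattice-VOA conclusion would follow at all.

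The identification $M\cong L$ is also essentially missing from your sketch: ``uniqueness within the relevant genus'' is vacuous, since all $24$ Niemeier lattices lie in a single genus, and ``the weight one lattice recovers a Niemeier lattice whose root system matches $Q$'' is precisely what has to be proved. The paper does this by transporting the $\BZ_{3}$-grading automorphism $\phi$ to $V_{M}$, decomposing $(V_{M})_{1}$ into simple ideals, and comparing \emph{levels}: ideals cyclically permuted by $\phi$ contribute level-$3$ simple ideals to $(V_{M}^{\phi})_{1}$, while $\phi$-fixed ideals contribute level-$1$ ideals; matching this against the corresponding decomposition of $(V_{L}^{\tau})_{1}$ forces $k=r$ and shows the root lattice of $M$ contains $\bigoplus_{m=1}^{3k}Q_{m}$, which determines $M$ uniquely from the Niemeier table. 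The case $Q=D_{4}^{6}$, $\tau=\omega^{(6)}$ (where $k=0$ and $(V_{L}^{\tau})_{1}\cong\Fg(G_{2})^{\oplus 6}$) needs a separate argument via Kac's description of finite-order automorphisms: a simply-laced simple ideal whose order-$3$ fixed subalgebra is $\Fg(G_{2})$ must carry an order-$3$ diagram automorphism, hence be of type $D_{4}$. None of this case analysis can be bypassed by the genus remark.
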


This paper is organized as follows: 
In \S\ref{sec:review-Ni}, we review Niemeier lattices and 
their automorphism groups, and then the definitions of 
the automorphisms $\sigma_{1},\,\sigma_{2},\,\dots,\,\sigma_{7}$ 
introduced in \cite{SS} and \cite{M}. 
In \S\ref{sec:Lsigma6}, we prove 
Theorem~\ref{ithm2} above in Theorem~\ref{thm:mainS3}, 
which classifies, up to conjugation, 
all automorphisms of order $3$ 
of Niemeier lattices to which we can apply 
Miyamoto's $\BZ_{3}$-orbifold construction. 
In \S\ref{sec:review}, we briefly review lattice VOAs, 
twisted modules over lattice VOAs, and 
Miyamoto's $\BZ_{3}$-orbifold construction. 
In \S\ref{sec:LieStructure}, 
we prove Theorems~\ref{ithm1}, \ref{ithm3}, 
and \ref{ithm4} above in Theorem~\ref{thm:main4}; 
proofs for parts (1), (2) and (3) of Theorem~\ref{thm:main4}
are given in \S\ref{subsec:weyl}, 
\S\ref{subsec:leech}, and \S\ref{subsec:Nie3}, 
respectively.

\paragraph{Acknowledgments.}
The authors thank Professor Masahiko Miyamoto 
for fruitful comments and discussions.
Also, the authors thank the referee for many variable comments. 

\paragraph{List of Notation.} \quad

\svsp

\begin{center}
{\small
\begin{tabular}{p{25mm}p{130mm}}
$\BF_{n}$ & the field of $n$ elements. \\
$\Aut X$ & the automorphism group of $X$, 
where $X$ is a lattice, a Lie algebra, or a VOA. \\
$\Sym X$ & the symmetric group on a set $X$. \\
$W(Q)$ & the Weyl group of a root lattice $Q$. \\
$\BZ_{n}=\BZ/n\BZ$ & the cyclic group of order $n$. \\
$\FS_{n}$ & the symmetric group of degree $n$. \\
$\FA_{n}$ & the alternating group of degree $n$. \\
$|g|$ & the order of an element $g$ in a group. \\
$X \sbg Y$ & $X$ is a subgroup of $Y$. \\
$X \lhd Y$ & $X$ is a normal subgroup of $Y$. \\
$X:Y$ & a split extension of a group $Y$ by a group $X$. \\
$X.Y$ & an extension of a group $Y$ by a group $X$. \\
$\Con{x}{G}$ & the conjugacy class containing $x$ in a group $G$. \\
$L^{\ast}$ & the dual lattice of a lattice $L$. \\
$L^{\tau}$ & the fixed-point sublattice of a lattice $L$ under $\tau \in \Aut L$. \\
$\Ni(Q)$ & the Niemeier lattice whose root lattice is $Q$. \\
$\SC_Q$ & 
the set of indecomposable components of a root lattice $Q$. \\
$\Lambda$ & the Leech lattice. \\
$V_{L}$ & the lattice vertex operator algebra 
associated to a lattice $L$. \\
$\Fg(X)$ & the semisimple Lie algebra of type $X$. 
\end{tabular}
} 
\end{center}
%
%
\section{Review.}
\label{sec:review-Ni}
In this section, we review Niemeier lattices and their automorphism groups in 
\S\ref{subsec:Niemeier}, and then the definition of 
the automorphisms $\sigma_{1},\,\sigma_{2},\,\dots,\,\sigma_{7}$ 
introduced in \cite{SS} and \cite{M}. 
%
%
\subsection{Niemeier lattices and their automorphism groups.}
\label{subsec:Niemeier}

A Niemeier lattice is, by definition, 
a positive-definite even unimodular lattice of rank $24$;
for the classification of Niemeier lattices, 
see \cite[Table 16.1]{CS}. 
For a Niemeier lattice $L$ with $\BZ$-bilinear form $\pair{\cdot\,}{\cdot}$, 
we define its root lattice $Q$ to be the sublattice generated by 
$\Delta:=\bigl\{ \alpha \in L \mid \pair{\alpha}{\alpha}=2 \bigr\}$. 
Recall that every Niemeier lattice is uniquely (up to an isomorphism) 
determined by its root lattice; denote by $\Ni(Q)$ the Niemeier lattice 
whose root lattice is $Q$. If $Q=\{0\}$, then $\Ni(Q)$ is isomorphic 
to the Leech lattice $\Lambda$. 
Assume that $Q \ne \{0\}$. Then it is known that $\rank Q=24$, and 
$\Ni(Q)$ can be realized as a sublattice of the dual lattice $Q^{\ast}$ of $Q$. 
Thus, $\Ni(Q)/Q$ is a finite abelian group, 
which we call the glue code or the (set of) glue vectors.

Now, let $L=\Ni(Q)$ be a Niemeier lattice such that $Q \ne \{0\}$. 
First we review the group structure of 
the automorphism group $\Aut Q$ of the root lattice $Q$. 
Let $Q=\bigoplus_{m=1}^n Q_m$ be 
the decomposition of $Q$ into its indecomposable components; 
we know from \cite[Corollary~5.10\,b)]{Kac} that 
for each $1 \le m \le n$, 
\begin{equation*}
\Aut Q_{m}=W(Q_{m}) : G_{1}(Q_{m}),
\end{equation*}
where $W(Q_{m}) \sbg \Aut Q_{m}$ is 
the Weyl group of $Q_{m}$, and $G_{1}(Q_{m})$ 
is the subgroup of $\Aut Q_{m}$ consisting of 
all Dynkin diagram automorphisms of $Q_{m}$ 
(with respect to a fixed set $\Pi_{m}$ of simple roots of $Q_{m}$). 
Here we set
%
%
\begin{equation} \label{eq:G01Q}
G_{0}(Q) : = \prod _{m=1} ^n W(Q_m), \qquad 
G_{1}(Q) : = \prod _{m=1} ^n G_{1}(Q_m),
\end{equation}
%
%
\begin{equation} \label{eq:KQ}
K(Q):=\bigl\{\tau \in \Aut Q \mid \tau(Q_{m})=Q_{m} \ 
\text{for all $1 \le m \le n$}\bigr\} \sbg \Aut Q; 
\end{equation}
we call $G_{0}(Q)$ the Weyl group of $Q$. 
Remark that $G_{0}(Q) \lhd K(Q)$, 
$G_{1}(Q) \sbg K(Q)$, and 
%
%
\begin{equation} \label{eq:KQ-G01}
K(Q)=\prod_{m=1}^{n} \Aut Q_{m}
    =G_{0}(Q):G_{1}(Q).
\end{equation}
For each $1 \le i < j \le n$ such that $Q_{i} \cong Q_{j}$, 
we have the following automorphism $t_{ij} \in \Aut Q$ 
of $Q=\bigoplus_{m=1}^{n} Q_{m}$ 
(the ``transposition'' of the $i$-th entry 
and the $j$-th entry):
\begin{equation*}
(x_{1},\dots,\,x_{i},\,\dots,\,x_{j},\,\dots,\,x_{n}) 
\mapsto 
(x_{1},\dots,\,x_{j},\,\dots,\,x_{i},\,\dots,\,x_{n}).
\end{equation*}
We set
\begin{equation*}
G_{2}(Q):=\bigl\langle 
t_{ij} \mid 1 \le i < j \le n \text{ such that } 
Q_{i} \cong Q_{j} \bigr\rangle \sbg \Aut Q,
\end{equation*}
which is the subgroup of $\Aut Q$ consisting of 
all ``permutations'' of entries of $Q=\bigoplus_{m=1}^{n} Q_{m}$. 
Then it can be easily verified that 
%
%
\begin{equation} \label{eq:AutQ}
\Aut Q = K(Q):G_{2}(Q) = G_{0}(Q):G_{1}(Q):G_{2}(Q).
\end{equation}
We see that $G_{1}(Q):G_{2}(Q)$ is the subgroup of $\Aut Q$ 
consisting of all elements in $\Aut Q$ that preserves 
the set $\Pi:=\bigsqcup_{m=1}^{n} \Pi_{m}$ of simple roots of $Q$. 

%
\begin{rem} \label{rem:G2}
Notice that $\Aut Q$ naturally acts on 
the set $\SC_Q := \bigl\{ Q_1 , \ldots , Q_n \bigr\}$ 
of indecomposable components of $Q$. 
Hence we have a group homomorphism 
$\Phi : \Aut Q \rightarrow \Sym \SC_Q$,
where $\Sym \SC_{Q}$ is the symmetric group 
on the set $\SC_{Q}$. It is obvious that 
\begin{equation*}
G_{2}(Q) \cong \Img \Phi, \qquad 
\Ker \Phi = K(Q)=G_{0}(Q):G_{1}(Q).
\end{equation*}
\end{rem}

Next, let us review from \cite[\S3 in Chapter 4]{CS}
the group structure of the automorphism group $\Aut L$ of $L$. 
Notice that $\Aut L \sbg \Aut Q$. 
Indeed, since the spanning set 
$\Delta=\bigl\{ \alpha \in L \mid \pair{\alpha}{\alpha} =2 \bigr\}$ 
of $Q$ is stable under the action of $\Aut L$, 
it follows immediately that $Q$ is stable under $\Aut L$. 
Thus we get the natural group homomorphism 
$\Aut L \rightarrow \Aut Q$ defined 
by the restriction $\tau \mapsto \tau|_{Q}$ 
for $\tau \in \Aut L$. 
Since $L \otimes_{\BZ} \BR = Q \otimes _{\BZ} \BR$, 
we see that this homomorphism is injective. 

It is well-known (and easily verified) that the Weyl group 
$G_{0}(Q)=\prod _{m=1} ^n W(Q_m)$ is contained in $\Aut L$. 
Set
\begin{equation*}
G_{0}(L):=G_{0}(Q) \lhd \Aut L, 
\end{equation*}
\begin{equation*}
G_{1}(L):=\Aut L \cap G_{1}(Q)
 \sbg \Aut L;
\end{equation*}
note that $G_{0}(L)$ and $G_{1}(L)$ are contained in 
$K(Q) \cap \Aut L$ (see \eqref{eq:KQ} and \eqref{eq:KQ-G01}), 
and 
%
%
\begin{equation} \label{eq:KQ-L}
K(Q) \cap \Aut L = G_{0}(L) : G_{1}(L).
\end{equation}
Furthermore we can easily show that 
%
%
\begin{equation} \label{eq:HL}
\begin{split}
& \Aut L = G_{0}(L):H(L), \quad \text{where} \\
& H(L):=\Aut L \cap \bigl(G_{1}(Q):G_{2}(Q)\bigr) \sbg \Aut L;
\end{split}
\end{equation}
for each $\tau \in H(L)$, 
there exist unique $\tau_{1} \in G_{1}(Q)$ and 
$\tau_{2} \in G_{2}(Q)$ such that $\tau=\tau_{1}\tau_{2}$, 
but we should remark that $\tau_{1}$ and $\tau_{2}$ 
are not necessarily contained in $\Aut L$. Define 
%
%
\begin{equation} \label{eq:G2L}
G_{2}(L) : = 
 \bigl\{ \tau \in G_{2}(Q) \mid 
 \text{$\tau_1 \tau \in H(L)$ for some $\tau_1 \in G_{1}(Q)$} \bigr\}. 
\end{equation}
Because $G_{1}(Q) \lhd G_{1}(Q):G_{2}(Q)$, 
we deduce that $G_{2}(L)$ is a subgroup of $G_{2}(Q)$. 
In addition, if $\tau=\tau_{1}\tau_{2} \in H(L)$ 
with $\tau_{1} \in G_{1}(Q)$ and $\tau_{2} \in G_{2}(Q)$, 
then it is obvious by the definition that $\tau_{2} \in G_{2}(L)$. 
Thus we obtain a map $\pi_{2}:H(L) \rightarrow G_{2}(L)$, 
$\tau \mapsto \tau_{2}$, which is obviously surjective. 
Also, it can be verified that $\pi_{2}$ is 
a group homomorphism, with $G_{1}(L)=\Aut L \cap G_{1}(Q)$ 
as the kernel. Thus we obtain the following exact sequence:
%
%
\begin{equation} \label{eq:exact}
1 \longrightarrow G_{1}(L) 
\stackrel{\subset}{\longrightarrow} H(L) 
\stackrel{\pi_{2}}{\longrightarrow} G_{2}(L) 
\longrightarrow 1.
\end{equation}
%
%
\begin{rem} \label{rem:G2L}
With the notation in Remark~\ref{rem:G2}, 
we have $G_{2}(L) \cong \Phi ( \Aut L ) \sbg \Sym \SC_{Q}$. 
\end{rem}

\begin{rem}
Because $\Aut L \sbg \Aut Q$, we have the induced action of $\Aut L$ 
on the glue code $L/Q$. Then, $\tau \in \Aut L$ acts on $L/Q$
as the identity map if and only if $\tau \in G_{0}(L)$. 
Hence, by \eqref{eq:HL}, $H(L)$ is identical to 
the subgroup of $G_{1}(Q):G_{2}(Q)$ consisting of the elements 
that preserves the glue code $L/Q$.
\end{rem}

We know from \cite[\S1 in Chapter~16 and \S4 in Chapter~18]{CS} 
the group structures of $G_{1}(L)$ and $G_{2}(L)$ 
for each Niemeier lattice $L$ whose root lattice $Q$ is 
one of those in \eqref{eq:Q} below, except for $Q=\{0\}$:  
%
%
\begin{equation} \label{table:G1G2}
\begin{array}{|c||c|c|c|c|c|c|c|c|c|c|} \hline 
 Q & A_1^{24} & A_2^{12} & A_3^8 & D_4^6 
   & A_5^4D_4 & A_6^4 & D_6^4 & E_6^4 \\ \hline \hline
 G_1 (L) & 1 & \BZ_2 & \BZ_2 & \BZ_3 & 
 \BZ_2 & \BZ_2 & 1 & \BZ_2 \\\hline
 G_2 (L) & M_{24} & M_{12} & AGL_{3}(2) & \FS_6 & 
 \FS_4 & \FA_4 & \FS_4 & \FS_4 \\\hline
\end{array}
\end{equation}
Here, $M_{24}$ and $M_{12}$ are the Mathieu groups of degree $24$ and $12$, respectively, 
and $AGL_{3}(2)$ is the affine general linear group of degree $3$ over $\BF_{2}$. 

If $Q=\{0\}$, or equivalently, $L \cong \Lambda$ (the Leech lattice), 
then $\Aut \Lambda$ is isomorphic to a (unique) nontrivial extension of 
the largest Conway's sporadic simple group $\Co_{1}$ by $\BZ_{2}$.
Let $\sigma_{7} \in \Aut \Lambda$ be a fixed-point-free automorphism of order $3$; 
we will show at the beginning of \S\ref{subsec:main32} below that 
such an automorphism of $\Lambda$ exists uniquely, up to conjugation. 

%
\subsection{Root lattices.}
\label{subsec:root}

In this subsection, we fix the notation for some root lattices, 
which are needed to define the automorphisms 
$\sigma_{1},\,\sigma_{2},\,\dots,\,\sigma_{6}$ 
introduced in \cite{SS} and \cite{M}. 
Also, we introduce three automorphisms $\omega$, $\vp$, and $\psi$ 
of the root lattice $D_{4}$ for later use. 
%
\paragraph{Root lattice $A_{n}$.}
%
Following \cite[Chapter 4, \S6.1]{CS}, we set
\begin{equation*}
A_{n}:=\bigl\{ 
(x_{0},\,x_{1},\,\dots,\,x_{n}) \in \BZ^{n+1} 
 \mid x_{0}+x_{1}+ \cdots + x_{n} =0
\bigr\},
\end{equation*}
\begin{equation*}
[\ell] : = 
\frac{1}{n+1}(\ell,\,\dots,\,\ell,\,
\underbrace{\ell-n-1,\,\dots,\,\,\ell-n-1}_{%
 \text{$\ell$ times}}) \in A_{n}^{\ast}
\qquad \text{for $\ell=0,\,1,\,\dots,\,n$}. 
\end{equation*}
Then, $A_{n}^{\ast}/A_{n}=\bigl\{\ol{[\ell]} : = [\ell]+A_{n} \mid 
\ell=0,\,1,\,\dots,\,n\bigr\} \cong \BZ_{n+1}$. 
Recall that $\Aut A_{1} \cong \BZ_{2}$, and 
$\Aut A_{n} \cong \BZ_{2} \times \FS_{n+1}$ for $n \ge 2$.

\paragraph{Root lattice $D_{n}$.}
%
Following \cite[Chapter~4, \S7.1]{CS}, 
we set
\begin{equation*}
D_{n}:=\bigl\{(x_{1},\,x_{2},\,\dots,\,x_{n}) \in \BZ^{n} \mid 
x_{1}+x_{2}+\cdots+x_{n} \in 2\BZ\bigr\}, 
\end{equation*}
\begin{align*}
& [0]:=(0,\,0,\,\dots,\,0) \in D_{n}^{\ast}, & 
& [1]:=(1/2,\,1/2,\,\dots,\,1/2) \in D_{n}^{\ast}, \\
& [2]:=(0,\,\dots,\,0,\,1) \in D_{n}^{\ast}, & 
& [3]:=(1/2,\,\dots,\,1/2,\,-1/2) \in D_{n}^{\ast}. 
\end{align*}
Then we have $D_{n}^{\ast}/D_{n}=
\bigl\{\ol{[\ell]}:=[\ell]+D_{n} \mid \ell=0,\,1,\,2,\,3\bigr\}$; 
in particular, $D_{4}^{\ast}/D_{4} \cong \BZ_{2} \times \BZ_{2}$. 
Recall that $\Aut D_{n} \cong \BZ_{2}^{n} : \FS_{n}$ for $n \ge 5$, and 
$\Aut D_{4} \cong \BZ_{2}^{3} : \FS_4 : \FS_{3}$. 

In the case of $D_{4}$, 
\begin{align*}
 & \alpha_{1} := (1,\,-1,\,0,\,0), &
 & \alpha_{2} := (0,\,1,\,-1,\,0), \\
 & \alpha_{3} := (0,\,0,\,1,\,-1), &
 & \alpha_{4} := (0,\,0,\,1,\,1)
\end{align*}
give a set of simple roots such that
$\pair{\alpha_{2}}{\alpha_{i}}=-1$ for $i=1,\,3,\,4$, and 
$\pair{\alpha_{k}}{\alpha_{l}}=0$ for $k,\,l \in \{1,\,3,\,4\}$, $k \ne l$. 
Let $\omega$ be the Dynkin diagram automorphism of $D_{4}$ of order $3$ such that 
$\omega(\alpha_{1})=\alpha_{3}$, 
$\omega(\alpha_{3})=\alpha_{4}$, 
$\omega(\alpha_{4})=\alpha_{1}$, and 
$\omega(\alpha_{2})=\alpha_{2}$; 
we can easily check that 
\begin{equation} \label{eq:omega-glue}
\omega(\ol{[0]})=\ol{[0]}, \quad
\omega(\ol{[1]})=\ol{[2]}, \quad
\omega(\ol{[2]})=\ol{[3]}, \quad
\omega(\ol{[3]})=\ol{[1]}.
\end{equation}
Also, we define a linear automorphism $\vp$ of 
$D_{4} \otimes_{\BZ} \BR$ by: 
\begin{align*}
& (1,0,0,0) \mapsto \frac{1}{2}(-1,\,1,\,1,\,1), &
& (0,1,0,0) \mapsto \frac{1}{2}(-1,\,-1,\,1,\,-1), \\[1mm]
& (0,0,1,0) \mapsto \frac{1}{2}(-1,\,-1,\,-1,\,1), & 
& (0,0,0,1) \mapsto \frac{1}{2}(-1,\,1,\,-1,\,-1);
\end{align*}
we see that the restriction of $\vp$ to $D_{4}$ is 
a lattice automorphism of order $3$ of $D_{4}$ 
which is fixed point-free on $D_{4}$. Observe that 
\begin{equation} \label{eq:vpi-glue}
\vp(\ol{[0]})=\ol{[0]}, \quad
\vp(\ol{[1]})=\ol{[2]}, \quad
\vp(\ol{[2]})=\ol{[3]}, \quad
\vp(\ol{[3]})=\ol{[1]}.
\end{equation}
Hence, by \eqref{eq:omega-glue} and \eqref{eq:vpi-glue}, 
the action of $\vp$ on $D_{4}^{\ast}/D_{4}$ is 
identical to that of $\omega$, 
which implies that $\vp \in W(D_{4})\omega$. 
Also, define $\psi:=r_{1}r_{2} \in W(D_{4})$, 
where $r_{i}$ denotes the simple reflection with respect to 
the simple root $\alpha_{i}$ for $i=1,\,2,\,3,\,4$; note that 
$\psi$ is of order $3$. We have
%
%
\begin{equation} \label{eq:D4-fixed}
\rank D_4^\omega=2, \quad 
\rank D_4^\vp=0, \quad 
\rank D_4^\psi=2.
\end{equation}

\paragraph{Root lattice $E_{6}$.}
%
Let $\bigl\{\alpha_i\mid 1\le i\le 6\bigr\}$ be 
the set of simple roots for the root lattice 
$E_6=\bigoplus_{i=1}^{6}\BZ\alpha_{i}$; 
\begin{center}
\unitlength 0.1in
\begin{picture}( 29.0000,  8.0000)( -0.5000, -9.1500)
%
\special{pn 8}%
\special{ar 400 800 50 50  0.0000000 6.2831853}%
%
\special{pn 8}%
\special{ar 1000 800 50 50  0.0000000 6.2831853}%
%
\special{pn 8}%
\special{ar 1600 800 50 50  0.0000000 6.2831853}%
%
\special{pn 8}%
\special{ar 2200 800 50 50  0.0000000 6.2831853}%
%
\special{pn 8}%
\special{ar 2800 800 50 50  0.0000000 6.2831853}%
%
\special{pn 8}%
\special{ar 1600 200 50 50  0.0000000 6.2831853}%
%
\special{pn 8}%
\special{pa 450 800}%
\special{pa 950 800}%
\special{fp}%
%
\special{pn 8}%
\special{pa 1050 800}%
\special{pa 1550 800}%
\special{fp}%
%
\special{pn 8}%
\special{pa 1650 800}%
\special{pa 2150 800}%
\special{fp}%
%
\special{pn 8}%
\special{pa 2250 800}%
\special{pa 2750 800}%
\special{fp}%
%
\special{pn 8}%
\special{pa 1600 750}%
\special{pa 1600 250}%
\special{fp}%
\put(4.0000,-10.0000){\makebox(0,0){$\alpha_1$}}%
\put(10.0000,-10.0000){\makebox(0,0){$\alpha_2$}}%
\put(16.0000,-10.0000){\makebox(0,0){$\alpha_3$}}%
\put(22.0000,-10.0000){\makebox(0,0){$\alpha_4$}}%
\put(28.0000,-10.0000){\makebox(0,0){$\alpha_5$}}%
\put(18.0000,-2.0000){\makebox(0,0){$\alpha_6$}}%
\end{picture}%
\end{center}
\noindent
We set 
\begin{equation*}
[0]:=0,\quad [1]:=\frac{1}{3}\left(\alpha_1-\alpha_2+\alpha_4-\alpha_5\right),\quad 
[2]:=\frac{1}{3}\left(-\alpha_1+\alpha_2-\alpha_4+\alpha_5\right). 
\end{equation*}
Then we have $E_{6}^{\ast}/E_{6}=\bigl\{\ol{[\ell]} := [\ell]+E_{6} 
\mid \ell=0,\,1,\,2\bigr\} \cong \BZ/3\BZ$. 

%
\subsection{Niemeier lattice $\Ni(A_{2}^{12})$ and 
its automorphism $\sigma_{1}$ of order $3$.}
\label{subsec:s1}
Define $Q$ to be the direct sum $A_{2}^{12}$ of 
12 copies of the root lattice $A_{2}$; 
following \cite[Chapter 10, \S1.5]{CS}, 
we use $\Omega:=\bigl\{\infty,\,0,\,1,\,\dots,\,10\bigr\}$ 
for the index set of the coordinate for $Q$, that is, 
$Q=\bigl\{(\alpha_{i})_{i \in \Omega} \mid 
  \text{$\alpha_{i} \in A_{2}$ for $i \in \Omega$}
  \bigr\}$.
We can identify $Q^{\ast}/Q$ with the $12$-dimensional 
vector space $\BF_{3}^{12}$ over the field $\BF_{3}$ of three elements. 
For each $j \in \Omega$, define $\ol{[1]}^{(j)}$ to be the element 
$(\ol{[\ell_{i}]})_{i \in \Omega} \in Q^{\ast}/Q$ 
with $\ell_{j}=1$ and $\ell_{i}=0$ 
for all $i \in \Omega$, $i \ne j$. 
Then, $\bigl\{\ol{[1]}^{(j)} \mid j \in \Omega\bigr\}$ 
forms a basis of $Q^{\ast}/Q$. 

Define $\nu \in \Sym \Omega$ by: 
\begin{equation*}
\nu=(\infty)(\ten 9876543210), 
\end{equation*}
where $\ten$ denotes $10$. 
Set $\Theta:=\bigl\{0,\,1,\,3,\,4,\,5,\,9\bigr\} \subset \Omega$, 
and define 
\begin{equation*}
w_{0}:=\sum_{i \in \Omega \setminus \Theta}\ol{[1]}^{(i)}- 
\sum_{j \in \Theta}\ol{[1]}^{(j)}, 
\end{equation*}
\begin{equation*}
w_{i}:=\nu^{i} \cdot w_{0} \quad \text{for $0 \le i \le 10$}, 
\qquad
w_{\infty}:=\sum_{i \in \Omega}\ol{[1]}^{(i)},
\end{equation*}
where the group $\Sym \Omega$ acts linearly on $Q^{\ast}/Q$ by: 
$g \cdot \ol{[1]}^{(i)} = \ol{[1]}^{(g(i))}$ 
for $g \in \Sym \Omega$ and $i \in \Omega$. 
The glue code $\Ni(A_{2}^{12})/Q$ of the Niemeier lattice $\Ni(A_{2}^{12})$ 
is identical to the subspace $\MCD_{12}$ of $Q^{\ast}/Q$ 
spanned by $\bigl\{w_{i} \mid i \in \Omega\bigr\}$
(see \cite[Chapter 18, \S4, I\hspace{-1pt}I]{CS}), that is,
\begin{equation*}
(Q \subset) \quad 
\Ni(A_{2}^{12})=\bigsqcup_{C \in \MCD_{12}} C \quad (\subset Q^{\ast}). 
\end{equation*}
Define $\sigma'=(\infty)(4)(7)(012)(35\ten)(689) \in \Sym \Omega$; 
we see from \cite[Chapter 10, Theorems 2 and 3]{CS} 
(see also \cite[Theorem~3.2.1]{SS}) that 
$\MCD_{12}$ is stable under the action of $\sigma'$. 

We arrange the coordinate of 
$Q=A_{2}^{12}$ as follows:
\begin{equation*}
(\mu_{i})_{i \in \Omega}=
\bigl(
 \underbrace{\mu_{\infty},\,\mu_{4},\,\mu_{7}}_{\in A_{2}^{3}} \mid
 \underbrace{\mu_{0},\,\mu_{3},\,\mu_{6}}_{\in A_{2}^{3}} \mid
 \underbrace{\mu_{1},\,\mu_{5},\,\mu_{8}}_{\in A_{2}^{3}} \mid
 \underbrace{\mu_{2},\,\mu_{10},\,\mu_{9}}_{\in A_{2}^{3}}
\bigr).
\end{equation*}
Let $\psi_{1}$ be the fixed-point-free automorphism of $A_{2}$ 
defined by: $(x_0,x_1,x_2) \mapsto (x_2,x_0,x_1)$; note that 
$\psi_{1}(\ol{[\ell]})=\ol{[\ell]}$ for every $\ell=0,\,1,\,2$. 
Define $\sigma_{1}:Q^{\ast} \rightarrow Q^{\ast}$ by: 
\begin{equation*}
\bigl(
 \mu_{\infty},\,\mu_{4},\,\mu_{7} \mid \bmu_{036} \mid 
 \bmu_{158} \mid \bmu_{2\ten9}
\bigr) 
\stackrel{\sigma_{1}}{\longrightarrow} 
\bigl(
 \psi_{1}(\mu_{\infty}),\,\psi_{1}(\mu_{4}),\,\psi_{1}(\mu_{7}) \mid 
  \bmu_{2\ten9} \mid \bmu_{036} \mid \bmu_{158}
\bigr)
\end{equation*}
for $\mu_{\infty},\,\mu_{4},\,\mu_{7} \in A_{2}^{\ast}$ and 
$\bmu_{036},\,\bmu_{158},\,\bmu_{2\ten9} \in (A_{2}^{\ast})^{3}$. 
By the argument above, we see that $\sigma_{1}$ stabilizes the Niemeier 
lattice $\Ni(A_{2}^{12})$, and hence $\sigma_{1} \in \Aut \Ni(A_{2}^{12})$. 
It can be easily seen that
\begin{equation} \label{eq:s1}
\rank \Ni(A_{2}^{12})^{\sigma_1} = 6.
\end{equation}
%
%
\subsection{Niemeier lattice $\Ni(D_{4}^{6})$ and 
its automorphisms $\sigma_{2}$, $\sigma_{3}$, $\sigma_{4}$ of order $3$.}
\label{subsec:s234}

By \cite[Chapter 16, Table 16.1]{CS}, 
the glue code $\Ni(D_{4}^{6})/Q$ of the Niemeier lattice $\Ni(D_{4}^{6})$ is 
generated by the cosets in $Q^{\ast}/Q$ containing $[111111]$, $[222222]$, and 
\begin{equation*}
[002332],\ [023320],\ [033202],\ [032023],\ [020233], 
\end{equation*}
where $[a_{1} \cdots a_{6}]:=([a_{1}],\,\dots,\,[a_{6}]) \in 
Q^{\ast}=(D_{4}^{\ast})^{6}$. Namely, $\Ni(D_{4}^{6})$ is 
the sublattice of $Q^{\ast}$ generated by $Q$ and these $7$ elements in $Q^{\ast}$. 

Let us define $\sigma_{2},\,\sigma_{3},\,\sigma_{4}:
Q^{\ast} \rightarrow Q^{\ast}$ by: 
\begin{align*}
& \sigma_{2}(\gamma_{1},\,\dots,\,\gamma_{6})=
(\vp(\gamma_{1}),\,\dots,\,\vp(\gamma_{6})), \\
& \sigma_{3}(
 \gamma_{1},\,\gamma_{2},\,\gamma_{3},\,
 \gamma_{4},\,\gamma_{5},\,\gamma_{6})=
(
 \vp(\gamma_{1}),\,\vp(\gamma_{2}),\,\vp(\gamma_{3}),\,
 \omega(\gamma_{4}),\,\omega(\gamma_{5}),\,\omega(\gamma_{6})), \\
& \sigma_{4}
(\gamma_{1},\,\gamma_{2},\,\gamma_{3},\,\gamma_{4},\,\gamma_{5},\,\gamma_{6})=
(\psi(\gamma_{1}),\,\vp(\gamma_{2}),\,\vp^{-1}(\gamma_{3}),\,
 \gamma_{6},\,\vp^{-1}(\gamma_{4}),\,\vp(\gamma_{5})). 
\end{align*}
We know from \cite[\S4.2, \S5.2, \S6.2]{SS} that 
$\Ni(D_{4}^{6}) \subset Q^{\ast}$ is stable under 
the actions of $\sigma_{2}$, $\sigma_{3}$, $\sigma_{4}$, which implies that 
$\sigma_{2},\,\sigma_{3},\,\sigma_{4} \in \Aut \Ni(D_{4}^{6})$, and also that 
%
%
\begin{equation} \label{eq:s234}
\rank \Ni(D_{4}^{6})^{\sigma_2} = 0, \quad 
\rank \Ni(D_{4}^{6})^{\sigma_3} = 6, \quad 
\rank \Ni(D_{4}^{6})^{\sigma_4} = 6. 
\end{equation}
%
%
\subsection{Niemeier lattice $\Ni(A_{5}^{4}D_{4})$ and 
its automorphism $\sigma_{5}$ of order $3$.}
\label{subsec:s5}

By \cite[Chapter 16, Table 16.1]{CS}, the glue code $\Ni(A_{5}^{4}D_{4})/Q$ of 
the Niemeier lattice $\Ni(A_{5}^{4}D_{4})$ is generated by the cosets in $Q^{\ast}/Q$ 
containing $[33001]$, $[30302]$, $[30033]$, and 
$[20240]$, $[22400]$, $[24020]$, 
where $[a_{1} \cdots a_{5}]:=([a_{1}],\,\dots,\,[a_{5}]) \in 
Q^{\ast}=(A_{5}^{\ast})^{4}D_{4}^{\ast}$. 
Namely, $\Ni(A_{5}^{4}D_{4})$ is 
the sublattice of $Q^{\ast}$ generated by $Q$ and these $6$ elements in $Q^{\ast}$. 

Let us define $\sigma_{5}:Q^{\ast} \rightarrow Q^{\ast}$ by: 
\begin{equation*}
\sigma_{5}(\gamma_{1},\,\gamma_{2},\,\gamma_{3},\,\gamma_{4},\,\gamma_{5})=
(\psi_{2}(\gamma_{1}),\,\gamma_{4},\,\gamma_{2},\,\gamma_{3},\,\vp(\gamma_{5})),
\end{equation*}
where $\psi_{2}$ is the automorphism of $A_{5}$ defined by: 
\begin{equation*}
(x_{0},\,x_{1},\,x_{2},\,x_{3},\,x_{4},\,x_{5})
 \mapsto
(x_{2},\,x_{0},\,x_{1},\,x_{5},\,x_{3},\,x_{4}).
\end{equation*}
We know from \cite[\S7.2]{SS} that 
$\Ni(A_{5}^{4}D_{4}) \subset Q^{\ast}$ is stable under 
the action of $\sigma_{5}$, which implies that 
$\sigma_{5} \in \Aut \Ni(A_{5}^{4}D_{4})$, and also that 
%
%
\begin{equation} \label{eq:s5}
\rank \Ni(A_{5}^{4}D_{4})^{\sigma_{5}} = 6.
\end{equation}
%
%
\subsection{Niemeier lattice $\Ni(E_{6}^{4})$ and 
its automorphism $\sigma_{6}$ of order $3$.}
\label{subsec:s6}

By \cite[Chapter 16, Table 16.1]{CS}, the glue code $\Ni(E_{6}^{4})/Q$ 
of the Niemeier lattice $\Ni(E_{6}^{4})$ is generated by the cosets in $Q^{\ast}/Q$ 
containing $[1012]$, $[1120]$, $[1201]$, 
where $[a_{1} \cdots a_{4}]:=([a_{1}],\,\dots,\,[a_{4}]) \in 
Q^{\ast}=(E_{6}^{\ast})^{4}$. Namely, $\Ni(E_{6}^{4})$ is 
the sublattice of $Q^{\ast}$ generated by $Q$ and 
$[1012]$, $[1120]$, $[1201] \in Q^{\ast}$. 

Define $\psi_{3}:=r_1r_2r_4r_5r_6r_0 \in W(E_{6})$, where 
$r_i$ denotes the simple reflection with respect to $\alpha_i$ for $1 \le i \le 6$, 
and $r_{0}$ denotes the reflection with respect to the highest root of $E_{6}$.
Then we define $\sigma_{6}:Q^{\ast} \rightarrow Q^{\ast}$ by: 
\begin{equation*}
(\gamma_{1},\,\gamma_{2},\gamma_{3},\gamma_{4}) \mapsto 
(\psi_{3}(\gamma_{1}),\,\gamma_{4},\gamma_{2},\gamma_{3}). 
\end{equation*}
We know from \cite[\S3.2]{M} that $\Ni(E_{6}^{4}) \subset Q^{\ast}$ is 
stable under the action of $\sigma_{6}$, and hence 
$\sigma_{6} \in \Aut \Ni(E_{6}^{4})$. Since $\psi_{3}$ is fixed-point-free on $E_{6}$, 
we see that 
%
%
\begin{equation} \label{eq:s6}
\rank \Ni(E_{6}^{4})^{\sigma_{6}} = 6.
\end{equation}
%
%
\section{Niemeier lattices and their automorphisms of order $3$.}
\label{sec:Lsigma6}
%
%
\subsection{Main result of \S\ref{sec:Lsigma6}.}
\label{sec:mainS3}
Let $L$ be a Niemeier lattice, and let $Q$ be its root lattice.
Let us consider the following conditions on $\tau \in \Aut L$: 
\begin{equation} \label{eq:tau}
\begin{cases}
|\tau|=3; \\[1.5mm]
\rank L^\tau \in 6\BZ; & \\[1.5mm]
\tau \notin G_{0}(L) = G_{0}(Q); 
\end{cases}
\end{equation}
recall that $G_{0}(L) = G_{0}(Q) \sbg \Aut L$ denotes the Weyl group of $Q$ 
(see \S\ref{subsec:Niemeier}). Notice that $\rank L^\tau = 0,\,6,\,12$, or $18$. 
For each $r \in \{0,\,6,\,12,\,18\}$, 
let us denote by $\CC_{r}$ the set of 
conjugacy classes in $\Aut L$ which contain 
elements $\tau \in \Aut L$ satisfying \eqref{eq:tau}
with $\rank L^\tau=r$. 
%
%
\begin{rem} \label{rem:sigma}
Observe that all of $\sigma_{1},\,\dots,\,\sigma_{7}$ 
as in \S\ref{sec:review-Ni} satisfy \eqref{eq:tau}, and 
%
%
\begin{equation} \label{table:sigma}
\begin{array}{c||c|c|c|c|c|c|c}
Q & A_2^{12} & D_4^{6} 
& D_4^{6} & D_4^{6} 
& A_{5}^{4}D_{4} & E_{6}^{4} & 
\{0\} \\[1mm] \hline 
\tau & \sigma_1 & \sigma_{2} 
& \sigma_{3} & \sigma_{4} 
& \sigma_{5} & \sigma_{6}
& \sigma_{7} \\[1mm] \hline\hline
\rank L^{\tau} 
& 6 & 0 & 6 & 6 & 6 & 6 & 0
\end{array}
\end{equation}
It can be easily checked that the automorphisms 
$\sigma_{3},\,\sigma_{4} \in \Aut \Ni(D_{4}^{6})$ are 
not conjugate to each other in $\Aut \Ni(D_{4}^{6})$; 
indeed, $\sigma_{3}$ fixes 18 elements in $\Delta = 
\bigl\{\alpha \in \Ni(D_{4}^{6}) \mid \pair{\alpha}{\alpha}=2 \bigr\}$, 
but $\sigma_{4}$ fixes no element in $\Delta$ 
(see also Propositions~\ref{prop:autoD4a} and \ref{prop:autoD4b} below). 
\end{rem}

The following is the main theorem in this section.
%
%
\begin{thm}\label{thm:mainS3} 
Let $L$ be a Niemeier lattice, and let $Q$ be its root lattice.
\begin{enu}
\item If there exists $\tau \in \Aut L$ 
satisfying \eqref{eq:tau}, 
then $Q$ is isomorphic to one of the following: 
\begin{equation} \label{eq:Q}
\bigl\{ 0 \bigr\} ,\ A_1^{24},\ A_2^{12},\ 
A_3^{8},\ D_4^6,\ A_5^4D_4,\ A_6^4,\ D_6^4,\ E_6^4.
\end{equation}
Moreover, if $Q \neq \{0\}$, and 
there exists $\tau \in \Aut L$ which satisfies \eqref{eq:tau}, and 
preserves each indecomposable component of $Q$, 
then $Q = D_4^6$.

\item For each of $Q$'s in \eqref{eq:Q} and 
$r=0$, $6$, $12$, $18$, the cardinality $\# \CC_{r}$ of 
the set $\CC_{r}$ is given 
by the following table:
%
%
\begin{equation} \label{table:order3}
\begin{array}{|c||c|c|c|c|c|c|c|c|c|c|} \hline 
 Q & \{0\} &
 A_1^{24} & A_2^{12} & A_3^8 &
 D_4^6 & A_5^4 D_4 & A_6^4 & D_6^4 & E_6^4  \\ \hline\hline
 \#\CC_{0}
  & 1 & 0 & 0 & 0 & 1 & 0 & 0 & 0 & 0  \\ \hline
 \#\CC_{6} 
  & 1 & 0 & 1 & 0 & 2 & 1 & 0 & 0 & 1  \\ \hline
 \#\CC_{12}
  & 1 & 1 & 1 & 1 & 2 & 1 & 2 & 1 & 1  \\ \hline
 \#\CC_{18}
  & 0 & 0 & 0 & 0 & 0 & 0 & 0 & 0 & 0  \\ \hline
\end{array}
\end{equation}
In particular, 
\begin{enumerate}[\rm (i)]
\item there exists no $\tau \in \Aut L$ 
satisfying \eqref{eq:tau} with $\rank L^{\tau}=18$; 

\item if $Q \ne \{0\}$, and $\rank L^{\tau} \in \bigl\{0,\,6\bigr\}$, 
then $\tau$ is conjugate to one of 
$\sigma_{1},\,\dots,\,\sigma_{6}$ (see Remark~\ref{rem:sigma}). 
\end{enumerate}
\end{enu}
\end{thm}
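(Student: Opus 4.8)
The plan is to reduce the entire classification to a computation inside the finite factor group $H(L)$ of each automorphism group $\Aut L = G_{0}(L):H(L)$, using the exact sequence \eqref{eq:exact}. The key observation is that the condition $\tau \notin G_{0}(L)$ in \eqref{eq:tau} means that the image of $\tau$ in $\Aut L / G_{0}(L) \cong H(L)$ is nontrivial, and since $|\tau|=3$ with $G_{0}(L)$ a normal Weyl subgroup, I would first argue that every $\tau$ satisfying \eqref{eq:tau} may be replaced, within its conjugacy class, by an element whose projection $\pi_{2}(\tau) \in G_{2}(L)$ controls how $\tau$ permutes the indecomposable components in $\SC_{Q}$ (Remark~\ref{rem:G2L}). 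The crucial rank constraint is that $\rank L^{\tau} = \rank Q^{\tau} \in 6\BZ$, and because $\tau$ has order $3$, each $3$-cycle of components contributes a fixed-point space whose rank is the rank of a single component, while each fixed component contributes $\rank Q_{m}^{\tau|_{Q_m}}$. This gives an arithmetic sieve on the component type and multiplicity.

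First I would handle part (1). For each of the $23$ non-Leech root lattices $Q=\bigoplus_m Q_m$ in \cite[Table~16.1]{CS}, I would enumerate the possible cycle types of $\pi_{2}(\tau)$ acting on $\SC_{Q}$ subject to $|\pi_{2}(\tau)| \mid 3$, and compute the resulting contribution to $\rank L^{\tau}$. The point is that when $\tau$ moves a component $Q_i$ to an isomorphic $Q_j$ to $Q_k$ in a $3$-cycle, the fixed-point rank contribution equals $\rank Q_i$ exactly; when $\tau$ fixes a component, the contribution is $\rank Q_i^{\omega_i}$ for some order-dividing-$3$ element $\omega_i \in \Aut Q_i$. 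Requiring the total to lie in $6\BZ$, together with the fact that $\tau$ must genuinely act nontrivially modulo $G_{0}(L)$ (so it must either permute components via $G_{2}(L)$ or act by an order-$3$ diagram automorphism via $G_{1}(L)$), eliminates all root lattices except the nine listed in \eqref{eq:Q}. For the order-$3$ diagram automorphism possibilities I would use that only $A_n$ $(n\ge 2)$, $D_4$, and $E_6$ admit automorphisms of order $3$ in $G_{1}(Q_m)$, which is why $D_{4}^{6}$ is singled out as the only type admitting a component-preserving $\tau$ satisfying \eqref{eq:tau} (its $G_{1}(L)=\BZ_3$ from \eqref{table:G1G2}).

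For part (2), the counting of $\#\CC_r$, I would fix each surviving $Q$ and classify conjugacy classes of order-$3$ elements in $H(L)$ that lift to order-$3$ elements of $\Aut L$ with the prescribed fixed-point rank, then count how many such $\Aut L$-conjugacy classes there are for each $r$. Here I would exploit the known group structures $G_{1}(L)$ and $G_{2}(L)$ from \eqref{table:G1G2}, computing conjugacy classes of order-$3$ elements in each $H(L)$ via the extension \eqref{eq:exact} $1 \to G_1(L) \to H(L) \to G_2(L) \to 1$. For each candidate class I would verify that it is realized by an actual lattice automorphism (not merely a root-lattice automorphism) by checking stability of the glue code $L/Q$, and compute $\rank L^{\tau}$ by the cycle-type bookkeeping above. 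The identification with $\sigma_1,\dots,\sigma_6$ in conclusion (ii) then follows by matching fixed-point ranks and component-permutation data against \eqref{table:sigma}, while the vanishing $\#\CC_{18}=0$ drops out because achieving rank $18$ would force $\tau$ to fix too many components with an order-$3$ action of small fixed rank, which the arithmetic forbids.

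The main obstacle will be the conjugacy-class bookkeeping inside the larger permutation groups $M_{24}$, $M_{12}$, and $AGL_{3}(2)$ appearing as $G_{2}(L)$: distinguishing $\Aut L$-conjugacy from mere $G_{2}(L)$-conjugacy requires understanding how $G_1(L)$ and the glue code interact with each order-$3$ permutation class, and in the Mathieu-group cases there can be several order-$3$ classes in $G_2(L)$ that must be sorted by their fixed-point structure on $\SC_Q$ and then checked for liftability. I expect the delicate cases to be $Q=A_1^{24}$ and $Q=A_2^{12}$ (where $G_2(L)$ is Mathieu) and $Q=A_3^8$ (where $G_2(L)=AGL_3(2)$), since there the number of order-$3$ classes and their fixed-point counts require genuine group-theoretic input rather than a one-line rank count. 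The remaining cases with small $G_2(L)$ (the symmetric and alternating groups $\FS_6,\FS_4,\FA_4,\FS_3$) should be direct, and $Q=\{0\}$ is handled separately via $\Aut\Lambda$ as indicated after \eqref{table:G1G2}.
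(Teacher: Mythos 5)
Your overall architecture matches the paper's (split according to the action of $\Phi(\tau)$ on $\SC_{Q}$, rank bookkeeping where each $3$-cycle of components contributes the rank of one component, liftability via the glue code, and ATLAS/Mathieu data), but there are two concrete gaps. First, your statement that $A_n$ $(n\ge 2)$, $D_4$, and $E_6$ all admit order-$3$ elements of $G_{1}(Q_m)$ is false: the Dynkin diagram automorphism groups of $A_n$ $(n \ge 2)$ and $E_6$ are $\BZ_2$, and only $G_{1}(D_4) \cong \FS_3$ contains order-$3$ elements (this fact is exactly what drives Lemma~\ref{lem:fixedr}\,(1)). Taken literally, your premise would leave $A_2^{12}$, $E_6^4$, etc.\ as candidates in the component-preserving case, so your conclusion $Q=D_4^6$ does not follow from your own argument. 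The correct route (Claim~\ref{c:m2-1} in \S\ref{subsec:main3}) writes $\tau=\tau_0\tau_1$ with $\tau_1 \in G_1(L)$ of order $3$, forces some $Q_m \cong D_4$, and then must still eliminate $A_5^4D_4$ — which \emph{does} contain a $D_4$ component — using $G_1(\Ni(A_5^4D_4)) \cong \BZ_2$ from \eqref{table:G1G2}; this exclusion is absent from your sketch. Relatedly, in part (1) bare rank arithmetic does not kill $A_4^6$: a single $3$-cycle on components, with one fixed component rotated by an order-$3$ Weyl element, gives $\rank L^{\tau}=4+4+2+2=12 \in 6\BZ$. One needs that $\Phi(\Aut L) \cong PGL_2(5)$ has only order-$3$ elements of cycle type $(3,3)$ on the six components, i.e., your sieve must run over cycle types realized in $G_2(L)$, not in $\Sym \SC_Q$ — your wording allows this, but the dependence on the glue-code-constrained group $G_2(L)$ is where the exclusion actually happens and should be explicit.

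Second, and more seriously, your reduction of $\#\CC_r$ to counting order-$3$ classes in $H(L)$ with prescribed fixed-point rank is not yet a proof: a general $\tau \in \Aut L$ is $\tau_0\tau_H$ with an arbitrary Weyl part $\tau_0 \in G_0(L)$, and two automorphisms with the same $\tau_H$ and the same $\rank L^{\tau}$ are not obviously conjugate. The paper's engine for this is Lemma~\ref{lem:conj} — if $w$ lies in the Weyl group of the three cyclically permuted blocks and $w\tau$ has order $3$, then $w\tau$ is conjugate to $\tau$, proved by the explicit element $u=w_1w_2w_1^{\tau^2}$ — combined with Lemma~\ref{lem:fixedr}\,(2) and Lemma~\ref{lem:D4}\,(2) to normalize the action on the components fixed by $\Phi(\tau)$. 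No analogue of this normalization appears in your plan, and without it none of the table entries (e.g., $\#\CC_{12}=1$ for $A_1^{24}$, or $\#\CC_{6}=1$ for $A_5^4D_4$) can be certified. You also misplace the hard bookkeeping: the Mathieu and $AGL_3(2)$ cases reduce cleanly because there $G_1(L) \in \{1,\,\BZ_2\}$ and Lemma~\ref{lem:Z2} transports order-$3$ classes across the extension \eqref{eq:exact}. The genuinely delicate case is $Q=D_4^6$, where $G_1(L) \cong \BZ_3$, Lemma~\ref{lem:Z2} fails, $H(L)$ is the nonsplit extension $3.\FS_6$ (the paper resorts to its character table), and one must track the splitting $\Con{\vp}{\Aut D_4}=\Con{\vp}{P} \sqcup \Con{\vp^{-1}}{P}$ together with the fact that elements of $H(L)$ projecting into $\FA_6$ centralize $\omega^{(6)}$ in order to merge and separate classes correctly; your plan, which labels the $\FS_6$ case ``direct,'' would stall exactly there.
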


We will prove Theorem \ref{thm:mainS3}\,(1) in \S\ref{subsec:main3}. 
In \S\ref{subsec:main32} and \S\ref{subsec:main32-D4}, 
for each Niemeier lattice $L=\Ni(Q)$ given in Theorem~\ref{thm:mainS3}\,(1), 
we will classify the automorphisms $\tau \in \Aut L$ 
satisfying \eqref{eq:tau}, up to conjugation, thereby proving 
Theorem \ref{thm:mainS3}\,(2); 
\begin{itemize}
\item 
the classification for $Q=\{0\}$ 
will be given in Proposition~\ref{prop:Leech}; 

\item 
the classification for $Q=A_5^4 D_4$ 
will be given in Proposition~\ref{prop:autoA5}; 

\item
the classification for 
$Q = A_1^{24}$, $A_3^8$, $A_6^4$, $D_6^4$, $A_2^{12}$, and $E_6^4$
will be given in Proposition~\ref{prop:autoA1}; 

\item 
the classification for $Q = D_4^6$ will be given in 
Propositions~\ref{prop:autoD4a} and \ref{prop:autoD4b}. 
\end{itemize}

%
\subsection{Automorphisms of root lattices of order $3$.}
\label{subsec:autQm}
It is obvious that $\Aut A_{1} = W(A_{1}) \cong \BZ_{2}$ 
does not have an element of order $3$.
%
%
\begin{lem} \label{lem:fixedr}
Let $R$ be a root lattice of type $A_{n}$ with $n \ge 2$, 
$D_{n}$ with $n \ge 5$, or $E_{6}$. 
\begin{enu}

\item
If $\ve \in \Aut R$ is of order $3$, then $\ve$ is contained 
in the Weyl group $W(R)$ of $R$, and 
\begin{equation*}
\rank R^{\ve}=
 \begin{cases}
 \text{\rm $n-2c$ for some $1 \le c \le (n+1)/3$, 
       if $R=A_{n}$}, \\[1.5mm]
 \text{\rm $n - 2c$ for some $1 \le c \le n/3$, 
       if $R=D_{n}$}, \\[1.5mm]
 \text{\rm $0$, $2$, or $4$, if $R=E_{6}$}.
 \end{cases}
\end{equation*}

\item If $\ve_1,\,\ve_2 \in \Aut R$ are of order $3$ 
(and hence $\ve_{1},\,\ve_{2} \in W(R)$ by (1)), 
and $\rank R^{\ve_1} = \rank R^{\ve_2}$, 
then $\ve_1$ and $\ve_{2}$ are conjugate to each other in $W(R)$. 
\end{enu}
\end{lem}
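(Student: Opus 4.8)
The plan is to establish both parts by reducing everything to the theory of elements of finite order in Weyl groups, where conjugacy is controlled by combinatorial data. For part (1), the first step is to show that an order-$3$ automorphism $\ve \in \Aut R$ must lie in $W(R)$. Since $\Aut R = W(R) : G_1(R)$ with $G_1(R)$ the group of Dynkin diagram automorphisms, I would argue that the image of $\ve$ in the quotient $\Aut R / W(R) \cong G_1(R)$ has order dividing $3$; for $R = A_n$ ($n \ge 2$), $D_n$ ($n \ge 5$), and $E_6$, the diagram automorphism group $G_1(R)$ is $\BZ_2$, $\BZ_2$, and $\BZ_2$ respectively (it is only for $D_4$ that a $\BZ_3$ appears, which is precisely why $D_4$ is excluded from the hypothesis). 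Since $\BZ_2$ has no element of order $3$, the image must be trivial, forcing $\ve \in W(R)$.

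The next step is to pin down the possible values of $\rank R^{\ve}$. Because $\ve \in W(R)$ has order $3$, its eigenvalues on $R \otimes_\BZ \BR$ are cube roots of unity, and $\rank R^{\ve}$ equals the multiplicity of the eigenvalue $1$; the non-fixed part decomposes into two-dimensional $\ve$-invariant subspaces on which $\ve$ acts as rotation by $2\pi/3$, so $\rank R - \rank R^{\ve}$ is even, say $2c$ with $c \ge 1$ (as $\ve \ne \id$). To get the stated bounds on $c$, I would use the realization of order-$3$ Weyl group elements as products of $\langle \omega \rangle$-type Coxeter elements on sub-root-systems: in type $A_n$ each such $2$-dimensional rotation block consumes a copy of $A_2$ (accounting for the bound $c \le (n+1)/3$ coming from packing $A_2$'s into $A_n$), while in type $D_n$ each block uses up enough coordinates to give $c \le n/3$. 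For $E_6$ one simply lists the conjugacy classes of order-$3$ elements in $W(E_6)$ and reads off the fixed-space ranks $0$, $2$, $4$.

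For part (2), the key principle is that conjugacy classes of elements of a fixed finite order in a Weyl group are determined by their ``type'' in the sense of Carter, i.e. by the isomorphism class of the sub-root-system on which the element acts with no nonzero fixed vectors, and that the fixed-point rank is a complete invariant in this order-$3$ case. Concretely, an order-$3$ element $\ve \in W(R)$ with $\rank R^{\ve} = n - 2c$ acts trivially on an $(n-2c)$-dimensional space and as a fixed-point-free order-$3$ isometry on the orthogonal complement; the latter is (up to $W(R)$-conjugacy) a product of $c$ elementary rotations, and for the classical types $A_n$, $D_n$ these are all $W(R)$-conjugate once $c$ is fixed, by the standard classification of elements of given cycle type / signed cycle type in the symmetric and signed-permutation groups. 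For $E_6$, conjugacy is settled by the finite list of classes, where again $\rank R^{\ve}$ separates them.

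The main obstacle I expect is the uniform treatment of the bounds on $c$ and the conjugacy statement across the different types, since the combinatorics of $W(A_n) \cong \FS_{n+1}$ and $W(D_n) \cong \BZ_2^{n-1} : \FS_n$ are genuinely different: for $A_n$ an order-$3$ element corresponds to a permutation whose nontrivial cycles all have length $3$ (each contributing a two-dimensional rotation and eating three of the $n+1$ coordinates, whence $3c \le n+1$), and two such are conjugate precisely when they have the same number of $3$-cycles; for $D_n$ one must track the signed-permutation structure and verify that order-$3$ elements involve no sign changes, reducing again to $3$-cycles in $\FS_n$ and giving $3c \le n$. Rather than forcing a single argument, I would handle the three families ($A$, $D$, and the exceptional $E_6$) as separate cases, invoking the classical description of conjugacy classes in $\FS_m$ and in the Weyl group of type $D$, and treating $E_6$ by explicit enumeration.
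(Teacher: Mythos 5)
Your proposal is correct and follows essentially the same route as the paper: the decomposition $\Aut R = W(R):G_{1}(R)$ with $G_{1}(R)\cong\BZ_{2}$ forces $\ve\in W(R)$, and both parts then reduce to counting $3$-cycles in $\FS_{n+1}$ (type $A_{n}$) and in $\FS_{n}$ after eliminating the sign-change part (type $D_{n}$), with $E_{6}$ settled by inspecting the finite list of order-$3$ classes (the paper does this via a MAGMA character-table computation for $\Aut E_{6}$ rather than citing Carter's classification). The only noteworthy difference is in type $D_{n}$: where you invoke the signed-cycle-type classification (be careful that an order-$3$ element of $W(D_{n})$ can literally involve sign changes --- the correct statement is that its signed cycle type has only positive cycles, whence it is conjugate to a plain permutation), the paper achieves the same reduction by the explicit one-line conjugation $(\rho z\rho^{-1})^{-1}(z\rho)(\rho z\rho^{-1})=\rho(z\rho^{-1})^{3}=\rho$ for $\ve=z\rho$ with $z\in\BZ_{2}^{n-1}$, $\rho\in\FS_{n}$.
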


\begin{proof}
Recall that $\Aut R = W(R) : G_{1}(R)$. 
Since $G_{1}(R)$ ($=$ the group of Dynkin diagram automorphisms) 
does not have an element of order $3$ in these cases, 
we get $\ve \in W(R)$ if $\ve \in \Aut R$ is of order $3$. 

Assume that $R=A_{n}$ with $n \ge 2$. 
Since $W(A_{n}) \cong \FS_{n+1}$, 
we see that $\ve$ is equal to 
a product of mutually commutative $3$-cycles; 
notice that the number $c$ of $3$-cycles in $\ve$ satisfies 
$1 \le c \le (n+1)/3$. Then we have $\rank A_{n}^\ve = n-2c$, 
which proves part (1). Also, part (2) is obvious by this argument. 

Assume that $R=D_{n}$ with $n \ge 5$; 
recall that $W(D_{n}) \cong \BZ_2^{n-1} : \FS_n$. 
Write $\ve$ as: $\ve = z \rho$, 
where $z \in \BZ_{2}^{n-1}$ and $\rho \in \FS_{n}$. 
Then, $\ve$ is conjugate to $\rho$. Indeed, 
we see that $\rho$ is of order $3$, and 
$(z\rho^{-1})^{3}=1$. Hence, 
$(\rho z \rho^{-1})^{-1} \ve (\rho z \rho^{-1}) = \rho (z\rho^{-1})^3=\rho$. 
Thus we may assume from the beginning that $\ve \in \FS_{n}$. 
Then, $\ve$ is equal to a product of mutually commutative $3$-cycles; 
notice that the number $c$ of $3$-cycles in $\ve$ satisfies 
$1 \le c \le n/3$. It can be easily seen that $\rank D_{n}^\ve = n-2c$, 
which proves part (1). Part (2) is obvious by this argument.

The assertions for the case of $R=E_{6}$ follow from 
the character table of $\Aut E_6$, 
which can be obtained by the computer program ``MAGMA''. 
%
%
%
\end{proof}

We turn to the case of $D_{4}$; 
recall the definitions of $\omega,\,\vp,\,\psi \in \Aut D_{4}$ 
from \S\ref{subsec:root}. 
Define $P:=\langle W(D_{4}),\,\omega\rangle \cong 
W(D_{4}) : \langle \omega \rangle \lhd 
\Aut D_{4} \cong W(D_{4}) : \FS_{3}$; recall that 
$\vp \in W(D_{4})\omega \subset P$. 
We claim that an element $z \in W(D_{4})\omega$ 
is not conjugate to any element of $W(D_{4})\omega^{-1}$ in $P$. 
Indeed, since $P=\langle W(D_{4}),\,\omega\rangle$ and $W(D_{4}) \lhd P$, 
we see that $x^{-1} z x$ is contained in $W(D_{4})\omega$ for all $x \in P$. 
The assertion follows immediately from this fact and 
$W(D_{4}) \omega \cap W(D_{4})\omega^{-1} = \emptyset$. 
%
%
\begin{lem} \label{lem:D4}
\mbox{}
\begin{enu}
\item
There are exactly three conjugacy classes 
of order $3$ elements in $\Aut D_{4}$, which are 
$\Con{\psi}{\Aut D_4}$, 
$\Con{\omega}{\Aut D_4}$, and 
$\Con{\vp}{\Aut D_4}$. 
Moreover, we have
\begin{equation}
\begin{cases}
\Con{\psi}{\Aut D_4} = \Con{\psi}{P}; \\[1.5mm]
\Con{\omega}{\Aut D_4} = 
\Con{\omega}{P} \sqcup \Con{\omega^{-1}}{P}; \\[1.5mm]
\Con{\vp}{\Aut D_4} = 
\Con{\vp}{P} \sqcup \Con{\vp^{-1}}{P}. 
\end{cases}
\end{equation}

\item Let $\ve \in \bigl\{\omega,\,\vp\bigr\}$ 
and $c \in \bigl\{1,\,-1\bigr\}$. Then, 
\begin{equation}
\Con{\ve^{c}}{P}=\bigl\{y^{-1} \ve^{c} y \mid y \in W(D_{4})\bigr\}.
\end{equation}
\end{enu}
\end{lem}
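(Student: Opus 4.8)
The plan is to exploit the fact that $P=\langle W(D_4),\,\omega\rangle$ is a normal subgroup of index $2$ in $\Aut D_4$, and that every order-$3$ element of $\Aut D_4$ in fact lies in $P$. Indeed, under the quotient map $\Aut D_4 \twoheadrightarrow \Aut D_4/W(D_4) \cong \FS_3$, an order-$3$ element maps to an element of order $1$ or $3$; since every order-$3$ element of $\FS_3$ lies in $\langle \omega\rangle$, each order-$3$ element of $\Aut D_4$ lies in $W(D_4) \cup W(D_4)\omega \cup W(D_4)\omega^{-1}=P$. I would then classify order-$3$ elements by the pair of invariants: (i) the $W(D_4)$-coset they occupy (equivalently, their image in $\FS_3$), and (ii) the rank $\rank D_4^g$ of their fixed sublattice, both constant on conjugacy classes. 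For the $W(D_4)$-coset, writing an order-$3$ element of $W(D_4)\cong \BZ_2^3:\FS_4$ as $z\rho$ and running the conjugation trick of Lemma~\ref{lem:fixedr} shows it is $W(D_4)$-conjugate to a $3$-cycle $\rho \in \FS_4$; as all $3$-cycles are conjugate in $\FS_4$, there is a single $W(D_4)$-class here, represented by $\psi$. For the cosets $W(D_4)\omega^{\pm 1}$, an order-$3$ element $g$ acts on $D_4 \otimes_\BZ \BR$ with eigenvalues among the cube roots of unity, the non-real ones in conjugate pairs, so $\rank D_4^g \in \{0,2,4\}$; the value $4$ forces $g=\id$ and is excluded, leaving $\{0,2\}$, realized by $\vp^{\pm 1}$ and $\omega^{\pm 1}$ respectively.

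Granting the key point that within each coset $W(D_4)\omega^{\pm 1}$ the rank determines the $W(D_4)$-conjugacy class, part~(1) follows. The three pairs $(\text{trivial image},\,2)$, $(3\text{-cycle},\,2)$, $(3\text{-cycle},\,0)$ give the classes of $\psi$, $\omega$, $\vp$, which are mutually non-conjugate in $\Aut D_4$ since the two invariants are conjugacy invariants. For the refinement into $P$-classes, fix a diagram-automorphism transposition $t \in G_1(D_4)=\FS_3$ with $t\omega t^{-1}=\omega^{-1}$, so that $\Aut D_4 = P \sqcup Pt$ and $\Con{g}{\Aut D_4}=\Con{g}{P}\cup\Con{tgt^{-1}}{P}$ for $g \in P$. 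For $\psi$, the element $t\psi t^{-1}$ again lies in $W(D_4)$ (normality) and is of order $3$, hence $W(D_4)$-conjugate to $\psi$; thus $\Con{\psi}{\Aut D_4}=\Con{\psi}{P}$. For $\omega$, one has $t\omega t^{-1}=\omega^{-1}$, and the claim established just before the lemma — that no element of $W(D_4)\omega$ is $P$-conjugate to one of $W(D_4)\omega^{-1}$ — gives $\Con{\omega}{P}\cap\Con{\omega^{-1}}{P}=\emptyset$, whence $\Con{\omega}{\Aut D_4}=\Con{\omega}{P}\sqcup\Con{\omega^{-1}}{P}$. For $\vp$, the element $t\vp t^{-1}$ lies in $W(D_4)\omega^{-1}$ and is fixed-point-free, so by the key point it is $W(D_4)$-conjugate, hence $P$-conjugate, to $\vp^{-1}$; the same disjointness claim then yields $\Con{\vp}{\Aut D_4}=\Con{\vp}{P}\sqcup\Con{\vp^{-1}}{P}$.

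Part~(2) I would deduce from the same key point: for $\ve \in \{\omega,\vp\}$ and any $p \in P$, the element $p^{-1}\ve^c p$ has order $3$, lies in the same coset $W(D_4)\omega^{\pm 1}$ as $\ve^c$, and has the same fixed-lattice rank; hence it is $W(D_4)$-conjugate to $\ve^c$, giving $\Con{\ve^c}{P} \subseteq \{y^{-1}\ve^c y \mid y \in W(D_4)\}$, while the reverse inclusion is trivial.

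The main obstacle is precisely the key point: that within the coset $W(D_4)\omega$ the two order-$3$ $W(D_4)$-conjugacy classes are separated by $\rank D_4^g$ alone. This amounts to a twisted-conjugacy classification in $W(D_4)$ — two order-$3$ elements $w\omega,\,w'\omega$ being $W(D_4)$-conjugate precisely when $w' = u^{-1} w\, {}^{\omega}u$ for some $u \in W(D_4)$ — which I would settle either by a direct finite verification with MAGMA (as in the proof of Lemma~\ref{lem:fixedr} for $E_6$), or conceptually by analysing the two geometric types: in the rank-$2$ case $g$ restricts to an order-$3$ isometry of a rank-$2$, necessarily hexagonal, orthogonal complement, while in the rank-$0$ case $g$ endows $D_4$ with the structure of a rank-$2$ Hermitian $\BZ[\zeta_3]$-lattice, the relevant structure being unique up to $W(D_4)$ in each case.
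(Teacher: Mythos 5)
Your proposal is correct in outline but is organized quite differently from the paper's proof. The paper obtains part (1) by computing the character table of $\Aut D_4$ with MAGMA (so ``exactly three'' order-$3$ classes is read off), separates $\psi$, $\omega$, $\vp$ by the two invariants $\rank D_4^{(\cdot)}$ and membership in $W(D_4)$, and splits into $P$-classes via $\Aut D_4=P\sqcup \rho P$ with $\rho$ the transposition diagram automorphism, exactly as you do with your $t$; for part (2) it treats $\omega$ as immediate from $P=\langle W(D_4),\omega\rangle$, and for $\vp$ it uses the MAGMA class size $\#\Con{\vp}{\Aut D_4}=16$, hence $\#\Con{\vp^c}{P}=8$, so that the conjugation action of $\langle\omega\rangle$ (order $3$, and $3\nmid 8$) on $\Con{\vp^c}{P}$ has a fixed point $\vp'$ commuting with $\omega$, forcing the $P$-class to be a single $W(D_4)$-orbit. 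You instead funnel everything through one statement, your ``key point'' that within each coset $W(D_4)\omega^{\pm1}$ the order-$3$ elements form exactly two twisted $W(D_4)$-classes separated by $\rank D_4^{g}\in\{0,2\}$, and your derivations of both parts from it are correct: the reduction of order-$3$ elements of $W(D_4)\cong\BZ_2^3{:}\FS_4$ to $3$-cycles via the trick of Lemma~\ref{lem:fixedr} works verbatim for $n=4$; the containment of all order-$3$ elements in $P$ via $\Aut D_4/W(D_4)\cong\FS_3$ is fine; and the $t$-conjugation bookkeeping together with the pre-lemma disjointness claim gives the three decompositions. One loose phrase: the specific coset ($W(D_4)\omega$ versus $W(D_4)\omega^{-1}$) is only a $P$-conjugacy invariant, not an $\Aut D_4$-invariant (conjugation by $t$ swaps the two), but you in fact only use the genuine $\Aut D_4$-invariants ``in $W(D_4)$ or not'' and the rank, plus the $t$-swap, so nothing breaks.

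The one genuine soft spot is the key point itself, which you defer to ``MAGMA or geometry.'' The MAGMA route is unobjectionable and epistemically on par with the paper's own two appeals to MAGMA, so I would not call it a gap; but the geometric sketch, as written, is an assertion rather than a proof: uniqueness of the hexagonal or Hermitian $\BZ[\zeta_3]$-structure naturally yields conjugacy by \emph{some} isometry of $D_4$, i.e.\ an element of $\Aut D_4$, whereas ``unique up to $W(D_4)$'' is strictly stronger. The bridge is an observation that both you and the paper underuse: for any $g\in W(D_4)\omega^{\pm1}$ the image of $g$ generates $P/W(D_4)\cong\BZ_3$, so $P=W(D_4)\langle g\rangle$, and writing $p=yg^{j}$ with $y\in W(D_4)$ gives $p^{-1}gp=v^{-1}gv$ with $v=g^{-j}yg^{j}\in W(D_4)$. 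Consequently (a) an intertwiner between two order-$3$ elements of the \emph{same} coset automatically lies in $P$ (its image in $\FS_3$ centralizes a $3$-cycle) and may then be replaced by one in $W(D_4)$, which is what your sketch needs, modulo checking the intertwiner does not invert; and (b) part (2) holds outright for both $\omega$ and $\vp$ by this one-line computation, with no input from the key point, from class sizes, or from the paper's fixed-point counting -- so both your route to part (2) and the paper's are heavier than necessary.
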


\begin{proof}
(1) First, we used the computer program ``MAGMA'' to obtain 
the character table of $\Aut D_{4}$, from which we see that 
$\Aut D_{4}$ has exactly three conjugacy classes of 
order $3$ elements in $\Aut D_{4}$; 
by \eqref{eq:D4-fixed}, we have 
$\Con{\omega}{\Aut D_4} \ne \Con{\vp}{\Aut D_4}$. 
Since $\Con{\psi}{\Aut D_4} \subset W(D_{4})$ 
(recall that $\psi \in W(D_{4}) \lhd \Aut D_{4}$), 
and since $\omega,\,\vp \not\in W(D_{4})$, 
it follows that $\Con{\psi}{\Aut D_4} \ne 
\Con{\omega}{\Aut D_4}$, $\Con{\vp}{\Aut D_4}$. 
Thus we conclude that 
$\Con{\psi}{\Aut D_4}$, 
$\Con{\omega}{\Aut D_4}$, and 
$\Con{\vp}{\Aut D_4}$ are distinct to each other. 
Also, by \eqref{eq:D4-fixed} and the fact that 
$\Con{\psi}{\Aut D_4} \subset W(D_{4})$, we see that 
$\vp^{-1} \in \Con{\vp}{\Aut D_4}$ and $\omega^{-1} \in \Con{\omega}{\Aut D_4}$. 

Let $\rho$ be the Dynkin diagram automorphism of $D_{4}$ which interchanges $\alpha_{3}$ and 
$\alpha_{4}$ with notation in \S\ref{subsec:root}. Then, $\Aut D_{4}=P \sqcup \rho P$, and hence 
\begin{equation*}
\Con{\ve}{\Aut D_4} = \Con{\ve}{P} \cup \Con{\rho^{-1}\ve\rho}{P}, 
\end{equation*}
where $\ve \in \bigl\{\psi,\,\omega,\,\vp\bigr\}$. 
If $\ve = \psi = r_{1}r_{2}$, then we see that $\rho^{-1} \psi \rho = \psi$, 
and hence $\Con{\psi}{P} = \Con{\rho^{-1}\psi\rho}{P}$. Thus we get
$\Con{\psi}{\Aut D_4} = \Con{\psi}{P}$. 
If $\ve = \omega$ or $\vp$, then we see that 
$\rho^{-1} \ve \rho \in W(D_{4})\omega^{-1}$ 
since $\rho^{-1} \omega \rho = \omega^{-1}$. 
Hence, by the argument preceding this lemma, 
$\ve$ is not conjugate to $\rho^{-1} \ve \rho$ in $P$, 
which implies that $\Con{\ve}{P} \ne \Con{\rho^{-1}\ve\rho}{P}$; 
\begin{equation*}
\Con{\ve}{\Aut D_4} = \Con{\ve}{P} \sqcup \Con{\rho^{-1}\ve\rho}{P}.
\end{equation*}
Since $\ve^{-1} \in \Con{\ve}{\Aut D_4}$ as seen above, and 
$\ve^{-1} \notin \Con{\ve}{P}$, it follows that 
$\ve^{-1} \in \Con{\rho^{-1}\ve\rho}{P}$, and hence 
$\Con{\rho^{-1}\ve\rho}{P} = \Con{\ve^{-1}}{P}$. Thus we have proved part (1). 

(2) If $\ve=\omega$, then the assertion is obvious 
since $P=\langle W(D_{4}),\,\omega\rangle$ and $W(D_{4}) \lhd P$. 
Assume that $\ve=\vp$. The cyclic group $\langle \omega \rangle$ (of order $3$) 
acts on $\Con{\vp^{c}}{P}$ by conjugation. 
Since $\# \Con{\vp}{\Aut D_4} = 16$ by the character table obtained by MAGMA, 
we see that $\# \Con{\vp^{c}}{P} = \# \Con{\vp}{\Aut D_4}/2 = 8$.
Hence there exists 
$\vp' \in \Con{\vp^{c}}{P}$ such that $\omega^{-1}\vp'\omega=\vp'$. 
Then we see that $\Con{\vp^{c}}{P}=\Con{\vp'}{P}=
\bigl\{y^{-1} \vp' y \mid y \in W(D_{4})\bigr\}$. 
Therefore we obtain 
$\Con{\vp^{c}}{P}=\bigl\{y^{-1} \vp^{c} y \mid y \in W(D_{4})\bigr\}$, 
as desired.
\end{proof}

%
\subsection{Some technical lemmas.}
\label{subsec:lemmas}

In this subsection, we show some basic lemmas, 
which will be needed in the proof of Theorem~\ref{thm:mainS3}.
%
%
\begin{lem} \label{lem:Z2}
Let $G$ be a finite group. 
If $G_{1} \lhd G$ and $|G_{1}|=2$, then 
$G_{1}$ is contained in the center of $G$. 
Moreover, the canonical projection 
$G \twoheadrightarrow G/G_{1}$ induces 
a bijection from the set of conjugacy classes 
of order $3$ elements in $G$ onto the ones in $G/G_{1}$.
\end{lem}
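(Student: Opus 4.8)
The plan is to isolate the two assertions and prove both by elementary order considerations, the common mechanism being that the nontrivial element of $G_{1}$ is a \emph{central} involution.

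For the first assertion I would write $G_{1}=\{1,z\}$ with $|z|=2$. For any $g \in G$, normality of $G_{1}$ gives $gzg^{-1} \in G_{1}=\{1,z\}$, and since conjugation preserves order we have $gzg^{-1}\neq 1$; hence $gzg^{-1}=z$ for every $g$, so $z \in Z(G)$ and $G_{1}\sbg Z(G)$. This centrality is what the rest of the argument relies on.

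For the second assertion, let $\pi\colon G \twoheadrightarrow G/G_{1}$ be the projection. Since $\pi$ is a homomorphism, it carries conjugate elements to conjugate elements, so to get the claimed map on conjugacy classes it suffices to check that $\pi$ sends order-$3$ elements to order-$3$ elements: if $|x|=3$ then $\pi(x)^{3}=1$, while $\pi(x)\neq 1$ because the order-$2$ group $G_{1}$ contains no element of order $3$; thus $|\pi(x)|=3$. This yields a well-defined map $\Con{x}{G}\mapsto\Con{\pi(x)}{G/G_{1}}$ from order-$3$ conjugacy classes of $G$ to those of $G/G_{1}$. For surjectivity I would lift: given $\bar y \in G/G_{1}$ of order $3$, pick any preimage $y$, so that $y^{3}\in G_{1}=\{1,z\}$; if $y^{3}=1$ then $y$ already has order $3$, and if $y^{3}=z$ then, using that $z$ is central with $z^{2}=1$, one computes $(yz)^{3}=y^{3}z^{3}=z\cdot z=1$ while $\pi(yz)=\bar y$ still has order $3$, so $yz$ has order $3$. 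Either way $\bar y$ lifts to an order-$3$ element.

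Injectivity is where the central involution is used again. Suppose $x_{1},x_{2}$ have order $3$ and $\pi(x_{1}),\pi(x_{2})$ are conjugate in $G/G_{1}$; lifting a conjugating element to $g \in G$, I get that $gx_{1}g^{-1}$ and $x_{2}$ lie in the same $G_{1}$-coset, i.e.\ $gx_{1}g^{-1}\in\{x_{2},\,x_{2}z\}$. But $x_{2}z$ has order $6$, since $z$ is a central involution commuting with the order-$3$ element $x_{2}$, whereas $gx_{1}g^{-1}$ has order $3$; this rules out the second case, forcing $gx_{1}g^{-1}=x_{2}$, so $x_{1}$ and $x_{2}$ are conjugate in $G$. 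Combined with surjectivity this gives the desired bijection. I expect no real obstacle here; the only point needing care is the order bookkeeping around $z$, namely that multiplication by the central involution exchanges orders $3$ and $6$ (via $y\mapsto yz$), which is exactly what drives both the lifting step of surjectivity and the coset exclusion in injectivity.
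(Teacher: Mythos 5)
Your proof is correct and complete: centrality of the involution $z$ follows from normality plus order-preservation under conjugation, and your order bookkeeping (in particular that $y\mapsto yz$ exchanges orders $3$ and $6$ when $z$ is a central involution) correctly handles both the lifting step for surjectivity and the coset exclusion for injectivity. The paper dismisses this lemma with ``An easy exercise'' and gives no argument of its own, so your write-up is precisely the standard proof the authors intended.
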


\begin{proof}
An easy exercise.
\end{proof}
%
%
\begin{lem}\label{lem:rank} 
Let $L$ be a lattice, let $\tau \in \Aut L$ be of order $3$, and 
let $Q$ be a sublattice of $L$ such that $\tau(Q)=Q$. 
\begin{enu}
\item If $\rank L=\rank Q$, 
then $\rank L^\tau=\rank Q^\tau$.

\item If there exist sublattices 
$R_{1}$, $R_{2}$, $R_{3}$, $R_{4}$ of $Q$ such that 
$Q=\bigoplus_{q=1}^{4} R_q$, and 
$\tau(R_{1})=R_{2}$, 
$\tau(R_{2})=R_{3}$, 
$\tau(R_{3})=R_{1}$, 
$\tau(R_{4})=R_{4}$, then 
$\rank Q^\tau=\rank R_1 + \rank R_{4}^{\tau}$. 
\end{enu}
\end{lem}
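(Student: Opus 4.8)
The plan is to reduce both parts to linear algebra over $\BR$ via the standard observation that, for a finite-order automorphism, the rank of a fixed-point lattice equals the dimension of the corresponding fixed real subspace. Concretely, for any lattice $M$ stable under $\tau$, I would first record the identity $\rank M^{\tau} = \dim_{\BR}(M \otimes_{\BZ}\BR)^{\tau}$. The inclusion $M^{\tau}\otimes_{\BZ}\BR \subseteq (M\otimes_{\BZ}\BR)^{\tau}$ is clear; for the reverse I would use the averaging projector $\pi := \tfrac{1}{3}(1 + \tau + \tau^{2})$, which maps $M\otimes_{\BZ}\BQ$ onto $(M\otimes_{\BZ}\BQ)^{\tau}$ and satisfies $3\pi(x) = x + \tau x + \tau^{2} x \in M^{\tau}$ for every $x \in M$. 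Hence $M^{\tau}$ spans $(M\otimes_{\BZ}\BR)^{\tau}$ over $\BR$, which gives the stated identity.

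With this identity in hand, part (1) is immediate: since $Q \subseteq L$ and $\rank Q = \rank L$, the inclusion $Q\otimes_{\BZ}\BR \subseteq L\otimes_{\BZ}\BR$ of spaces of equal dimension forces $Q\otimes_{\BZ}\BR = L\otimes_{\BZ}\BR$. As both $L$ and $Q$ are $\tau$-stable, taking $\tau$-fixed subspaces gives $(L\otimes_{\BZ}\BR)^{\tau} = (Q\otimes_{\BZ}\BR)^{\tau}$, and applying the identity to $M = L$ and $M = Q$ yields $\rank L^{\tau} = \rank Q^{\tau}$.

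For part (2), I would decompose $Q\otimes_{\BZ}\BR = V \oplus W$, where $V := (R_{1} \oplus R_{2} \oplus R_{3})\otimes_{\BZ}\BR$ and $W := R_{4}\otimes_{\BZ}\BR$; both are $\tau$-stable since $\tau$ permutes $R_{1},R_{2},R_{3}$ cyclically and fixes $R_{4}$, so $(Q\otimes_{\BZ}\BR)^{\tau} = V^{\tau} \oplus W^{\tau}$. The summand $W^{\tau}$ contributes $\dim_{\BR} W^{\tau} = \rank R_{4}^{\tau}$ by the identity above applied to $M = R_{4}$. For $V^{\tau}$, writing a vector as $v_{1} + v_{2} + v_{3}$ with $v_{i} \in R_{i}\otimes_{\BZ}\BR$ and using $\tau(R_{1}\otimes_{\BZ}\BR) = R_{2}\otimes_{\BZ}\BR$, $\tau(R_{2}\otimes_{\BZ}\BR) = R_{3}\otimes_{\BZ}\BR$, $\tau(R_{3}\otimes_{\BZ}\BR) = R_{1}\otimes_{\BZ}\BR$, the equation $\tau(v) = v$ reads, componentwise, $v_{1} = \tau(v_{3})$, $v_{2} = \tau(v_{1})$, and $v_{3} = \tau(v_{2})$. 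The first relation is then automatic from the other two because $\tau^{3} = 1$, so $v_{1} \in R_{1}\otimes_{\BZ}\BR$ is free and determines $v_{2} = \tau(v_{1})$ and $v_{3} = \tau^{2}(v_{1})$. Thus the projection $v \mapsto v_{1}$ is an isomorphism $V^{\tau} \xrightarrow{\sim} R_{1}\otimes_{\BZ}\BR$, giving $\dim_{\BR} V^{\tau} = \rank R_{1}$. Adding the two contributions yields $\rank Q^{\tau} = \rank R_{1} + \rank R_{4}^{\tau}$.

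The content here is elementary, so there is no substantial obstacle; the only point deserving care is the opening identity $\rank M^{\tau} = \dim_{\BR}(M\otimes_{\BZ}\BR)^{\tau}$, that is, verifying that $M^{\tau}$ has full rank inside the fixed real subspace rather than spanning a proper subspace of smaller dimension. Once the averaging projector $\pi$ is invoked this is routine, and both parts then follow by the bookkeeping above.
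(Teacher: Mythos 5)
Your proposal is correct and follows essentially the same route as the paper: part (1) via the identity $\rank M^{\tau} = \dim_{\BR}(M\otimes_{\BZ}\BR)^{\tau}$ (which the paper asserts without proof and you justify with the averaging projector), and part (2) by observing that the fixed points of the cyclically permuted part are exactly the averages $v_{1}+\tau(v_{1})+\tau^{2}(v_{1})$ parametrized by the first component. The only cosmetic difference is that the paper carries out part (2) integrally, exhibiting a $\BZ$-module isomorphism $R_{1}\to(R_{1}\oplus R_{2}\oplus R_{3})^{\tau}$, $x\mapsto x+\tau(x)+\tau^{2}(x)$, whereas you work after tensoring with $\BR$; the two are equivalent for the rank computation.
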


\begin{proof}
Part (1) can be easily verified as follows: 
$\rank L^{\tau} = \dim (L \otimes_{\BZ} \BR)^{\tau} = 
\dim (Q \otimes_{\BZ} \BR)^{\tau}=\rank Q^{\tau}$. 
Let us show part (2). Note that 
$(R_{1} \oplus R_{2} \oplus R_{3})^{\tau}=
 \bigl\{ x + \tau(x) + \tau^{2}(x) \mid x \in R_{1} \bigr\}$.
Thus we have an isomorphism of free $\BZ$-modules from 
$R_{1}$ onto $(R_{1} \oplus R_{2} \oplus R_{3})^{\tau}$ 
defined by: $x \mapsto x+\tau(x)+\tau^{2}(x)$; 
notice that this map does not preserve the $\BZ$-bilinear forms. 
Hence we obtain an isomorphism of free $\BZ$-modules 
from $Q^{\tau}$ onto $R_{1} \oplus R_{4}^{\tau}$, 
which implies that $\rank Q^\tau=\rank R_1 + \rank R_{4}^{\tau}$, 
as desired. 
\end{proof}
%
%
\begin{lem} \label{lem:conj}
Let $L$ be a Niemeier lattice, and let $Q$ be its root lattice.
Let $\tau \in \Aut L$ be of order~$3$.
Let $R_m$, $1\le m\le 4$, be root sublattices of $Q$ 
(not necessarily, indecomposable) such that
\begin{equation*}
Q=\bigoplus_{m=1}^{4} R_m, \qquad 
\tau (R_1)=R_2, \quad \tau (R_2)=R_3, \quad 
\tau (R_3)=R_1,\quad \tau (R_4)=R_4. 
\end{equation*}
Let $w \in W(R_1 \oplus R_2 \oplus R_3)=\prod_{m=1}^{3} W(R_m) \sbg \Aut L$.
If $w\tau$ is of order $3$, then $w\tau$ is conjugate to $\tau$. 
\end{lem}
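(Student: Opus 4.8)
The plan is to localize the problem to the first three summands and then solve a conjugation equation inside $\prod_{m=1}^{3} W(R_m)$ explicitly. Since $w$ acts trivially on $R_4$ and $\tau(R_4)=R_4$, both $\tau$ and $w\tau$ restrict to the same map on $R_4$; moreover I will conjugate by an element that is the identity on $R_4$, so it suffices to arrange $g\tau g^{-1}=w\tau$ on $S:=R_1\oplus R_2\oplus R_3$. To set up coordinates, I record $\tau|_S$ through the three lattice isometries $a:=\tau|_{R_1}:R_1\to R_2$, $b:=\tau|_{R_2}:R_2\to R_3$, and $c:=\tau|_{R_3}:R_3\to R_1$; since $|\tau|=3$ these satisfy the relation $cba=\id_{R_1}$. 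Writing $w=(w_1,w_2,w_3)$ with $w_m\in W(R_m)$, the map $w\tau$ sends $R_1\to R_2$ by $w_2a$, sends $R_2\to R_3$ by $w_3b$, and sends $R_3\to R_1$ by $w_1c$, so that $(w\tau)^3|_{R_1}=w_1c\,w_3b\,w_2a$; thus the hypothesis $|w\tau|=3$ is exactly the statement $w_1c\,w_3b\,w_2a=\id_{R_1}$.

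Next I would look for the conjugator in the form $g=(g_1,g_2,g_3)\in\prod_{m=1}^{3} W(R_m)\sbg G_{0}(L)\sbg\Aut L$ (extended by $\id$ on $R_4$). Restricting the desired identity $g\tau g^{-1}=w\tau$ to $R_1$, $R_2$, $R_3$ turns it into the system
\begin{equation*}
g_2\,a\,g_1^{-1}=w_2a,\qquad g_3\,b\,g_2^{-1}=w_3b,\qquad g_1\,c\,g_3^{-1}=w_1c,
\end{equation*}
equivalently $g_2=w_2\,(a g_1 a^{-1})$, $g_3=w_3\,(b g_2 b^{-1})$, and $g_1=w_1\,(c g_3 c^{-1})$. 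I would solve this by propagation: fix $g_1:=\id_{R_1}$ and define $g_2,g_3$ through the first two relations.

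Two verifications then remain, and the second is the crux. First, $g_2$ and $g_3$ genuinely lie in $W(R_2)$ and $W(R_3)$: because $a,b$ are lattice isometries they carry roots to roots, so $aW(R_1)a^{-1}=W(R_2)$ and $bW(R_2)b^{-1}=W(R_3)$, whence the conjugates $a g_1 a^{-1}$ and $b g_2 b^{-1}$ stay in the respective Weyl groups and multiplying by $w_2,w_3$ keeps us there. Second, and this is the main obstacle, one must check that the third relation $g_1=w_1(c g_3 c^{-1})$ is then automatically satisfied, i.e. that the cyclic substitution closes up. Substituting the formulas for $g_3$ and $g_2$ gives $w_1c\,g_3\,c^{-1}=(w_1c\,w_3b\,w_2a)\,g_1\,(a^{-1}b^{-1}c^{-1})$, and here both parenthesized factors collapse to the identity: the first by the order-$3$ hypothesis, and the second because $a^{-1}b^{-1}c^{-1}=(cba)^{-1}=\id_{R_1}$. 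Hence the third equation reduces to $g_1=g_1$ and the system is consistent. With such $g$ in hand, $g\tau g^{-1}$ agrees with $w\tau$ on $S$ and equals $\tau=w\tau$ on $R_4$, so $w\tau=g\tau g^{-1}$ with $g\in\Aut L$, proving that $w\tau$ is conjugate to $\tau$.
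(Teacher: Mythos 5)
Your proof is correct and takes essentially the same route as the paper's: both arguments produce an explicit conjugator in $\prod_{m=1}^{3} W(R_m)$ (acting trivially on $R_4$) whose existence rests exactly on the single relation extracted from $(w\tau)^3=1$. The only difference is cosmetic --- the paper verifies the guessed element $u=w_1w_2w_1^{\tau^2}$ by direct computation, whereas you solve the conjugation equations as a triangular system with the normalization $g_1=\id$, arriving at $g=(\id,\,w_2,\,w_3\,b\,w_2\,b^{-1})$, which differs from the paper's $u$ only by an element of the centralizer of $\tau$.
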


\begin{proof}
First, we make some remarks. 
It is obvious that for every $1 \le i,\,j \le 3$ with $i \ne j$, 
$w_{i}w_{j} = w_{j}w_{i}$ for all $w_{i} \in W(R_{i})$ and $w_{j} \in W(R_{j})$. 
Also, we see that $R_{1} \cong R_{2} \cong R_{3}$, and hence 
$W(R_{1}) \cong W(R_{2}) \cong W(R_{3})$. Remark that 
for $w_{i} \in W(R_{i})$, $i=1,\,2,\,3$, and $p \in \BZ_{\ge 0}$, 
the element $w_{i}^{\tau^{p}} := \tau^{-p} w_{i} \tau^{p}$ is contained in 
$W(R_{\ol{i-p}})$, 
where $\ol{\phantom{v}}:\BZ \twoheadrightarrow \BZ/3\BZ = 
\bigl\{1,\,2,\,3\bigr\}$ is the canonical projection.

Now, let $w \in W(R_1 \oplus R_2 \oplus R_3)=
\prod_{m=1}^{3} W(R_m)$ be such that $w\tau$ is of order $3$, 
and write it as: $w=w_1w_2w_3$ with $w_i \in W(R_i)$, $1 \le i \le 3$. 
Since $1=(w\tau)^3$, and $\tau^{3}=1$, we have 
\begin{align*}
1 & = (w\tau)(w\tau)(w\tau) 
    = (w_1w_2w_3\tau)(w_1w_2w_3\tau)(w_1w_2w_3\tau) \\
  & = (w_1w_2w_3\tau)(w_1w_2w_3\tau)\tau(w_1^{\tau}w_2^{\tau}w_3^{\tau}) \\
  & = (w_1w_2w_3\tau)\tau^{2}(w_1^{\tau^2}w_2^{\tau^2}w_3^{\tau^2})
      (w_1^{\tau}w_2^{\tau}w_3^{\tau}) \\
  & = (w_1w_2w_3)(w_1^{\tau^2}w_2^{\tau^2}w_3^{\tau^2})
      (w_1^{\tau}w_2^{\tau}w_3^{\tau}) \\
  & = \underbrace{(w_1w_3^{\tau^2}w_2^{\tau})}_{\in W(R_{1})} 
      \underbrace{(w_2w_1^{\tau^2}w_3^{\tau})}_{\in W(R_{2})}
      \underbrace{(w_3w_2^{\tau^2}w_1^{\tau})}_{\in W(R_{3})}.
\end{align*}
Thus, 
\begin{equation*}
\underbrace{w_{1}w_{3}^{\tau^2} w_{2}^\tau}_{\in W(R_{1})}=
\underbrace{w_{2}w_{1}^{\tau^{2}}w_{3}^{\tau}}_{\in W(R_{2})}=
\underbrace{w_{3}w_{2}^{\tau^{2}}w_{1}^{\tau}}_{\in W(R_{3})}=1.
\end{equation*}
Here we set $u:=w_1w_{2} w_1^{\tau^2} \in W(R_1 \oplus R_2 \oplus R_3)=
\prod_{m=1}^{3} W(R_m)$. Then we have
\begin{align*}
u \tau u^{-1} 
 & = (w_1w_{2} w_1^{\tau^2}) \tau (w_1w_{2} w_1^{\tau^2})^{-1} 
   = (w_1w_{2} w_1^{\tau^2}) \tau 
     \{(w_1^{\tau^2})^{-1} w_{2}^{-1}w_{1}^{-1}\} \\
 & = (w_1w_{2} w_1^{\tau^2}) \tau 
     (w_{3}^{\tau}w_{2})w_{2}^{-1}w_{1}^{-1} 
   = (w_1w_{2} w_1^{\tau^2}) \tau 
      w_{3}^{\tau}w_{1}^{-1} \\
 & = (w_1w_{2} w_1^{\tau^2}) w_{3} \tau w_{1}^{-1} \\
 & = (w_1w_{2}w_{3}) w_1^{\tau^2} \tau w_{1}^{-1} 
   \quad \text{since $w_1^{\tau^2} \in W(R_{2})$ and $w_{3} \in W(R_{3})$} \\
 & = w\tau. 
\end{align*}
Thus we have proved the lemma. 
\end{proof}
%
%
\subsection{Proof of Theorem \ref{thm:mainS3}\,(1).}
\label{subsec:main3}

Let $L=\Ni(Q)$ be a Niemeier lattice such that 
$Q \ne \{0\}$ (i.e., $L \ne \Lambda$).
Let $Q=\bigoplus_{m=1}^{n} Q_{m}$ be 
the decomposition of $Q$ into 
its indecomposable components.
Assume that $\Aut L$ has an element $\tau$ 
satisfying \eqref{eq:tau}. 
%
%
\begin{claim} \label{c:m2-1}
If $\tau \in G_{0}(L):G_{1}(L)$, that is, if
$\tau(Q_{m})=Q_{m}$ for all $1 \le m \le n$ 
(see (\ref{eq:KQ-L})), then $Q=D_{4}^{6}$.
\end{claim}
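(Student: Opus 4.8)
The plan is to localize the non-Weyl behavior of $\tau$ to a single indecomposable component and show that component must be $D_4$. First I would use the hypothesis $\tau(Q_m)=Q_m$ for all $m$ to write $\tau_m := \tau|_{Q_m} \in \Aut Q_m$, with $|\tau_m| \in \{1,3\}$ since $|\tau|=3$. Because $\tau \notin G_0(L)=\prod_{m=1}^{n} W(Q_m)$, there exists an index $m_0$ with $\tau_{m_0} \notin W(Q_{m_0})$; in particular $\tau_{m_0}$ is a genuine order-$3$ automorphism of the indecomposable root lattice $Q_{m_0}$ lying outside its Weyl group.

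Next I would determine which indecomposable $Q_{m_0}$ can carry such an automorphism. Writing $\Aut Q_{m_0}=W(Q_{m_0}):G_{1}(Q_{m_0})$, the condition $\tau_{m_0}\notin W(Q_{m_0})$ forces the image of $\tau_{m_0}$ in $G_{1}(Q_{m_0})$ to be nontrivial of order dividing $3$, hence of order exactly $3$. By Lemma~\ref{lem:fixedr}\,(1), every order-$3$ element of $\Aut R$ lies in $W(R)$ when $R$ is of type $A_{n}$ ($n\ge 2$), $D_{n}$ ($n\ge 5$), or $E_{6}$; moreover $\Aut A_{1}=W(A_{1})$ has no order-$3$ element, and $\Aut E_{7}=W(E_{7})$, $\Aut E_{8}=W(E_{8})$ (these Dynkin diagrams having no nontrivial symmetry). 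Thus $G_{1}(R)$ contains an element of order $3$ only for $R\cong D_{4}$, where $G_{1}(D_{4})\cong \FS_{3}$. Consequently $Q_{m_0}\cong D_{4}$, so $Q$ has at least one $D_{4}$ component.

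I would then invoke the classification of Niemeier lattices. Since the indecomposable components of a Niemeier root lattice all share a common Coxeter number, and $D_{4}$ has Coxeter number $6$ (shared only with $A_{5}$ among the $ADE$ types), a rank count $5a+4b=24$ with $b\ge 1$ leaves only $Q\cong D_{4}^{6}$ and $Q\cong A_{5}^{4}D_{4}$ (both appearing in Table~16.1 of \cite{CS}). Finally I would eliminate $A_{5}^{4}D_{4}$: applying the projection $K(Q)\cap \Aut L=G_{0}(L):G_{1}(L)\twoheadrightarrow G_{1}(L)$ of \eqref{eq:KQ-L} to $\tau$ produces an element of order $3$ (nontrivial since $\tau\notin G_{0}(L)$, of order dividing $3$), whereas $G_{1}(\Ni(A_{5}^{4}D_{4}))\cong \BZ_{2}$ by \eqref{table:G1G2} has no such element. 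Hence $Q\cong D_{4}^{6}$.

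I expect the main obstacle to be the second step: pinning down that $D_{4}$ is the \emph{unique} indecomposable root lattice whose automorphism group has an order-$3$ element outside the Weyl group. This is precisely where Lemma~\ref{lem:fixedr} does the heavy lifting, supplemented by the routine observations for $A_{1}$, $E_{7}$, $E_{8}$, whose diagram automorphism groups are trivial. Note that the rank-divisibility clause of \eqref{eq:tau} plays no role here; only $|\tau|=3$, $\tau\notin G_{0}(L)$, and the component-preservation hypothesis are used.
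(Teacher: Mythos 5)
Your proof is correct and follows essentially the same route as the paper: both arguments project $\tau$ to the $G_{1}$-part of the split extension to obtain an order-$3$ Dynkin diagram automorphism, identify $D_{4}$ as the unique indecomposable type admitting one, reduce to $Q=A_{5}^{4}D_{4}$ or $D_{4}^{6}$ via the Niemeier classification, and eliminate $A_{5}^{4}D_{4}$ because $G_{1}(L)\cong\BZ_{2}$ by table~\eqref{table:G1G2}. Your componentwise localization to a single $Q_{m_0}$ and the Coxeter-number rank count $5a+4b=24$ merely spell out details the paper handles by writing $\tau=\tau_{0}\tau_{1}$ globally and citing the list directly.
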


\noindent
{\it Proof of Claim~\ref{c:m2-1}.}
Write $\tau$ uniquely as: $\tau=\tau_{0}\tau_{1}$ 
with $\tau_{0} \in G_{0}(L)$ and $\tau_{1} \in G_{1}(L)$; 
notice that $|\tau_{1}|=1$ or $3$ since the Weyl group 
$G_{0}(L)$ is a normal subgroup of $\Aut L$. 
Because $\tau \notin G_{0}(L)$ by \eqref{eq:tau}, 
it follows immediately that 
$\tau_{1} \ne 1$, and hence $\tau_{1}$ is of order $3$.
Since $\tau_{1} \in G_{1}(L) \sbg
G_{1}(Q)=\prod_{m=1}^{n} G_{1}(Q_{m})$ (see \eqref{eq:G01Q}), 
there exists $1 \le m \le n$ such that $G_{1}(Q_{m})$ contains 
a Dynkin diagram automorphism of order $3$, 
which implies that $Q_{m}$ is of type $D_{4}$. 
Therefore we see from the list of Niemeier lattices 
(see \cite[Chapter~16, Table~16.1]{CS} for example) that 
$Q=A_5^4D_4$ or $Q=D_4^6$. Since $|G_1 (L)|=2$ 
if $Q=A_5^4D_4$ (see table \eqref{table:G1G2}), and since 
$\tau_{1} \in G_{1}(L)$ is of order $3$, 
we obtain $Q=D_4^6$, as desired. \bqed

\vsp

We next assume that $\tau \notin G_{0}(L):G_{1}(L)$, 
or equivalently, $\tau(Q_{m}) \ne Q_{m}$ for some $1 \le m \le n$.
Under the notation in Remarks~\ref{rem:G2} and \ref{rem:G2L}, 
$\Phi(\tau) \in \Sym \SC_{Q}$ 
acts on the set $\SC_{Q}=\bigl\{Q_{m} \mid 1 \le m \le n\bigr\}$ 
of the indecomposable components of $Q$ nontrivially, 
which implies that $\Phi(\tau)$ is of order $3$. 
Therefore there exist at least $3$ mutually isomorphic 
components in $\SC_Q$; by the list of Niemeier lattices 
\cite[Chapter~16, Table 16.1]{CS}, $Q$ is one of the following: 
\begin{equation*}
A_1^{24},\ A_2^{12},\ A_3^8,\ A_4^6,\ 
A_5^4D_4,\ D_4^6,\ A_6^4,\ A_8^3,\ 
D_6^4,\ E_6^4,\ D_8^3,\ E_8^3.
\end{equation*}
%
%
\begin{claim} \label{c:m2-2}
$Q$ cannot be $A_4 ^6$, $A_8 ^3$, $D_8 ^3$, nor $E_8 ^3$.
\end{claim}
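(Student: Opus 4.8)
The plan is to rule out each of the four candidate root lattices $A_4^6$, $A_8^3$, $D_8^3$, $E_8^3$ by deriving a contradiction from the assumption that $\Aut L$ contains an element $\tau$ satisfying \eqref{eq:tau} with $\Phi(\tau)$ acting as an order-$3$ permutation on $\SC_Q$. The key arithmetic obstruction should come from the constraint $\rank L^{\tau} \in 6\BZ$ together with Lemma~\ref{lem:rank}. Indeed, in each of these cases the indecomposable components come in groups of three isomorphic ones that $\Phi(\tau)$ permutes cyclically (possibly leaving some components fixed), so Lemma~\ref{lem:rank}\,(2) lets me compute $\rank Q^{\tau}$, and hence $\rank L^{\tau}$ via Lemma~\ref{lem:rank}\,(1), in terms of the ranks of the fixed part and the rank of one copy in each cyclically-permuted triple plus the fixed-point ranks of the components $\tau$ preserves.

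Concretely, I would proceed component-type by component-type. For $Q=A_8^3$, $D_8^3$, $E_8^3$ there are exactly three components, so $\Phi(\tau)$ must cyclically permute all three; then by Lemma~\ref{lem:rank} the fixed-point lattice $Q^{\tau}$ is isomorphic (as a $\BZ$-module) to a single copy $A_8$, $D_8$, or $E_8$, giving $\rank L^{\tau}=\rank Q^{\tau}=8$. Since $8 \notin 6\BZ$, this contradicts \eqref{eq:tau}. For $Q=A_4^6$ there are six components, and $\Phi(\tau)$ of order $3$ must act as either one $3$-cycle (fixing three components) or two $3$-cycles (fixing none). In the first case, writing the three cyclically-permuted copies as $R_1,R_2,R_3$ and grouping the three fixed copies into $R_4 \cong A_4^3$, Lemma~\ref{lem:rank}\,(2) gives $\rank Q^{\tau}=\rank A_4 + \rank (A_4^3)^{\tau|_{R_4}}$; in the second case I split $Q$ into two triples and apply Lemma~\ref{lem:rank}\,(2) to each. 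Using Lemma~\ref{lem:fixedr}, the restriction of $\tau$ to each fixed $A_4$ is either trivial or a single $3$-cycle, so each such component contributes rank $4$ or $2$ to $Q^{\tau}$; the cyclically-permuted part contributes $4$ per triple. I would then check that no combination of these contributions lands $\rank L^{\tau}$ in $6\BZ$.

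The step I expect to be the main obstacle is the $A_4^6$ case, since there $\rank L^{\tau}$ is \emph{not} forced to be a fixed value but ranges over several possibilities depending on how $\tau$ acts on the fixed components; I must verify that every admissible value avoids $6\BZ$. The clean way to organize this is to observe that $\rank Q^{\tau}$ is always congruent to a computable residue modulo the relevant modulus: each $3$-cycle acting on components contributes $4$, and each $A_4$-component fixed by $\Phi(\tau)$ contributes $4$ or $2$ to $\rank Q^{\tau}$ (both even and both $\equiv 1 \pmod 3$ among the relevant values, so neither is divisible by $3$ unless summed appropriately). Tracking $\rank L^{\tau}$ modulo $6$ (or separately modulo $2$ and modulo $3$) through all cases shows it can never be $0,6,12,$ or $18$, which is the required contradiction. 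The other three cases are immediate once the single value $8 \notin 6\BZ$ is computed, so the real work is confined to the bookkeeping for $A_4^6$.
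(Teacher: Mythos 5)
Your handling of $A_8^3$, $D_8^3$, $E_8^3$ matches the paper exactly and is fine. The genuine gap is in the $A_4^6$ case, in precisely the subcase you flagged as ``bookkeeping'': if $\Phi(\tau)$ acts as a single $3$-cycle fixing three components, the rank arithmetic does \emph{not} produce a contradiction. The cyclically permuted triple contributes $\rank A_4 = 4$, and by Lemma~\ref{lem:fixedr} each fixed $A_4$-component contributes $4$ (if $\tau$ restricts trivially) or $2$ (if it restricts to an order-$3$ element), so $\rank L^{\tau} \in \{4+6,\,4+8,\,4+10,\,4+12\} = \{10,\,12,\,14,\,16\}$. The value $4+4+2+2=12$ lies in $6\BZ$, so your claim that tracking residues modulo $6$ rules out every combination is false (note also your parenthetical is already off: $2 \equiv 2 \pmod 3$, not $1$). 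No purely arithmetic argument of this kind can close the single-$3$-cycle subcase.

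What is actually needed, and what the paper uses, is group-theoretic input on which permutations of $\SC_Q$ can arise at all: for $Q = A_4^6$ one has $\Phi(\Aut L) \cong G_2(L) \cong PGL_2(5) \ (\cong \FS_5)$ acting on the six components as linear fractional transformations of the projective line over $\BF_5$, and in this action every order-$3$ element is fixed-point-free, acting as a product of two commuting $3$-cycles (e.g.\ $(\infty 0 1)(2 4 3)$). Hence the single-$3$-cycle subcase never occurs in $\Aut L$, and only your two-$3$-cycles computation $\rank L^{\tau} = \rank A_4^2 = 8 \notin 6\BZ$ is needed. Your proposal is missing this ingredient --- the constraint that $\Phi(\Aut L)$ is the image of the glue-code-preserving group $G_2(L)$, not all of $\Sym \SC_Q$ --- and without it the proof cannot be completed.
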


\noindent
{\it Proof of Claim~\ref{c:m2-2}.}
Suppose first that 
$Q=X^{3}$ with $X = A_{8}$, $D_{8}$, or $E_{8}$. 
Because $\Phi(\tau) \in \Sym \SC_{Q}$ is of order $3$, 
we see that $\tau$ cyclically permutes 
the $3$ indecomposable components 
$Q_{1}$, $Q_{2}$, $Q_{3}$ of $Q$ as:
$\tau(Q_{1})=Q_{2}$, $\tau(Q_{2})=Q_{3}$, 
$\tau(Q_{3})=Q_{1}$. Therefore it follows immediately 
from Lemma \ref{lem:rank} that 
$\rank L^\tau = \rank Q^{\tau} = \rank X = 8$, 
which contradicts the fact that $\rank L^{\tau}$ is divisible by $6$. 

Suppose next that $Q=A_4^6$. 
By \cite[p.\,408]{CS}, 
$\Phi(\Aut L) \cong G_{2} (L) \cong PGL_2(5) \ (\cong \FS_{5})$, 
where $PGL_{2}(5)$ is the projective general linear group of degree $2$ 
over $\BF_{5}$, and it acts 
on the set $\SC_{Q} \cong \bigl\{\infty,\,0,\,1,\,2,\,3,\,4\bigr\}$ as 
linear fractional transformations (see also \cite[Chapter~10, \S1]{CS}); 
note that $PGL_2(5) \ (\cong \FS_{5})$ has a unique conjugacy class 
of order $3$ elements, and 
$\left(\begin{smallmatrix} 0 & -1 \\ 1 & -1 \end{smallmatrix}\right) \, 
\mathrm{mod} \ \BF_{5}^{\times}$ 
is a representative for it, which acts on $\SC_{Q}$ as $(\infty01)(243)$. 
Since $\Phi(\tau) \in \Sym \SC_{Q}$ is of order $3$, 
it follows immediately that $\Phi(\tau)$ is conjugate to 
$\left(\begin{smallmatrix} 0 & -1 \\ 1 & -1 \end{smallmatrix}\right)\, 
\mathrm{mod} \ \BF_{5}^{\times}$, and hence acts on $\SC_{Q}$ 
as a product of two mutually commutative $3$-cycles. 
Therefore, $Q$ has $3$ components 
$R_{1}$, $R_{2}$, $R_{3}$ of type $A_{4}^{2}$, 
which are cyclically permuted by $\tau$ as:
$\tau(R_{1})=R_{2}$, $\tau(R_{2})=R_{3}$, 
$\tau(R_{3})=R_{1}$. 
Therefore it follows immediately 
from Lemma \ref{lem:rank} that 
$\rank L^\tau=\rank A_{4}^{2}=8$, 
which contradicts \eqref{eq:tau}. \bqed

\vsp

Thus we have proved Theorem \ref{thm:mainS3}\,(1). 

%
\subsection{Proof of Theorem \ref{thm:mainS3}\,(2): 
Case that $Q \ne D_{4}^{6}$.}
\label{subsec:main32}

If $L$ is the Leech lattice $\Lambda$, then 
$\Aut L = \Aut \Lambda$ is of type $2.\Co_1$
(see \S\ref{subsec:Niemeier}). 
We know from \cite[$\Co_{1}$]{ATLAS} that 
$\Co_{1}$ has exactly 4 conjugacy classes 
$3A$, $3B$, $3C$, and $3D$ of order $3$ elements 
with notation therein. 
Hence, by Lemma~\ref{lem:Z2}, 
$\Aut \Lambda$ has exactly 
4 conjugacy classes of order $3$ elements, 
which we denote also by $3A$, $3B$, $3C$, and $3D$.
We deduce from the character table in 
\cite[$\Co_{1}$]{ATLAS} that 
if $\tau \in 3A$ (resp., $3B$, $3C$, $3D$) 
in $\Aut \Lambda \cong 2.\Co_{1}$, then 
$\rank \Lambda^\tau = 0$ (resp., $12$, $6$, $8$). 
Thus we obtain the following proposition. 
%
%
\begin{prop} \label{prop:Leech}
An automorphism $\tau \in \Aut \Lambda$ satisfies (\ref{eq:tau}) 
if and only if it is contained 
in the conjugacy classes 
$3A$, $3B$, or $3C$ in $\Aut \Lambda \cong 2.\Co_{1}$. 
Moreover, if $\tau \in 3A$ (resp., $3B$, $3C$), 
then $\rank \Lambda^\tau = 0$ (resp., $12$, $6$). 
\end{prop}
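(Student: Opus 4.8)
The plan is to reduce the entire statement to the known structure of $\Aut\Lambda\cong 2.\Co_1$ together with the \textsf{ATLAS} data for $\Co_1$, which the preceding paragraphs have already assembled. First I would invoke Lemma~\ref{lem:Z2}: since $\Aut\Lambda$ is a nonsplit extension $2.\Co_1$, the central subgroup of order $2$ lies in the center, and the canonical projection $\Aut\Lambda\twoheadrightarrow\Co_1$ induces a bijection on conjugacy classes of order-$3$ elements. Thus the four classes $3A,3B,3C,3D$ in $\Co_1$ lift to exactly four classes of order-$3$ elements in $\Aut\Lambda$, which we label by the same symbols. This is the structural backbone of the argument, and it is already spelled out in the excerpt immediately before the proposition.

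Next I would read off the fixed-point ranks from the character table of $\Co_1$ in \cite{ATLAS}. The key point is that for $\tau\in\Aut\Lambda$ of order $3$, the rank of $\Lambda^\tau$ equals the dimension of the fixed space of $\tau$ acting on $\Lambda\otimes_\BZ\BR$, which is the multiplicity of the trivial character in the $24$-dimensional representation afforded by $\Lambda$; equivalently it is computable from the character value $\chi(\tau)$ via $\rank\Lambda^\tau=\tfrac{1}{3}\bigl(24+2\chi(\tau)\bigr)$, where $\chi$ is the $24$-dimensional character. Carrying this out for the four classes yields $\rank\Lambda^\tau=0,12,6,8$ for $\tau\in 3A,3B,3C,3D$ respectively, exactly as stated in the paragraph preceding the proposition. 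At this stage everything needed is in hand.

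Finally I would assemble the proposition itself. Recall that for $L=\Lambda$ we have $Q=\{0\}$, so the root lattice is trivial and $G_0(L)=G_0(Q)=\{1\}$; hence the condition $\tau\notin G_0(L)$ in \eqref{eq:tau} is automatic for any $\tau\ne 1$, and in particular for every order-$3$ element. Thus $\tau$ satisfies \eqref{eq:tau} precisely when $|\tau|=3$ and $\rank\Lambda^\tau\in 6\BZ$. Comparing against the four computed ranks, the values $0,12,6$ (for $3A,3B,3C$) are divisible by $6$, whereas $8$ (for $3D$) is not. Therefore $\tau$ satisfies \eqref{eq:tau} if and only if $\tau\in 3A\cup 3B\cup 3C$, and the displayed rank assignments follow immediately.

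I do not expect a genuine obstacle here, since the proposition is essentially a bookkeeping consequence of results already established in the text; the only point requiring care is the correct normalization relating the character value $\chi(\tau)$ to $\rank\Lambda^\tau$, and the verification that $G_0(\Lambda)$ is trivial so that the third condition in \eqref{eq:tau} imposes nothing. The mildly delicate step is simply making sure the four \textsf{ATLAS} ranks $0,12,6,8$ are read off from the right character and attached to the right classes; everything else is arithmetic modulo $6$.
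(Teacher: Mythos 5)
Your proposal is correct and takes essentially the same route as the paper: the paper's proof likewise applies Lemma~\ref{lem:Z2} to transfer the four order-$3$ classes $3A$--$3D$ from $\Co_{1}$ to $\Aut\Lambda \cong 2.\Co_{1}$, reads off $\rank\Lambda^{\tau}=0,12,6,8$ from the \textsf{ATLAS} character table, and concludes by divisibility by $6$, the condition $\tau\notin G_{0}(L)$ being vacuous since $Q=\{0\}$. Your explicit normalization $\rank\Lambda^{\tau}=\tfrac{1}{3}\bigl(24+2\chi(\tau)\bigr)$ is correct (real eigenvalue bookkeeping for an order-$3$ element on a $24$-dimensional real space) and merely makes explicit what the paper cites from the table.
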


Next, let us consider the case that 
$Q$ is neither $\{0\}$ nor $D_{4}^{6}$. 
In these cases, we see from Theorem~\ref{thm:mainS3}\,(1) 
that if $\tau \in \Aut L$ satisfies \eqref{eq:tau}, then 
$\Phi(\tau) \in \Phi(\Aut L) \ (\cong G_{2}(L)) \sbg \Sym \SC_{Q}$ 
acts on the set $\SC_{Q}=\bigl\{Q_{m} \mid 1 \le m \le n\bigr\}$ 
of indecomposable components of $Q=\bigoplus_{m=1}^{n} Q_{m}$ 
nontrivially, and hence $\Phi(\tau)$ is of order $3$ 
(see Remarks~\ref{rem:G2} and \ref{rem:G2L}, along with \eqref{eq:AutQ}).
%
%
\begin{prop} \label{prop:autoA5} 
Let $L=\Ni(Q)$ be the Niemeier lattice with $Q=A_{5}^{4}D_{4}$. 
Enumerate $\SC_{Q}=\bigl\{Q_{m} \mid 1 \le m \le 5\bigr\}$ as 
$Q_{1} \cong Q_{2} \cong Q_{3} \cong Q_{4} \cong A_{5}$ and 
$Q_{5} \cong D_{4}$.

\begin{enu}

\item
If $\tau \in \Aut L$ satisfies \eqref{eq:tau}, 
then $\Phi(\tau) \in \Sym \SC_{Q}$ fixes 
$Q_{5} \in \SC_{Q}$, and acts on 
$\bigl\{Q_{m} \mid 1 \le m \le 4\bigr\} \subset \SC_{Q}$ 
as a $3$-cycle. If $Q_{k}$ is a unique element in 
$\bigl\{Q_{m} \mid 1 \le m \le 4\bigr\}$ fixed by $\Phi(\tau)$, 
then either (a) or (b) below holds:
\begin{enumerate}[\rm (a)]
\item
$\rank Q_{k}^{\tau} = 1$, 
$\tau|_{Q_{5}} \in \Aut D_{4}$ 
is conjugate to $\vp$ in $\Aut D_{4}$, and $\rank L^{\tau}=6$; 
\item
$\tau|_{Q_{k}} = \id$, 
$\tau|_{Q_{5}} \in \Aut D_{4}$ is conjugate to $\omega$ 
in $\Aut D_{4}$, and $\rank L^{\tau}=12$.
\end{enumerate}

\item
We have $\#\CC_{0}=\#\CC_{18}=0$, and 
$\#\CC_{6}=\#\CC_{12}=1$.
\end{enu}
\end{prop}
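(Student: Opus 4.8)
The plan is to prove Proposition~\ref{prop:autoA5} by first pinning down the permutation action $\Phi(\tau)$ on $\SC_{Q}$, then analyzing the possible components $\tau|_{Q_{m}}$ on each indecomposable piece, and finally counting conjugacy classes via Lemma~\ref{lem:conj}. For part~(1), since $\tau$ satisfies \eqref{eq:tau} and $Q = A_{5}^{4}D_{4}$ is not $D_{4}^{6}$, Theorem~\ref{thm:mainS3}\,(1) forces $\Phi(\tau)$ to act nontrivially, hence as an order-$3$ element of $\Sym\SC_{Q}$. Because the unique $D_{4}$-component $Q_{5}$ is not isomorphic to any $A_{5}$-component, $\Phi(\tau)$ must fix $Q_{5}$, and an order-$3$ permutation of the four mutually isomorphic $A_{5}$-components must be a single $3$-cycle fixing exactly one component, say $Q_{k}$. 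First I would record this, then apply Lemma~\ref{lem:rank}\,(2) with $R_{1}, R_{2}, R_{3}$ the three cyclically permuted $A_{5}$'s and $R_{4} = Q_{k} \oplus Q_{5}$, giving $\rank Q^{\tau} = \rank A_{5} + \rank Q_{k}^{\tau} + \rank Q_{5}^{\tau} = 5 + \rank Q_{k}^{\tau} + \rank Q_{5}^{\tau}$; since $\rank L^{\tau} = \rank Q^{\tau} \in 6\BZ$ by Lemma~\ref{lem:rank}\,(1), this constrains the pair $(\rank Q_{k}^{\tau}, \rank Q_{5}^{\tau})$.

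The next step is to enumerate the allowed values. Since $\tau|_{Q_{k}} \in \Aut A_{5}$ has order dividing $3$, Lemma~\ref{lem:fixedr} gives $\rank Q_{k}^{\tau} \in \{5, 3, 1\}$ (fixing, one $3$-cycle, or two $3$-cycles in $\FS_{6}$), and $\tau|_{Q_{5}} \in \Aut D_{4}$ has $\rank Q_{5}^{\tau} \in \{0, 2\}$ by \eqref{eq:D4-fixed} and Lemma~\ref{lem:D4}. The divisibility by $6$ of $5 + \rank Q_{k}^{\tau} + \rank Q_{5}^{\tau}$ singles out exactly the two cases $(\rank Q_{k}^{\tau}, \rank Q_{5}^{\tau}) = (1, 0)$, giving $\rank L^{\tau} = 6$, and $(\rank Q_{k}^{\tau}, \rank Q_{5}^{\tau}) = (5, 2)$, giving $\rank L^{\tau} = 12$; here $\rank Q_{5}^{\tau} = 0$ means $\tau|_{Q_{5}}$ is conjugate to $\vp$ and $\rank Q_{5}^{\tau} = 2$ means it is conjugate to $\omega$ (not $\psi$, since $\psi \in W(D_{4})$ would have to be excluded—I would check this against the glue-code stability, or note $\tau|_{Q_5}\notin W(D_4)$ is forced). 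The case $(5,2)$ means $\tau|_{Q_{k}} = \id$, yielding conclusion~(b), and $(1,0)$ yields conclusion~(a). The step requiring genuine care here is confirming that the glue-code constraint (stability of $L/Q$ under $\tau$) is actually compatible with each surviving numerical case, i.e.\ that such $\tau$ exist at all and that no further arithmetic cases slip through; this is where I expect to lean on \cite[\S7.2]{SS} and the explicit description of $\sigma_{5}$.

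For part~(2), the goal is to show $\#\CC_{6} = \#\CC_{12} = 1$ and $\#\CC_{0} = \#\CC_{18} = 0$. The vanishing of $\#\CC_{0}$ and $\#\CC_{18}$ is immediate from part~(1), which shows $\rank L^{\tau} \in \{6, 12\}$ for every $\tau$ satisfying \eqref{eq:tau}. For the two nonzero cases, I would fix representatives: within each case the permutation $\Phi(\tau)$ is determined up to conjugacy in $G_{2}(L) = \FS_{4}$ (which has a single class of $3$-cycles), so after conjugating by an element of $\Aut L$ inducing a suitable permutation I may assume $\Phi(\tau)$ is a fixed standard $3$-cycle and $Q_{k}$ is a fixed component. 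The heart of the uniqueness argument is then Lemma~\ref{lem:conj}: given two automorphisms $\tau, \tau'$ with the same $\Phi$-image and the same data on $Q_{k}$ and $Q_{5}$, I would write $\tau' = w\tau$ with $w \in W(R_{1}\oplus R_{2}\oplus R_{3}) \times (\text{Weyl part on } Q_k\oplus Q_5)$ and reduce, using that on the $D_{4}$-factor Lemma~\ref{lem:D4}\,(2) collapses conjugacy in $P$ to conjugacy under $W(D_{4})$, and on the $A_{5}$-factors Lemma~\ref{lem:fixedr}\,(2) gives $W$-conjugacy once the fixed-rank matches.

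The main obstacle, as I see it, is the bookkeeping in this last reduction: Lemma~\ref{lem:conj} as stated handles the $w$-part living in $W(R_{1}\oplus R_{2}\oplus R_{3})$ on the three cyclically permuted components, but I must separately absorb the Weyl-group ambiguity on the fixed block $R_{4} = Q_{k}\oplus Q_{5}$ and on the diagonal action, and reconcile the two-sided conjugation so that everything is realized by a single element of $\Aut L$ rather than of $\Aut Q$ (recall $\Aut L \subsetneq \Aut Q$, so not every Weyl or diagram element preserves the glue code). Concretely, the delicate point is verifying that the conjugating elements produced by Lemmas~\ref{lem:conj}, \ref{lem:fixedr}, and \ref{lem:D4} can all be taken inside $\Aut L$; I expect this to follow because $G_{0}(L) = G_{0}(Q) \subseteq \Aut L$ contains all the relevant Weyl elements, but it must be checked that the $D_{4}$-conjugation in Lemma~\ref{lem:D4}\,(2), being by $W(D_{4}) \subseteq G_{0}(L)$, indeed lands in $\Aut L$. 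Once this is settled, each of $\CC_{6}$ and $\CC_{12}$ contains exactly one class, completing the proof.
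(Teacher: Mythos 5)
Your overall strategy matches the paper's, but there is a genuine gap in part (1). Your enumeration ``$\rank Q_{5}^{\tau}\in\{0,2\}$'' is incorrect: although $\tau$ has order $3$, the restriction $\tau|_{Q_{5}}$ only has order dividing $3$, so $\tau|_{Q_{5}}=\id$ with $\rank Q_{5}^{\tau}=4$ is a priori possible (the paper's own display lists $\rank Q_{5}^{\tau}=0,\,2,\,\text{or}\,4$). With $\rank Q_{k}^{\tau}\in\{1,3,5\}$ and $\rank Q_{5}^{\tau}\in\{0,2,4\}$, the divisibility condition $5+\rank Q_{k}^{\tau}+\rank Q_{5}^{\tau}\in 6\BZ$ admits \emph{three} solutions, not two: $(1,0)$, $(5,2)$, and also $(3,4)$, i.e.\ $\tau|_{Q_{k}}$ a single $3$-cycle and $\tau|_{Q_{5}}=\id$, which has $\rank L^{\tau}=12$ but fits neither (a) nor (b). So the arithmetic does not ``single out exactly the two cases''; moreover in the case $(5,2)$ you must rule out $\tau|_{Q_{5}}$ conjugate to $\psi$ rather than $\omega$. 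Both exclusions are exactly what the paper's Claim ($\tau|_{Q_{5}}\notin W(D_{4})$) provides, and it is not a routine check: it is proved by an explicit glue-code computation, showing that the $\Aut L$-stable subset $B=\bigl\{[a_{1}\cdots a_{5}]\in L/Q \mid a_{m}\in\{0,3\},\ 1\le m\le 4\bigr\}$ would force an order-$3$ automorphism with $\tau|_{Q_{5}}\in W(D_{4})$ (which preserves the fifth glue coordinate) to fix $[33001]$ and $[00331]$, contradicting the $3$-cycle action of $\Phi(\tau)$ on the $A_{5}$-components. Your parenthetical ``I would check this against the glue-code stability'' names the right mechanism but supplies no argument, and your fallback of leaning on the explicit $\sigma_{5}$ of \cite[\S7.2]{SS} addresses existence, not this exclusion; since part (2)'s counts $\#\CC_{6}=\#\CC_{12}=1$ rest on the dichotomy (a)/(b), the gap propagates.

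A secondary weakness is in part (2): reducing two automorphisms to ``$\tau'=w\tau$ with $w$ in the Weyl part'' after matching $\Phi$-images ignores the kernel $G_{1}(L)\cong\BZ_{2}$ of $\pi_{2}$; automorphisms with equal $\Phi$-images can still differ by a nontrivial diagram-automorphism component. The paper first conjugates the $H(L)$-parts $\tau_{H},\tau'_{H}$ to each other inside $H(L)$, using the exactness of \eqref{eq:exact} together with Lemma~\ref{lem:Z2} (applicable precisely because $|G_{1}(L)|=2$), and only then normalizes the $G_{0}(L)$-parts via Lemma~\ref{lem:D4}\,(2) on $Q_{5}$, Lemma~\ref{lem:conj} on the cyclically permuted $A_{5}$'s, and Lemma~\ref{lem:fixedr}\,(2) on $Q_{k}$ --- the same three tools you list, and your worry about conjugators lying in $\Aut L$ is indeed resolved as you expect, since $W(Q)=G_{0}(L)\sbg\Aut L$ and $H(L)\sbg\Aut L$. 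You should also exhibit a rank-$12$ witness for $\#\CC_{12}\ge 1$ (the paper takes $\psi^{-1}w\sigma_{5}$ with $w\in W(D_{4})$ satisfying $w\vp=\omega$). But the decisive missing piece remains the glue-code Claim in part (1).
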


\begin{proof}
(1) It is obvious that $\Phi(\tau) \in \Sym \SC_{Q}$ fixes $Q_{5} \in \SC_{Q}$. 
Since $G_2 (L)$ is isomorphic to $\FS_4$ by table~\eqref{table:G1G2}, 
it follows that $\Phi(\tau) \in \Sym \SC_{Q}$ acts on 
$\bigl\{Q_{m} \mid 1 \le m \le 4\bigr\} \subset \SC_{Q}$ as a $3$-cycle. 
We see from Lemma~\ref{lem:rank}, 
along with Lemmas~\ref{lem:fixedr}, \ref{lem:D4}\,(1), and \eqref{eq:D4-fixed}, that
\begin{equation*}
6\BZ \ni 
\rank L^{\tau} = \rank Q^{\tau} = 
\underbrace{\rank Q_{k}^{\tau}}_{=1,\,3,\,\text{or}\,5} + 
\underbrace{\rank Q_{l}}_{=5} + 
\underbrace{\rank Q_{5}^{\tau}}_{=0,\,2,\,\text{or}\,4},
\end{equation*}
where $1 \le l \le 4$ with $l \ne k$. 

\begin{claim*}
$\tau|_{Q_{5}} \in \Aut D_{4}$ is not contained in $W(D_{4})$.
\end{claim*}

\noindent
{\it Proof of Claim.}
We use the glue code as in \S\ref{subsec:s5}; 
here, for simplicity of notation, we write the coset 
$[a_1a_2\cdots a_5]+Q$ as $[a_1a_2 \cdots a_5]$. 
By \S\ref{subsec:s5}, the glue code $L/Q$ consists of 
\begin{equation*}
s_1 [33001] + s_2 [30302] + s_3[30033] + 
t_1 [20240] + t_2 [22400] + t_3[24020]
\end{equation*}
with $0 \le s_1,\,s_2,\,s_3 \le 1$ and 
$0 \le t_1,\,t_2,\,t_3 \le 2$. We see that 
\begin{align*}
B & :=\bigl\{ [a_1a_2 \cdots a_5] \in L/Q \mid 
  \text{$a_{m} \in \bigl\{0,\,3\bigr\}$ for all $1 \le m \le 4$}\bigr\} \\
  & = 
  \bigl\{ s_1 [33001] + s_2 [30302] + s_3[30033] \mid 
    0 \le s_1,\,s_2,\,s_3 \le 1
  \bigr\} \\
  & =
  \bigl\{ [00000], \, [33001], \, [30302], \, [30033], \, 
  [03303], \, [03032], \, [00331], \, [33330]
  \bigr\}.
\end{align*}
Observe that the subset $B$ above is stable under the induced action of 
$\Aut L$ on $L/Q$, because the induced action of 
$\Aut A_{5}$ on $A_{5}^{\ast}/A_{5}$ 
fixes $\ol{[0]},\,\ol{[3]} \in A_{5}^{\ast}/A_{5}$ 
(see \S\ref{subsec:root} for the notation). 

Now, suppose, for a contradiction, that $\tau|_{Q_{5}} \in W(D_{4})$. 
Then the induced action of $\tau$ on $L/Q$ does not change 
the fifth entry of any $[a_1a_2 \cdots a_5] \in L/Q$. 
Since the subset $B$ above is stable under the induced action of $\Aut L$ on $L/Q$
as mentioned above, it follows that $\tau [33001]$ is equal to 
an element of $B$ whose fifth entry is equal to $1$. 
Namely, $\tau [33001] = [33001]$ or $[00331]$. Similarly, 
$\tau [00331] = [33001]$ or $[00331]$. 
Hence the subset $\bigl\{[33001],\,[00331]\bigr\}$ of $B$ 
is stable under the action of $\tau$. 
Since $|\tau|=3$, it follows immediately that 
$\tau$ fixes both $[33001]$ and $[00331]$. 
However, this contradicts the fact that $\Phi(\tau)$ acts on 
$\bigl\{Q_{m} \mid 1 \le m \le 4\bigr\} \subset \SC_{Q}$ as a $3$-cycle. 
Thus we have proved Claim. \bqed

\vsp 

By Claim and Lemma~\ref{lem:D4}\,(1), 
$\tau|_{Q_{5}}$ is conjugate to 
either $\vp$ or $\omega$ in $\Aut D_{4}$. 
If $\tau|_{Q_{5}}$ is conjugate to $\vp$ (resp., $\omega$), 
then $\rank Q_{5}^{\tau}=0$ (resp., $=2$), 
and hence $\rank Q_{k}^{\tau}=1$ (resp., $=5$) 
since $\rank L^{\tau} \in 6\BZ$. 
Thus we have proved part (1). 

(2) Part (1) implies that $\#\CC_{0}=\#\CC_{18}=0$. 
Let us show that $\#\CC_{6}=\#\CC_{12}=1$. 
We know from Remark~\ref{rem:sigma} that 
$\sigma_{5} \in \Aut L$ as in \S\ref{subsec:s5} 
satisfies \eqref{eq:tau} with $\rank L^{\sigma_{5}}=6$, 
which implies that $\#\CC_{6} \ge 1$. 
Recall from \S\ref{subsec:s5} that 
$\sigma_{5}$ fixes $Q_{1} \cong A_{5}$ and 
$Q_{5} \cong D_{4}$, and 
permutes $Q_{2} \cong Q_{3} \cong Q_{4} \cong A_{5}$ 
cyclically, and that 
$\sigma_{5}|_{Q_{1}} = \psi \in W(A_{5}) \cong W(Q_{1})$ 
and $\sigma_{5}|_{Q_{5}} = \vp \in W(D_{4}) \omega$. 
Let $w \in W(D_{4}) \cong W(Q_{5})$ 
be such that $w\vp = \omega$.
Then we see that $\sigma:=\psi^{-1} w \sigma_{5} \in \Aut L$ 
satisfies \eqref{eq:tau} with $\rank L^{\tau} = 12$. 
Thus we get $\#\CC_{12} \ge 1$.

Next, we show $\#\CC_{6}=1$ and $\# \CC_{12}=1$; 
we give a proof only for $\#\CC_{6}=1$ since $\#\CC_{12}=1$
can be shown similarly. 
Assume that $\tau,\,\tau' \in \Aut L$ satisfy 
\eqref{eq:tau} with $\rank L^{\tau}=\rank L^{\tau'}=6$. 
Write $\tau$ and $\tau'$ as: 
$\tau=\tau_{0}\tau_{H}$ and $\tau'=\tau'_{0}\tau'_{H}$ 
with $\tau_{0},\,\tau'_{0} \in G_{0}(L)$ and 
$\tau_{H},\,\tau'_{H} \in H(L)$ (see \eqref{eq:HL});
remark that $|\tau_{H}|=|\tau_{H}'|=3$ and 
$|\pi_{2}(\tau_{H})|=|\pi_{2}(\tau'_{H})|=3$.
Because $G_{2}(L) \cong \FS_{4}$ has a unique conjugacy class 
consisting of order $3$ elements, $\pi_{2}(\tau_{H})$ is conjugate to 
$\pi_{2}(\tau'_{H})$ in $G_{2}(L)$. 
Since $G_{1}(L) \cong \BZ_{2}$ in this case 
(see table~\eqref{table:G1G2}), and 
since the sequence in \eqref{eq:exact} is exact, 
it follows from Lemma~\ref{lem:Z2} that 
$\tau_{H}$ is conjugate to $\tau_{H}'$ in $H(L)$. 
Let $h \in H(L)$ be such that $h^{-1}\tau_{H}h=\tau_{H}'$. 
Then, $\tau' = \tau_0' \tau_H' = 
\tau_0' (h^{-1}\tau_{H}h) = 
h^{-1} \{(h \tau_{0}' h^{-1}) \tau_{H}\} h$; 
note that $h \tau_{0}' h^{-1} \in G_{0}(L)$. 
Hence, by replacing $\tau'$ with 
$(h \tau_{0}' h^{-1}) \tau_{H}$, 
we may assume from the beginning 
that $\tau_{H}'=\tau_{H}$. 
In particular, we have $\Phi(\tau)=\Phi(\tau') \in \Sym \SC_{Q}$; 
we may assume without loss of generality that 
$\Phi(\tau)=\Phi(\tau')$ fixes $Q_{1} \cong A_{5}$ and $Q_{5} \cong D_{4}$, 
and permutes $Q_{2} \cong Q_{3} \cong Q_{4} \cong A_{5}$ cyclically as: 
$Q_{2} \to Q_{3} \to Q_{4} \to Q_{2}$. 

By part (1), 
$\tau|_{Q_{5}} \in \Aut D_{4}$ and 
$\tau'|_{Q_{5}} \in \Aut D_{4}$ are 
conjugate to $\vp$ in $\Aut D_4$. 
Also, since $\tau_{H}'=\tau_{H}$, 
we deduce from Lemma~\ref{lem:D4}\,(1) that 
both $\tau|_{Q_{5}}$ and $\tau'|_{Q_{5}}$ 
are contained in $\Con{\vp^{c}}{P}$, where $c=1$ or $-1$. 
Hence, by Lemma~\ref{lem:D4}\,(2), 
there exists $y \in W(D_{4}) \cong W(Q_{5})$ such that 
$y^{-1}(\tau|_{Q_{5}})y = \tau'|_{Q_{5}}$. 
Then we see by a direct computation 
that $y^{-1}\tau y = \tau_{0}'' \tau_{H}$ 
for some $\tau_{0}'' \in G_{0}(L)$ such that 
$\tau_{0}''|_{Q_{5}} = \tau_{0}'|_{Q_{5}}$. 
Hence we may assume from the beginning that 
$\tau_{0}|_{Q_{5}} = \tau_{0}'|_{Q_{5}}$. 
Write $\tau_{0} \in G_{0}(L)$ and 
$\tau_{0}' \in G_{0}(L)$ as:
\begin{equation*}
\tau_{0} = (x_{1},\,x_{2},\,x_{3},\,x_{4},\,x_{5}), \qquad
\tau_{0}' = (x_{1}',\,x_{2}',\,x_{3}',\,x_{4}',\,x_{5}).
\end{equation*}
with $x_{m},\,x_{m}' \in W(A_{5})$, $1 \le m \le 4$, and 
$x_{5} \in W(D_{5})$. Set 
\begin{equation*}
w:=(1,\,x_{2}'x_{2}^{-1},\,x_{3}'x_{3}^{-1},\,
    x_{4}'x_{4}^{-1},\,1) \in G_{0}(L).
\end{equation*}
We deduce from Lemma~\ref{lem:conj} that 
$w\tau=w\tau_{0}\tau_{H}$ is conjugate to 
$\tau=\tau_{0}\tau_{H}$. 
Thus, by replacing $\tau$ by $w\tau$, 
we may assume that $x_{m}=x_{m}'$ 
also for all $2 \le m \le 4$. 
Since $\rank L^{\tau}=\rank L^{\tau'}=6$, we have 
$\rank Q_{1}^{\tau}=\rank Q_{1}^{\tau'}=1$ by part (1). 
By Lemma~\ref{lem:fixedr}, 
there exists $z \in W(A_{5}) \cong W(Q_{4})$ 
such that $z^{-1} \bigl(\tau|_{Q_{1}}\bigr) z = 
\bigl(\tau'|_{Q_{1}}\bigr)$. 
Then we see that $z^{-1}\tau z = \tau'$. 
Thus we have proved part (2). 
This completes the proof of Proposition~\ref{prop:autoA5}.
\end{proof}
%
%
\begin{prop} \label{prop:autoA1}
Let $L=\Ni(Q)$ be the Niemeier lattice with $Q=A_1^{24}$, 
$A_3^8$, $A_6^4$, $D_6^4$, $A_2^{12}$, or $E_{6}^{4}$. 
Set $n:=\# \SC_{Q}$. 
\begin{enu}
\item
If $\tau \in \Aut L$ satisfies (\ref{eq:tau}), then 
$\Phi(\tau) \in \Sym \SC_{Q}$ fixes exactly 
$k:=n/4$ elements in $\SC_{Q}=
\bigl\{Q_{m} \mid 1 \le m \le n\bigr\}$; 
we may assume that 
%
%
\begin{equation} \label{eq:Qm}
\tau(Q_{m})=
 \begin{cases}
   Q_{m+k} & \text{\rm if $1 \le m \le 2k$}, \\[1.5mm]
   Q_{m-2k} & \text{\rm if $2k+1\le m \le 3k$}, \\[1.5mm]
   Q_m & \text{\rm if $3k+1 \le m \le n$}.
 \end{cases}
\end{equation}
If the automorphism group of an indecomposable component of $Q$ has 
a fixed-point-free element of order $3$, i.e., 
if $Q=A_{2}^{12}$ or $E_{6}^{4}$ 
(see Lemma~\ref{lem:fixedr}), then either (a) or (b) below holds: 
\begin{enumerate}[\rm (a)]

\item $\tau$ acts on all of $Q_{m}$'s, $3k+1 \le m \le n$, 
trivially, and $\rank L^{\tau} = 12$; 

\item $\tau$ acts on all of $Q_{m}$'s, $3k+1 \le m \le n$, 
fixed-point-freely, and $\rank L^{\tau} = 6$. 

\end{enumerate}
Otherwise, $\tau$ acts on all of $Q_{m}$'s, $3k+1 \le m \le n$, 
trivially, and $\rank L^{\tau} = 12$.

\item
We have $\#\CC_{0}=\#\CC_{18}=0$, and 
\begin{equation*}
\#\CC_{12}=
\begin{cases}
2 & \text{\rm if $Q=A_{6}^{4}$}, \\
1 & \text{\rm otherwise}, 
\end{cases}
\qquad
\#\CC_{6}=
\begin{cases}
1 & \text{\rm if $Q=A_{2}^{12}$ or $E_{6}^{4}$}, \\
0 & \text{\rm otherwise}.
\end{cases}
\end{equation*}
\end{enu}
\end{prop}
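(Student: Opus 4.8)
My plan is to treat the six lattices $A_1^{24}$, $A_3^8$, $A_6^4$, $D_6^4$, $A_2^{12}$, $E_6^4$ uniformly as far as possible, exploiting the fact that in every one of these cases each indecomposable component $Q_m$ is a single copy of one irreducible root lattice and $G_2(L)$ acts on $\SC_Q$ as a specific permutation group. First I would analyze the combinatorial action: since $\tau$ satisfies \eqref{eq:tau} and $Q \ne D_4^6$, Theorem~\ref{thm:mainS3}\,(1) (more precisely the ``Moreover'' clause) guarantees $\tau(Q_m) \ne Q_m$ for some $m$, so $\Phi(\tau) \in \Sym\SC_Q$ is a genuine order-$3$ permutation. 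For each $Q$ I would look up $G_2(L)$ from table~\eqref{table:G1G2} ($M_{24}$, $AGL_3(2)$, $\FA_4$, $\FS_4$, $M_{12}$, $\FS_4$ respectively) and determine the cycle type of its order-$3$ elements acting on $n=\#\SC_Q$ points, showing that any such element is a product of exactly $(n-k)/3$ three-cycles fixing precisely $k=n/4$ components. For $A_6^4$ and $E_6^4$ this is immediate since $n=4$ and an order-$3$ permutation of $4$ points fixes exactly one; for $A_3^8$ ($n=8$, $AGL_3(2)$) and $D_6^4$ ($n=4$) the claim $k=n/4$ likewise follows from the standard description of order-$3$ classes. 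The genuinely nontrivial combinatorial inputs are $A_1^{24}$ ($M_{24}$ on $24$ points) and $A_2^{12}$ ($M_{12}$ on $12$ points): here I would invoke the known cycle structure of order-$3$ elements of the Mathieu groups (each fixing $6$, resp.\ $0$ or $3$, points), pin down that the relevant class fixes exactly $k=6$, resp.\ $k=3$, points, and rename the components so that \eqref{eq:Qm} holds.

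Next I would compute $\rank L^\tau$ via the rank formula. Applying Lemma~\ref{lem:rank}\,(2) component-wise to the $k$ orbits of size $3$ shows each such triple contributes $\rank Q_1$ (the rank of one component) to $\rank Q^\tau$, while the $k$ fixed components $Q_{3k+1},\dots,Q_n$ contribute $\sum \rank Q_m^{\tau|_{Q_m}}$. Combined with Lemma~\ref{lem:rank}\,(1) ($\rank L^\tau=\rank Q^\tau$ since $\rank L=\rank Q=24$), this gives
\begin{equation*}
\rank L^\tau = k\cdot(\rank Q_1) + \sum_{m=3k+1}^{n}\rank Q_m^{\tau}.
\end{equation*}
For each lattice I would plug in the numbers: for $A_1^{24}$, $k=6$ and $\rank A_1=1$ while $\Aut A_1$ has no order-$3$ element (so $\tau|_{Q_m}=\id$ on the six fixed components), forcing $\rank L^\tau=6\cdot1+6\cdot1=12$; for $A_3^8$, $k=2$, $\rank A_3=3$, and Lemma~\ref{lem:fixedr} shows an order-$3$ element of $W(A_3)\cong\FS_4$ fixes a subspace of rank $1$, giving $2\cdot3+2\cdot?$—here I would note the constraint $\rank L^\tau\in 6\BZ$ pins the fixed-component behaviour uniquely; similarly $D_6^4$ gives $k=1$ with $\rank D_6=6$. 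The key dichotomy occurs exactly when a component admits a \emph{fixed-point-free} order-$3$ automorphism, i.e.\ $Q=A_2^{12}$ or $E_6^4$ (cf.\ Lemma~\ref{lem:fixedr}, where $\rank A_2^\ve$ can be $0$ and $\rank E_6^\ve\in\{0,2,4\}$): then on each fixed component $\tau$ acts either as $\id$ (rank contribution $=\rank Q_1$) or fixed-point-freely (contribution $0$), and the divisibility $\rank L^\tau\in 6\BZ$ forces these to be \emph{uniform} across the $k$ fixed components, yielding the two alternatives (a) $\rank L^\tau=12$ and (b) $\rank L^\tau=6$. This establishes part~(1).

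For part~(2), the vanishing $\#\CC_0=\#\CC_{18}=0$ is immediate from part~(1), which shows $\rank L^\tau\in\{6,12\}$ (and $=12$ only, except for $A_2^{12},E_6^4$). The counting of conjugacy classes I would carry out by the same scheme already used in Proposition~\ref{prop:autoA5}: decompose $\tau=\tau_0\tau_H$ with $\tau_0\in G_0(L)$, $\tau_H\in H(L)$, use that $G_2(L)$ has a controlled number of order-$3$ classes (one for $\FS_4$, $M_{12}$, $AGL_3(2)$, $M_{24}$; but \emph{two} for $\FA_4$, which is precisely why $A_6^4$ yields $\#\CC_{12}=2$), lift conjugacy through the exact sequence \eqref{eq:exact} via Lemma~\ref{lem:Z2} when $|G_1(L)|=2$, and finally absorb the Weyl-group part $\tau_0$ using Lemma~\ref{lem:conj} together with Lemma~\ref{lem:fixedr}\,(2) (conjugacy of equal-rank order-$3$ elements within each $W(Q_m)$). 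I expect the main obstacle to be the $A_6^4$ discrepancy: here $G_2(L)\cong\FA_4$ has two distinct classes of order-$3$ elements (the two classes of $3$-cycles in $\FA_4$), so I must verify that these remain distinct after lifting to $\Aut L$ and produce genuinely non-conjugate $\tau$'s, accounting for $\#\CC_{12}=2$ rather than $1$; checking that no outer identification (e.g.\ via $G_1(L)$ or via the glue-code action) collapses them is the delicate point, analogous to the role the diagram automorphism $\rho$ played in distinguishing $\Con{\vp}{P}$ from $\Con{\vp^{-1}}{P}$ in Lemma~\ref{lem:D4}.
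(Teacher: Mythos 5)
Your proposal follows the paper's own proof essentially step for step: part (1) via the cycle types of the order-$3$ elements of $G_2(L)$ acting on $\SC_Q$ (from table~\eqref{table:G1G2} and the ATLAS/MAGMA data) combined with Lemma~\ref{lem:rank} and the divisibility $\rank L^{\tau} \in 6\BZ$, and part (2) via the decomposition $\tau = \tau_0\tau_H$, lifting conjugacy through the exact sequence \eqref{eq:exact} with Lemma~\ref{lem:Z2}, and absorbing the Weyl part with Lemma~\ref{lem:conj} and Lemma~\ref{lem:fixedr}, exactly on the model of Proposition~\ref{prop:autoA5}. Two minor corrections: $M_{24}$ and $M_{12}$ each have \emph{two} classes of order-$3$ elements (the fixed-point-free classes being eliminated precisely by your rank-divisibility argument, so that a unique class fixing $n/4$ points remains, which is all the counting needs), and the $A_6^4$ distinctness you flag as the delicate point is in fact immediate, since conjugacy in $\Aut L$ pushes forward under $\Phi$ to conjugacy in $\Phi(\Aut L) \cong \FA_4$, where the two classes of $3$-cycles are already distinct.
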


\begin{proof}
(1) First, let us check that 
$\Phi(\tau) \in \Sym \SC_{Q}$ fixes exactly 
$n/4$ elements in $\SC_{Q}$. 

If $Q=A_1^{24}$, then $G_2 (L)$ is isomorphic to 
the Mathieu group $M_{24}$ of degree $24$ 
by table~\eqref{table:G1G2}. 
We know from \cite[$M_{24}$]{ATLAS} that 
$M_{24}$ has exactly two conjugacy classes 
$3A$ and $3B$ of order $3$ elements;
when $M_{24}$ acts on a set of $24$ elements 
(such as $\SC_{Q}$) nontrivially, 
an element of $3A$ (resp., $3B$) fixes exactly $6$ elements 
(resp., $0$ element) in the set. 
If $\Phi(\tau)$ is contained in $3B$, then 
it follows from Lemma~\ref{lem:rank} that 
$\rank L^{\tau}= \rank A_{1}^{8}= 8$, 
which contradicts \eqref{eq:tau}. 
Thus we get $\Phi(\tau) \in 3A$, and hence 
$\Phi(\tau)$ fixes exactly $6$ elements in $\SC_{Q}$. 

If $Q=A_{2}^{12}$, then  $G_2 (L)$ is isomorphic to 
the Mathieu group $M_{12}$ of degree $12$ (see table~\eqref{table:G1G2}), 
which has exactly two conjugacy classes $3A$ and $3B$ 
consisting of elements of order $3$ (see \cite[$M_{12}$]{ATLAS});
when $M_{12}$ acts on a set of $12$ elements 
(such as $\SC_{Q}$) nontrivially, 
an element of $3A$ (resp., $3B$) fixes exactly $3$ elements 
(resp., $0$ element) in the set. 
The same argument as above shows that 
$\Phi(\tau) \in 3A$, and hence $\Phi(\tau)$ 
fixes exactly $3$ elements in $\SC_{Q}$. 

If $Q=A_3^8$, then $G_2(L)$ is isomorphic to 
$AGL_{3}(2)$ by table~\eqref{table:G1G2}, 
which is of type $2^{3}.PSL_{2}(7)$ (see \cite[p.\,408]{CS}), 
where $PSL_{2}(7)$ is the projective special linear group of 
degree $2$ over $\BF_{7}$. 
We see, by using the computer program ``MAGMA'' and \cite{ATLAS} for example, 
that this group has a unique conjugacy class 
of order $3$ elements, and that 
when this group acts on a set of $8$ elements 
(such as $\SC_{Q}$) nontrivially, 
an element of the conjugacy class 
fixes exactly $2$ elements in the set. 

If $Q = D_{6}^{4}$ or $E_{6}^{4}$, 
then $G_{2}(L)$ is isomorphic to $\FS_{4}$ 
by table~\eqref{table:G1G2}. 
The group $\FS_{4}$ has a unique conjugacy class 
of order $3$ elements which consists of all $3$-cycles. 
Hence, $\Phi(\tau)$ fixes exactly $1$ element in $\SC_{Q}$. 

If $Q=A_{6}^{4}$, then $G_2 (L)$ is isomorphic to $\FA_4$ 
by table~\eqref{table:G1G2}. The group $\FA_{4}$ has 
exactly two conjugacy classes of order $3$ elements,
which contain the $3$-cycles $(123) \in \FA_{4}$ and $(132) \in \FA_{4}$, 
respectively. Hence, $\Phi(\tau)$ fixes exactly $1$ element in $\SC_{Q}$. 

Next, we see from Lemma~\ref{lem:rank} that
\begin{equation} \label{eq:rank1}
6\BZ \ni 
\rank L^{\tau} 
  = \underbrace{\sum_{m=1}^{k} \rank Q_{m}}_{=6} + 
  \underbrace{\sum_{m=3k+1}^{n} \rank Q_{m}^{\tau}}_{\le 6}, 
\end{equation}
and hence $\rank L^{\tau}=6$ or $12$. 
If $\rank L^{\tau}=6$ (resp., $=12$), then 
$\rank Q_{m}^{\tau} = 0$ (resp., $= \rank Q_{m}$) 
for all $3k+1 \le m \le n$, which implies that 
$\tau$ acts on all of $Q_{m}$'s, $3k+1 \le m \le n$, 
fixed-point-freely (resp., trivially). 
Thus we have proved part (1). 

(2) Part (1) shows that $\#\CC_{0}=\#\CC_{18}=0$ 
in all the cases in this proposition, and also that 
$\#\CC_{6}=0$ if $Q$ is neither $A_{2}^{12}$ nor $E_{6}^{4}$.
Let us show that
\begin{equation} \label{eq:CC}
\begin{cases}
\# \CC_{12} \ge 1 & 
  \text{if $Q = A_1^{24}$, $A_3^8$, $D_6^4$, $A_2^{12}$, or $E_{6}^{4}$}, \\[1.5mm]
\# \CC_{12} \ge 2 & 
  \text{if $Q = A_{6}^{4}$}, \\[1.5mm]
\# \CC_{6} \ge 1 &
  \text{if $Q = A_2^{12}$ or $E_{6}^{4}$}.
\end{cases}
\end{equation}
We see from the proof of part (1) above that 
if $Q = A_1^{24}$, $A_3^8$, $D_6^4$, $A_2^{12}$, or $E_{6}^{4}$ 
(resp., $Q=A_{6}^{4}$), then $G_{2}(L) \cong \Phi(\Aut L) \ (\sbg \Sym \SC_{Q})$
has a unique conjugacy class $\ol{C}$ 
(resp., exactly two conjugacy classes $\ol{C}_{1}$ and $\ol{C}_{2}$) of 
order $3$ elements which fix exactly $k=n/4$ elements in $\SC_{Q}$. 
Here we observe that $G_{1}(L)=1$ or $\BZ_{2}$ 
in all the cases in this proposition (see table~\eqref{table:G1G2}), and 
recall that the sequence in \eqref{eq:exact} is exact. 
Let $C$ (resp., $C_{1}$ and $C_{2}$) be the conjugacy class 
of order $3$ elements in $H(L)=\Aut L \cap (G_{1}(Q):G_2(Q))$
corresponding to $\ol{C}$ (resp., $\ol{C}_{1}$ and $\ol{C}_{2}$) 
under the canonical projection 
$H(L) \twoheadrightarrow H(L)/G_{1}(L) \cong G_{2}(L)$ 
(see Lemma~\ref{lem:Z2}). 
Take an arbitrary element $\sigma \in C \subset H(L)$ 
(resp., $\sigma \in C_{p}$ with $p=1,\,2$). 
It is obvious that $\Phi(\sigma)$ fixes 
exactly $k=n/4$ elements in $\SC_{Q}$, and 
so we may assume that $\Phi(\sigma)$ acts on $\SC_{Q}$ 
as \eqref{eq:Qm}. 
Because the automorphism group of an indecomposable component of $Q$
does not have a Dynkin diagram automorphism of order $3$, we see that 
$\sigma|_{Q_{m}}=\id$ for all $3k+1 \le m \le n$. 
Thus we get $\rank L^{\sigma} = 12 \in 6\BZ$ 
(see \eqref{eq:rank1} above), which implies that 
$\#\CC_{12} \ge 1$ in these cases (resp., 
$\#\CC_{12} \ge 2$ since an element of $C_{1}$ is 
not conjugate to an element of $C_{2}$ also in $\Aut L$). 
In addition, if $Q=A_{2}^{12}$ or $E_{6}^{4}$, 
then for every $3k+1 \le m \le n$, 
there exists an element $w_{m} \in W(Q_{m})$ 
which acts on $Q_{m}$ fixed-point-freely 
(see Lemma~\ref{lem:fixedr}). 
Then, $\sigma':=\bigl(\prod_{m=3k+1}^{n}w_{m}\bigr)\sigma 
\in \Aut L$ satisfies \eqref{eq:tau} with $\rank L^{\sigma'}=6$. 
Thus, $\#\CC_{6} \ge 1$ if $Q=A_{2}^{12}$ or $E_{6}^{4}$. 

Next, we prove that the equalities hold in all of inequalities \eqref{eq:CC}; 
we show the equalities only for $\# \CC_{12}$ since the equality for $\# \CC_{6}$ 
can be shown similarly. 
Assume that $\tau \in \Aut L$ satisfies 
\eqref{eq:tau} with $\rank L^{\tau}=12$; 
it suffices to show that $\tau$ is conjugate to an element in $D \subset H(L)$, 
where we set
\begin{equation*}
D:=\begin{cases}
C 
 & \text{if $Q = A_1^{24}$, $A_3^8$, 
   $D_6^4$, $A_2^{12}$, or $E_{6}^{4}$}, \\[1.5mm]
C_{1} \sqcup C_{2}
 & \text{if $Q=A_{6}^{4}$}. 
\end{cases}
\end{equation*}
Write $\tau$ as: 
$\tau=\tau_{0}\tau_{H}$ with 
$\tau_{0} \in G_{0}(L) = W(Q)$ and 
$\tau_{H} \in H(L)$ (see \eqref{eq:HL}). 
We deduce from part (1) and 
the definitions of $\ol{C}$ and $\ol{C}_{p}$, $p=1,\,2$, 
that $\Phi(\tau_{H}) = \Phi(\tau) \in \ol{D} \ 
(\subset G_{2}(L) \cong \Phi(\Aut L))$, where we set
\begin{equation*}
\ol{D}:=\begin{cases}
\ol{C} 
 & \text{if $Q = A_1^{24}$, $A_3^8$, 
   $D_6^4$, $A_2^{12}$, or $E_{6}^{4}$}, \\[1.5mm]
\ol{C}_{1} \sqcup \ol{C}_{2}
 & \text{if $Q=A_{6}^{4}$}. 
\end{cases}
\end{equation*}
Because $\pi_{2}(\tau_{H})$ corresponds to $\Phi(\tau) = \Phi(\tau_{H})$ 
under the identification $G_{2}(L) \cong \Phi(\Aut L)$, 
it follows from part (1) and the argument above that 
$\pi_{2}(\tau_{H}) \in \ol{D}$, which implies that 
$\tau_{H} \in D$ by the definitions of $C$ and $C_{p}$, $p=1,\,2$. 

Now, let us write 
$\tau_{0} \in G_{0}(L) = W(Q) = \prod_{m=1}^{n} W(Q_{m})$ as:
$\tau_{0} = (x_{1},\,x_{2},\,x_{3},\,x_{4})$,
where $x_{i} \in \prod_{m=(i-1)k+1}^{ik} W(Q_{m})$ for $1 \le i \le 3$, and 
$x_{4} \in \prod_{m=3k+1}^{n} W(Q_{m})$. 
If we set 
$w:=(x_{1}^{-1},\,x_{2}^{-1},\,x_{3}^{-1},\,1)$, then 
we deduce from Lemma~\ref{lem:conj} that 
$w\tau=w\tau_{0}\tau_{H}$ is conjugate to 
$\tau=\tau_{0}\tau_{H}$. 
Thus, by replacing $\tau$ with $w\tau$, 
we may assume that $x_{i}=1$ for $1 \le i \le 3$. 
Now, because $\rank L^{\tau}= 12$, 
we see from the proof of part (1) that 
$\tau(Q_{m})=Q_{m}$ and $\rank Q_{m}^{\tau}=
 \rank Q_{m}$ for all $3k+1 \le m \le n$. 
Hence, $\tau|_{Q_{m}} = \bigl(x_{4}|_{Q_{m}}\bigr)\bigl(\tau_{H}|_{Q_{m}}\bigr) = \id$ 
for all $3k+1 \le m \le n$, which implies that $x_{4}=1$. 
Therefore we get $\tau = \tau_{H} \in D$. 
Thus we have proved part (2). 
This completes the proof of Proposition~\ref{prop:autoA1}.
\end{proof}
%
%
\subsection{Proof of Theorem \ref{thm:mainS3}\,(2): 
Case that $Q = D_{4}^{6}$.}
\label{subsec:main32-D4}

Finally, let us consider 
the case of $L=\Ni(Q)$ with $Q=D_{4}^{6}$; 
throughout this subsection, 
we use the description of the glue code $L/Q$ in \S\ref{subsec:s234}.
We should remark that $G_{1}(L) \cong \BZ_{3}$, 
and that $G_{2}(L) \cong \FS_{6} \cong G_{2}(Q)$ 
and hence $G_{2}(L) = G_{2}(Q)$. 

We divide this case into two propositions: 
in Proposition~\ref{prop:autoD4a} 
(resp., Proposition~\ref{prop:autoD4b}), 
we consider the case that $\tau \in \Aut L$ is 
contained (resp., not contained) in $G_{0}(L):G_{1}(L)$, 
or equivalently, $\Phi(\tau) = 1 \in \Sym \SC_{Q} \cong \FS_{6}$ 
(resp., $\Phi(\tau) \in \Sym \SC_{Q} \cong \FS_{6}$ is of order $3$). 
%
%
\begin{prop}\label{prop:autoD4a} 
Let $L = \Ni(Q)$ be the Niemeier lattice with $Q=D_{4}^{6}$.
We have three automorphisms $\vp^{(6)},\,
\vp^{(3)}\omega^{(3)},\,\omega^{(6)} \in G_{0}(L):G_{1}(L)$ 
which satisfy \eqref{eq:tau}, and act on $Q=D_{4}^{6}$ as: 
\begin{align*}
(\gamma_1,\gamma_2,\gamma_3,\gamma_4,\gamma_5,\gamma_6)
& \stackrel{\vp^{(6)}}{\longmapsto}
(\vp(\gamma_1),\vp(\gamma_2),\vp(\gamma_3),
 \vp(\gamma_4),\vp(\gamma_5),\vp(\gamma_6)), \\
(\gamma_1,\gamma_2,\gamma_3,\gamma_4,\gamma_5,\gamma_6)
& \stackrel{\vp^{(3)}\omega^{(3)}}{\longmapsto}
(\vp(\gamma_1),\vp(\gamma_2),\vp(\gamma_3),
 \omega(\gamma_4),\omega(\gamma_5),\omega(\gamma_6)), \\
(\gamma_1,\gamma_2,\gamma_3,\gamma_4,\gamma_5,\gamma_6)
& \stackrel{\omega^{(6)}}{\longmapsto}
(\omega(\gamma_1),\omega(\gamma_2),\omega(\gamma_3),
 \omega(\gamma_4),\omega(\gamma_5),\omega(\gamma_6)),
\end{align*}
respectively. We have
$\rank L^{\vp^{(6)}}=0$,
$\rank L^{\vp^{(3)}\omega^{(3)}}=6$, 
$\rank L^{\omega^{(6)}}=12$, and also 
$G_{1}(L) = \Gone$.
Moreover, if $\tau \in G_{0}(L):G_{1}(L)$ satisfies \eqref{eq:tau}, 
then $\tau$ is conjugate to exactly one of the automorphisms above. 
\end{prop}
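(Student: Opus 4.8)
The plan is to split the argument into a short verification of the three displayed automorphisms and a longer classification of the remaining ones. For the verification, note first that $\vp^{(6)}$ and $\vp^{(3)}\omega^{(3)}$ are precisely the automorphisms $\sigma_2$ and $\sigma_3$ of \S\ref{subsec:s234}, hence already known to lie in $\Aut L$. Since $\omega$ and $\vp$ induce the same permutation of $D_{4}^{\ast}/D_{4}$ by \eqref{eq:omega-glue} and \eqref{eq:vpi-glue}, the maps $\omega^{(6)}$ and $\vp^{(6)}$ act identically on the glue code $L/Q$; as $\vp^{(6)}\in\Aut L$, this forces $\omega^{(6)}\in\Aut L$ as well. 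All three maps have order $3$ and act on each component through $W(D_4)\omega$, so none lies in $G_{0}(L)=W(Q)$, giving \eqref{eq:tau}; their ranks follow at once from Lemma~\ref{lem:rank}\,(1) together with $\rank D_4^\omega=2$ and $\rank D_4^\vp=0$ in \eqref{eq:D4-fixed}, yielding $0$, $6$, $12$ respectively. Finally $\omega^{(6)}\in G_{1}(Q)$ is a diagonal diagram automorphism lying in $\Aut L$, so $\omega^{(6)}\in G_{1}(L)$; since $|G_{1}(L)|=3$ by \eqref{table:G1G2}, we obtain $G_{1}(L)=\Gone$.

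For the classification, let $\tau\in G_{0}(L):G_{1}(L)$ satisfy \eqref{eq:tau}. Writing $\tau=\tau_{0}\tau_{1}$ with $\tau_{0}\in G_{0}(L)=W(D_4)^{6}$ and $\tau_{1}\in G_{1}(L)=\Gone$, the hypothesis $\tau\notin G_{0}(L)$ forces $\tau_{1}=\omega^{(6)}$ or $(\omega^{(6)})^{-1}$. In either case $\Phi(\tau)=1$, so $\tau$ preserves each component and acts on the $i$-th copy of $D_4$ as an order-$3$ element of $W(D_4)\omega^{\pm1}$. By Lemma~\ref{lem:D4} each such restriction is $W(D_4)$-conjugate to $\omega^{\pm1}$ or to $\vp^{\pm1}$; conjugating $\tau$ by a suitable element of $G_{0}(L)$ (which stays inside $\Aut L$ and acts componentwise) I may assume each restriction equals $\omega^{\pm1}$ or $\vp^{\pm1}$, with a common sign determined by $\tau_{1}$. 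The rank is then $\rank L^{\tau}=2a$, where $a$ is the number of ``$\omega$''-components, so $\rank L^{\tau}\in6\BZ$ forces $a\in\{0,3,6\}$ and $\rank L^{\tau}\in\{0,6,12\}$.

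It remains to show that each value of $a$ gives a single conjugacy class, represented by $\vp^{(6)}$ (for $a=0$), $\vp^{(3)}\omega^{(3)}$ (for $a=3$), and $\omega^{(6)}$ (for $a=6$). For $a=0$ and $a=6$ in the ``$+$'' sign case there is nothing left to arrange: the standardized $\tau$ is already equal to $\vp^{(6)}$ or $\omega^{(6)}$. The two genuinely nontrivial points are (i) for $a=3$, moving the three ``$\omega$''-components into the positions $\{4,5,6\}$ so as to reach $\vp^{(3)}\omega^{(3)}$, and (ii) reconciling the ``$-$'' sign case with the ``$+$'' one, i.e.\ showing each representative is conjugate to its inverse. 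Both are approached by realizing a suitable component permutation by an element of $\Aut L$: since $G_{2}(L)\cong\FS_{6}$ covers the full symmetric group on the six components, every permutation of components lifts to some $g\in\Aut L$, and conjugating the standardized $\tau$ by $g$ permutes the components as wanted. I expect this step to be the main obstacle, because such a lift $g$ is determined only modulo $\Gone$ and in general carries an accompanying diagram-automorphism factor from $G_{1}(Q)=\FS_{3}^{6}$; a factor in $\langle\omega\rangle^{6}$ is harmless, since it preserves each $W(D_4)\omega$-coset and hence the standard form up to a further $G_{0}(L)$-conjugation, whereas a factor involving the order-$2$ diagram automorphism would flip $\omega\leftrightarrow\omega^{-1}$ and $\vp\leftrightarrow\vp^{-1}$. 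The heart of the proof is therefore to use the explicit glue-code description of $\Ni(D_4^6)$ from \S\ref{subsec:s234} to choose the permutations and their lifts so that these corrections are controlled, thereby producing the conjugacy to the stated representative and, in passing, the conjugacy of $\omega^{(6)}$, $\vp^{(3)}\omega^{(3)}$, and $\vp^{(6)}$ to their inverses.

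Finally, the three representatives are pairwise non-conjugate because $\rank L^{\tau}$ is a conjugacy invariant taking the distinct values $0$, $6$, $12$ on them; combined with the previous paragraph this shows every $\tau$ satisfying \eqref{eq:tau} in $G_{0}(L):G_{1}(L)$ is conjugate to exactly one of the three.
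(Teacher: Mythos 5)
Your overall architecture matches the paper's: verify the three representatives (via $\sigma_2$, $\sigma_3$, and the equality of the $\omega$- and $\vp$-actions on the glue code), split $\tau=\tau_0\tau_1$ with $\tau_1=\omega^{(6)}$ or $(\omega^{(6)})^{-1}$, standardize each component restriction via Lemma~\ref{lem:D4}\,(2), count ($\rank L^\tau=2a$ forces $a\in\{0,3,6\}$), and separate the three classes by the rank invariant. But the two points you yourself call ``the heart of the proof'' are precisely where the content lies, and you leave them as intentions rather than arguments: you never prove (i) that the permutation moving the three $\vp$-type components into standard position can be realized by some $g\in\Aut L$ whose conjugation leaves $\tau_1=\omega^{(6)}$ intact, and you never prove (ii) that $(\omega^{(6)})^{-1}$ is conjugate to $\omega^{(6)}$ in $\Aut L$, which is what folding the ``$-$'' sign case into the ``$+$'' case requires. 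Saying the diagram-automorphism corrections ``are controlled'' by the glue code is a plan, not a proof; as written the classification is incomplete.

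The paper closes both gaps by group theory inside $H(L)=\Aut L\cap\bigl(G_{1}(Q):G_{2}(Q)\bigr)$ rather than by explicit glue-code manipulation. Since $G_{1}(L)=\Gone\lhd H(L)$, conjugation yields a homomorphism $H(L)\rightarrow\Aut G_{1}(L)\cong\BZ_{2}$, which factors through $G_{2}(L)\cong\FS_{6}$; every homomorphism $\FS_{6}\rightarrow\BZ_{2}$ kills $\FA_{6}$, so \emph{any} $g\in H(L)$ with $\pi_{2}(g)\in\FA_{6}$ automatically centralizes $\omega^{(6)}$ (Claim~\ref{c:D4-2}). Combined with the high transitivity of $\FA_{6}$ on the six components, this settles (i) with no case analysis at all --- your worry about an accompanying order-$2$ diagram-automorphism factor simply evaporates for lifts of even permutations. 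Point (ii) amounts to showing the homomorphism above is \emph{onto}, i.e.\ that some element of $H(L)$ genuinely inverts $\omega^{(6)}$; the paper gets this from the identification of $L/Q$ with the hexacode, giving $H(L)\cong 3.\FS_{6}$, whose character table (computed by MAGMA) shows the order-$3$ classes have sizes $2$, $120$, $120$, so normality of $G_{1}(L)$ forces $\bigl\{\omega^{(6)},(\omega^{(6)})^{-1}\bigr\}$ to be the class of size $2$ (Claim~\ref{c:D4-1}). Some such input about the extension is unavoidable: if the action were trivial (e.g.\ if $H(L)$ were a direct product $\BZ_{3}\times\FS_{6}$), then $\omega^{(6)}$ and $(\omega^{(6)})^{-1}$ would project to distinct central elements of $\Aut L/G_{0}(L)\cong H(L)$ and hence be non-conjugate in $\Aut L$, and your sign reconciliation would fail. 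Supplying this structural fact about $H(L)$ --- either via the hexacode/$3.\FS_{6}$ identification or an equivalent semilinearity argument over $\BF_{4}$ --- is the missing ingredient your proposal needs.
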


\begin{proof}
By the same argument as in \cite[\S4.2]{SS}, 
we see that $\omega^{(6)}$ preserves the glue code $L/Q$
(note that the actions of $\omega$ and $\vp$ on $D_{4}^{\ast}/D_{4}$ are same). 
Thus, $\omega^{(6)} \in \Aut L$. 
Since $\vp \in W(D_{4}) \omega$, it follows immediately that 
$\vp^{(6)}$ and $\vp^{(3)}\omega^{(3)}$ are contained in 
$G_{0}(L)\omega^{(6)}=W(Q)\omega^{(6)} \subset \Aut L$; 
indeed, $\vp^{(6)}$ and $\vp^{(3)}\omega^{(3)}$ are 
nothing but $\sigma_{2},\,\sigma_{3} \in \Aut L$ 
as in \S\ref{subsec:s234}, respectively. 
The equalities on the ranks of the fixed-point lattices
follow immediately from Lemma~\ref{lem:rank} and \eqref{eq:D4-fixed}. 
Also, $G_{1}(L)=\Gone$ is an immediate 
consequence of the fact that $G_{1}(L)=\BZ_{3}$. 

Now, let us show that 
if $\tau \in G_{0}(L):G_{1}(L)$ satisfies \eqref{eq:tau}, 
then $\tau$ is conjugate to one of $\vp^{(6)}$, 
$\vp^{(3)}\omega^{(3)}$, and $\omega^{(6)}$. 
Write $\tau$ as: $\tau=\tau_{0}\tau_{1}$ 
with $\tau_{0} \in G_{0}(L)$ and $\tau_{1} \in G_{1}(L)$.
Because $G_{1}(L) = \Gone$ as shown above, and $\tau \notin G_{0}(L)$, 
we have $\tau_{1} = \omega^{(6)}$ or $(\omega^{(6)})^{-1}$. 
%
%
\begin{claim} \label{c:D4-1}
$(\omega^{(6)})^{-1}$ is conjugate to $\omega^{(6)}$ 
in $H(L)=\Aut L \cap \bigl(G_{1}(Q):G_{2}(Q)\bigr)$. 
\end{claim}

\noindent
{\it Proof of Claim~\ref{c:D4-1}.}
We know from \cite[p.\,408 and Chapter~18, \S4, IX]{CS} that 
the glue code $L/Q$ forms the $[6,3,4]$ hexacode 
(see \cite[Chapter~3, \S2.5, (2.5.2)]{CS}). Then, 
the group $H(L)$ is isomorphic to 
the (nonsplit) group extension $3.\FS_{6}$ 
of $\FS_{6} \cong G_{2}(L)$ by $\BZ_{3} \cong G_{1}(L)$ 
as mentioned after \cite[Chapter~3, \S2.5, (66)]{CS}. 
We see from the character table of $H(L) \cong 3.\FS_{6}$ 
(which we can obtain by the computer program ``MAGMA'') 
that $H(L)$ has $3$ conjugacy classes of 
order $3$ elements, having $2$, $120$, $120$ elements, respectively. 
%
%
Since $G_{1}(L)=
\bigl\{1,\,\omega^{(6)},\,(\omega^{(6)})^{-1}\bigr\}$ is 
a normal subgroup of $H(L)$, it follows immediately that 
$\bigl\{\omega^{(6)},\,(\omega^{(6)})^{-1}\bigr\}$ is 
one of the three conjugacy classes of order $3$ elements. 
Thus, $(\omega^{(6)})^{-1}$ is conjugate to $\omega^{(6)}$ 
in $H(L)$. \bqed

\vspace{3mm}

Let $h \in H(L)$ be such that $h^{-1}(\omega^{(6)})^{-1}h = 
\omega^{(6)}$. Here we should remark that $g^{-1}\tau g$ is 
contained in $G_{0}(L):G_{1}(L)$ and satisfies \eqref{eq:tau} 
for all $g \in \Aut L$, since $G_{0}(L):G_{1}(L) \lhd \Aut L$ and 
$G_{0}(L) \lhd \Aut L$. Thus, by replacing $\tau$ by 
$h^{-1}\tau h = (h^{-1}\tau_{0}h)(h^{-1}\tau_{1}h)$ 
if necessary, we may assume from the beginning that 
$\tau_{1}=\omega^{(6)}$. 

Now, let $\SC_{Q}=\bigl\{Q_{1},\,\dots,\,Q_{6}\bigr\}$. 
For each $1 \le m \le 6$, we have 
$\tau|_{Q_{m}}=(\tau_{0}|_{Q_{m}})\omega \in W(D_{4})\omega$. 
Hence, by Lemma~\ref{lem:D4}\,(1), 
$\tau|_{Q_{m}}$ is conjugate to either $\omega$ or $\vp$ 
in $P$ for each $1 \le m \le 6$.
Then we see from Lemma~\ref{lem:D4}\,(1) and \eqref{eq:D4-fixed} that 
$\rank Q_{m}^{\tau}=0$ (resp., $=2$) if and only if 
$\tau|_{Q_{m}}$ is conjugate to $\vp$ (resp., $\omega$).
Because $\rank L^{\tau} \in 6\BZ$, it can be easily checked that 
\begin{equation*}
\#\bigl\{1 \le m \le 6 \mid \rank Q_{m}^{\tau}=0\bigr\}=
0, \ 3, \ \text{or} \ 6.
\end{equation*}
If $\#\bigl\{1 \le m \le 6 \mid \rank Q_{m}^{\tau}=0\bigr\}=3$, 
then we may assume that 
\begin{equation} \label{eq:Qmtau}
\rank Q_{m}^{\tau}=
\begin{cases}
0 & \text{for $1 \le m \le 3$}, \\[1.5mm]
2 & \text{for $4 \le m \le 6$}.
\end{cases}
\end{equation}
Indeed, we first claim that

\begin{claim} \label{c:D4-2}
Let $g \in H(L)$ be such that $\pi_{2}(g) \in G_{2}(L) \cong \FS_6$
is contained in $\FA_{6} \lhd \FS_{6}$. 
Then, $g^{-1} \omega^{(6)} g = \omega^{(6)}$. 
\end{claim}

\noindent
{\it Proof of Claim~\ref{c:D4-2}.}
Remark that $H(L)$ acts on $G_{1}(L) \cong \BZ_{3}$ by conjugation 
since $G_{1}(L) \lhd H(L)$. Thus we obtain a group 
homomorphism $H(L) \rightarrow \Aut G_{1}(L) \cong \BZ_2$, 
which induces a group homomorphism 
$G_{2}(L) \cong H(L)/G_{1}(L) \rightarrow 
\Aut G_{1}(L) \cong \BZ_2$. Hence, $\FA_{6} \lhd \FS_{6} 
\cong G_{2}(L)$ is contained in the kernel of this group 
homomorphism. Thus we have proved Claim~\ref{c:D4-2}. \bqed

\vspace{3mm}

Now, let us assume that 
$\bigl\{1 \le m \le 6 \mid \rank Q_{m}^{\tau}=0\bigr\}=
 \bigl\{a,\,b,\,c\bigr\}$. 
Because the action of $\FA_{6}$ on a set of $6$ elements 
(such as $\SC_{Q}$) is $4$-transitive, 
there exists $g_{2} \in \FA_{6} \ (\lhd \FS_{6} \cong G_{2}(L))$ 
such that $g_{2}(1)=a$, $g_{2}(2)=b$, $g_{2}(3)=c$.
Let $g \in H(L)$ be such that $\pi_{2}(g)=g_{2}$; 
by Claim~\ref{c:D4-1}, we have $g^{-1}\omega^{(6)}g=\omega^{(6)}$, 
and hence 
\begin{equation*}
g^{-1} \tau g = (g^{-1}\tau_{0}g)(g^{-1}\tau_{1}g) = 
(g^{-1}\tau_{0}g)(g^{-1}\omega^{(6)}g)=
\underbrace{(g^{-1}\tau_{0}g)}_{\in G_0(L)}\omega^{(6)}.
\end{equation*}
Also, we see that 
$\rank Q_{m}^{g^{-1} \tau g}=0$ for $1 \le m \le 3$, and 
$\rank Q_{m}^{g^{-1} \tau g}=2$ for $4 \le m \le 6$. 
Thus, by replacing $\tau$ by $g^{-1}\tau g$, 
we may assume \eqref{eq:Qmtau}. 

We see from Lemma~\ref{lem:D4}\,(2) that 
for each $1 \le m \le 6$, 
if $\rank Q_{m}^{\tau}=0$ (resp., $=2$), or equivalently, 
if $\tau|_{Q_{m}}$ is conjugate to $\vp$ (resp., $\omega$), 
then there exists $y_{m} \in W(D_{4})$ such that 
$y_{m}^{-1}\bigl(\tau|_{Q_{m}}\bigr)y_{m}=\vp$ 
(resp., $\omega$); set $y := \prod_{m=1}^{6} y_{m} \in G_{1}(L)$. 
Then, for each $1 \le m \le 6$, 
\begin{equation*}
(y^{-1}\tau y)|_{Q_{m}}=
y_{m}^{-1}\bigl(\tau|_{Q_{m}}\bigr)y_{m}=
\begin{cases}
\vp & \text{if $\rank Q_{m}^{\tau}=0$}, \\[1.5mm]
\omega & \text{if $\rank Q_{m}^{\tau}=2$}.
\end{cases}
\end{equation*}
By \eqref{eq:Qmtau}, we see that 
$y^{-1}\tau y$ is equal to 
$\omega^{(6)}$, $\vp^{(3)}\omega^{(3)}$, or 
$\vp^{(6)}$. Thus we have proved the proposition. 
\end{proof}
%
%
\begin{prop}\label{prop:autoD4b} 
Let $L = \Ni(Q)$ be the Niemeier lattice with $Q=D_{4}^{6}$.
We have two automorphisms $\sigma,\,\sigma' \in 
\Aut L \setminus \bigl(G_{0}(L):G_{1}(L)\bigr)$
which satisfy \eqref{eq:tau}, and act on $Q=D_{4}^{6}$ as: 
\begin{align*}
(\gamma_1,\gamma_2,\gamma_3,\gamma_4,\gamma_5,\gamma_6)
& \stackrel{\sigma'}{\longmapsto}
(\psi(\gamma_1),\vp(\gamma_2),\vp^{-1}(\gamma_3),
 \gamma_6,\vp^{-1}(\gamma_4),\vp(\gamma_5)), \\
(\gamma_1,\gamma_2,\gamma_3,\gamma_4,\gamma_5,\gamma_6)
& \stackrel{\sigma}{\longmapsto}
(\gamma_1,\omega(\gamma_2),\omega^{-1}(\gamma_3),
 \gamma_6,\omega^{-1}(\gamma_4),\omega(\gamma_5)), 
\end{align*}
respectively. We have
$\rank L^{\sigma'}=6$ and 
$\rank L^{\sigma}=12$.
Moreover, if $\tau \in \Aut L \setminus \bigl(G_{0}(L):G_{1}(L)\bigr)$ 
satisfies \eqref{eq:tau}, then 
$\tau$ is conjugate to $\sigma$ or $\sigma'$ as above. 
\end{prop}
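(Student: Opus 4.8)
The plan is to establish existence of $\sigma$ and $\sigma'$ with the asserted ranks, and then to show every admissible $\tau$ is conjugate to exactly one of them. For existence, note that $\sigma'$ is precisely the automorphism $\sigma_4 \in \Aut L$ of \S\ref{subsec:s234}, so $\sigma' \in \Aut L$ and $\rank L^{\sigma'}=6$ by \eqref{eq:s234}. For $\sigma$ I would exhibit $g \in G_0(L)=W(D_4)^6$ with $\sigma = g\sigma'$: using $\psi \in W(D_4)$ and $\vp \in W(D_4)\omega$ (so that $\omega\vp^{-1},\,\omega^{-1}\vp \in W(D_4)$, the latter by normality of $W(D_4)$ in $\Aut D_4$), one checks slotwise that $g = (\psi^{-1},\,\omega\vp^{-1},\,\omega^{-1}\vp,\,1,\,\omega^{-1}\vp,\,\omega\vp^{-1})$ works, whence $\sigma \in \Aut L$. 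The ranks then follow from Lemma~\ref{lem:rank} and \eqref{eq:D4-fixed}: the three cyclically permuted $D_4$'s contribute $4$, and the three fixed slots contribute $2+0+0=2$ for $\sigma'$ and $4+2+2=8$ for $\sigma$, giving $\rank L^{\sigma'}=6$ and $\rank L^{\sigma}=12$. Since the two ranks differ, $\sigma$ and $\sigma'$ are not conjugate.

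For the classification, let $\tau \in \Aut L \setminus (G_0(L):G_1(L))$ satisfy \eqref{eq:tau}, so $\Phi(\tau) \in \Sym \SC_Q \cong \FS_6$ has order $3$, i.e.\ it is a $3$-cycle or a product of two disjoint $3$-cycles. In the latter case $\tau$ would freely permute all six $D_4$-components in two orbits of length $3$, so Lemma~\ref{lem:rank} gives $\rank L^\tau = 4+4 = 8 \notin 6\BZ$, contradicting \eqref{eq:tau}; hence $\Phi(\tau)$ is a $3$-cycle. Write $\tau = \tau_0\tau_H$ with $\tau_0 \in G_0(L)$ and $\tau_H \in H(L)$ as in \eqref{eq:HL}; then $\tau_H$ has order $3$ and $\pi_2(\tau_H)=\Phi(\tau)$ is a $3$-cycle. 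The key step is to use the MAGMA computation already invoked in Proposition~\ref{prop:autoD4a} (Claim~\ref{c:D4-1}): $H(L) \cong 3.\FS_6$ has exactly three classes of order-$3$ elements, of sizes $2$, $120$, $120$, the size-$2$ class being $G_1(L)\setminus\{1\}$ (mapping to $1 \in \FS_6$). A fibre count shows the two size-$120$ classes lie over the $3$-cycles and over the double $3$-cycles of $\FS_6$ respectively, so there is a unique class of order-$3$ elements of $H(L)$ mapping to a $3$-cycle. Thus $\tau_H$ is $H(L)$-conjugate to the common $H(L)$-component of $\sigma$ and $\sigma'$; conjugating $\tau$ accordingly (this keeps $\tau_0 \in G_0(L)$, as $G_0(L) \lhd \Aut L$) I may assume $\Phi(\tau)$ fixes $Q_1,Q_2,Q_3$ and cycles $Q_4 \to Q_5 \to Q_6 \to Q_4$, with $\tau|_{Q_1} \in W(D_4)$, $\tau|_{Q_2} \in W(D_4)\omega$, and $\tau|_{Q_3} \in W(D_4)\omega^{-1}$.

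It remains to pin the class down via the three fixed components. By Lemma~\ref{lem:conj} the Weyl part of $\tau$ on the orbit $Q_4\oplus Q_5\oplus Q_6$ may be absorbed, so $\rank L^\tau = \rank Q_1^\tau + \rank Q_2^\tau + \rank Q_3^\tau + 4$. Here $\tau|_{Q_1}\in W(D_4)$ has order dividing $3$, so $\rank Q_1^\tau \in \{2,4\}$ (the identity, or an order-$3$ Weyl element, which all have rank $2$ and are mutually $W(D_4)$-conjugate by the argument used for $D_n$ in the proof of Lemma~\ref{lem:fixedr}), while by Lemma~\ref{lem:D4}\,(1) and \eqref{eq:D4-fixed} each of $\tau|_{Q_2},\tau|_{Q_3}$ is conjugate to $\omega^{\pm1}$ (rank $2$) or to $\vp^{\pm1}$ (rank $0$). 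The only rank patterns with $\rank L^\tau \in 6\BZ$ are $(2,0,0)$, forcing $\rank L^\tau=6$, and $(4,2,2)$, forcing $\rank L^\tau=12$. Finally, applying Lemma~\ref{lem:D4}\,(2) to conjugate $\tau|_{Q_2},\tau|_{Q_3}$ by elements of $W(Q_2),W(Q_3)$ into $\vp,\vp^{-1}$ (resp.\ $\omega,\omega^{-1}$), conjugating $\tau|_{Q_1}$ into $\psi$ (resp.\ leaving it $=\id$), and using Lemma~\ref{lem:conj} once more on the orbit, one conjugates $\tau$ to $\sigma'$ in the rank-$6$ case and to $\sigma$ in the rank-$12$ case.

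I expect the main obstacle to be the middle step: because $G_1(L) \cong \BZ_3$ rather than $\BZ_2$, Lemma~\ref{lem:Z2} is unavailable for lifting conjugacy from $G_2(L)=\FS_6$ to $H(L)=3.\FS_6$, so one genuinely needs the structure of the nonsplit extension $3.\FS_6$ (the class data from Proposition~\ref{prop:autoD4a}) to see that the order-$3$ elements over a $3$-cycle form a single class. Once that is in hand, the rank bookkeeping over the $D_4$-slots is routine.
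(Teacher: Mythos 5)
Your proposal is correct and follows essentially the same route as the paper's proof: existence via $\sigma'=\sigma_{4}$ and $\sigma \in G_{0}(L)\sigma'$, elimination of the double-$3$-cycle case by the rank-$8$ contradiction, reduction of $\tau_{H}$ to $\sigma$ using the conjugacy-class data of $H(L) \cong 3.\FS_{6}$, the rank patterns $(2,0,0)$ and $(4,2,2)$, and the final conjugation via Lemma~\ref{lem:D4}\,(2) and Lemma~\ref{lem:conj}. The only (minor) difference is at the middle step: where the paper reads off from the character table of $3.\FS_{6}$ that the order-$3$ elements lying over $3$-cycles form a single $120$-element class, you derive the same fact from the class sizes $(2,\,120,\,120)$ alone by a fibre count over the two order-$3$ classes of $\FS_{6}$ --- a slightly more self-contained justification that correctly identifies this as the step where Lemma~\ref{lem:Z2} fails for $G_{1}(L) \cong \BZ_{3}$.
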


\begin{proof}
The map $\sigma'$ is nothing but $\sigma_{4} \in \Aut L$ 
(see \S\ref{subsec:s234}). 
Because $G_{0}(L)=W(Q)=\prod_{m=1}^{6} W(Q_{m}) \sbg \Aut L$, and 
because $\vp \in W(D_{4})\omega$ and $\psi \in W(D_{4})$, 
we see that $\sigma \in W(Q) \sigma' \subset \Aut L$. 
Since $\sigma$ is a composition of Dynkin diagram automorphisms 
and a permutation of components, we see that 
$\sigma \in G_{1}(Q):G_{2}(Q)$ with the notation in \S\ref{subsec:Niemeier}. 
Hence, by \eqref{eq:HL}, we have $\sigma \in H(L)$.
The equalities on the ranks of the fixed-point lattices
follow immediately from Lemma~\ref{lem:rank}, along with 
\eqref{eq:D4-fixed}. 

Now, let us show that if 
$\tau \in \Aut L \setminus 
\bigl(G_{0}(L):G_{1}(L)\bigr)$ 
satisfies \eqref{eq:tau}, then 
$\tau$ is conjugate to either $\sigma$ or $\sigma'$. 
Since $G_{2}(L) \cong \FS_{6}$, we see that 
$\Phi(\tau)$ acts on the set $\SC_{Q}$ (of $6$ elements) 
as a $3$-cycle or a product of two mutually commutative $3$-cycles;
in the former case (resp., the later case), $\Phi(\tau)$ fixes 
$3$ elements (resp., $0$ element) in $\SC_{Q}$.
If $\Phi(\tau)$ fixes no element in $\SC_{Q}$, then 
it follows from Lemma~\ref{lem:rank} that 
$\rank L^{\tau}=\rank D_{4}^{2}=8$, 
which contradicts \eqref{eq:tau}. 
Thus we conclude that $\Phi(\tau)$ fixes $3$ elements, 
that is, $\Phi(\tau)$ acts on $\SC_{Q}$ as a $3$-cycle.

Write $\tau$ as: $\tau=\tau_{0}\tau_{H}$ with 
$\tau_{0} \in G_{0}(L)$ and $\tau_{H} \in H(L)$; 
note that $\tau_{H}$ is of order $3$, and 
$\Phi(\tau_{H})$ acts on $\SC_{Q}$ as a $3$-cycle. 
Recall that $H(L)$ has $3$ conjugacy classes of 
order $3$ elements, having $2$, $120$, $120$ elements, 
respectively (see the proof of Claim~\ref{c:D4-1} in 
the proof of Proposition~\ref{prop:autoD4a}). 
Furthermore we see from the character table of 
$H(L) \cong 3.\FS_{6}$ that one of 
these conjugacy classes (having 120 elements) 
consists of all order $3$ elements which 
act on $\SC_{Q}$ as $3$-cycles; 
notice that $\sigma \in H(L)$ is contained in this conjugacy class.
Thus, $\tau_{H}$ is conjugate to the $\sigma$ above in $H(L)$. 
Because $G_{0}(L) \lhd \Aut L$, 
we may assume from the beginning that $\tau_{H}=\sigma$. 

Now, let $\SC_{Q}:=\bigl\{Q_{1},\,\dots,\,Q_{6}\bigr\}$. 
Because $\tau=\tau_{0}\tau_{H}=\tau_{0}\sigma$, 
it follows from Lemma~\ref{lem:rank}, 
along with Lemma~\ref{lem:D4}\,(1) and \eqref{eq:D4-fixed}, that
\begin{equation*}
6\BZ \ni 
\rank L^{\tau} = \rank Q^{\tau} = 
\underbrace{\rank Q_{1}^{\tau}}_{=2\,\text{or}\,4} + 
\underbrace{\rank Q_{2}^{\tau}}_{=0\,\text{or}\,2} + 
\underbrace{\rank Q_{3}^{\tau}}_{=0\,\text{or}\,2} + 
\underbrace{\rank Q_{4}}_{=4}
\end{equation*}
Therefore, 
$\bigl(
 \rank Q_{1}^{\tau},\,
 \rank Q_{2}^{\tau},\,
 \rank Q_{3}^{\tau}
\bigr)
=(2,0,0)$ or $(4,2,2)$. 
Let us verify that $\tau$ is conjugate to $\sigma'$ 
in the former case; it can be shown similarly that 
$\tau$ is conjugate to $\sigma$ 
in the latter case. 
Observe that $\tau|_{Q_{1}} \in P$ (resp., 
$\tau|_{Q_{2}} \in P$, $\tau|_{Q_{3}} \in P$) 
is conjugate to $\psi$ (resp., $\vp$, $\vp^{-1}$) in $P$.
By Lemma~\ref{lem:D4}\,(2), there exists 
$y_{1} \in W(D_{4}) \cong G_{0}(Q_{1})$ 
(resp., $y_{2} \in W(D_{4}) \cong G_{0}(Q_{2})$, 
$y_{3} \in W(D_{4}) \cong G_{0}(Q_{3})$) such that 
$y_{1}^{-1}\bigl(\tau|_{Q_{1}}\bigr)y_{1}=\psi$ 
(resp., $y_{2}^{-1}\bigl(\tau|_{Q_{2}}\bigr)y_{2}=\vp^{-1}$, 
$y_{3}^{-1}\bigl(\tau|_{Q_{3}}\bigr)y_{3}=\vp$).
Set $y:=\prod_{m=1}^{3} y_{m} \in G_{0}(L)$. 
Then we see that $(y^{-1}\tau y)|_{Q_{m}}=\sigma'|_{Q_{m}}$
for $1 \le m \le 3$. Furthermore, we deduce by Lemma~\ref{lem:conj} 
(see also the argument at the end of 
the proof of Proposition~\ref{prop:autoA5}) that
this $y^{-1}\tau y$ is conjugate to $\sigma'$. 
Thus we have proved Proposition~\ref{prop:autoD4b}. 
\end{proof}

Combining Propositions~\ref{prop:autoD4a} and 
\ref{prop:autoD4b}, we see that $\#\CC_{0}=1+0=1$, 
$\#\CC_{6}=1+1=2$, $\#\CC_{12}=1+1=2$, 
$\#\CC_{18}=0+0=0$ in the case of $Q=D_{4}^{6}$. 
This completes the proof of 
Theorem \ref{thm:mainS3}\,(2).
%
%
\section{Review on Miyamoto's $\BZ_3$-orbifold construction.}
\label{sec:review}

In this section, we review lattice VOAs, 
twisted modules over lattice VOAs, 
and Miyamoto's $\BZ_3$-orbifold construction; 
for details, see \cite[\S6.4 and \S6.5]{LL} (and also 
\cite[\S2.1]{SS}), \cite{L, DL} (and also \cite[\S2.2]{SS}), 
and \cite{M} (and also \cite[\S2.3]{SS}), respectively.
Here we use the notation in \cite[\S2]{SS}. 
%
%
\subsection{Lattice VOAs.}
\label{subsec:lattice}

Let $L$ be a positive-definite, even lattice 
with $\BZ$-bilinear form $\pair{\cdot\,}{\cdot}$, 
and let $V_L:= M(1) \otimes _{\BC} \BC \{ L \}$ be 
the lattice VOA associated with $L$, with the vertex operator
\begin{equation*} 
Y(\cdot\,,\,z) : 
V_L \rightarrow (\End_{\BC} V_L)[\hspace{-1.5pt}[ z,z^{-1} ]\hspace{-1.5pt}],\ 
a \mapsto Y(a,z) = \sum _{n \in \BZ} a_n z^{-n-1}, 
\end{equation*}
where $M(1)$ is the free boson VOA associated to 
$\Fh := L \otimes _{\BZ} \BC$ (regarded as 
an abelian Lie algebra), 
and $\BC \{ L \}$ is the twisted group ring of $L$ 
(for details, see \cite[\S2.1]{SS}).
Recall that 
$V_L$ is spanned by the elements of the form:
$h_{k}(-n_{k}) \cdots h_{1}(-n_{1}) 1 \otimes e^{\alpha}$ 
with $h_{1},\,\dots,\,h_{k} \in \Fh$, 
$n_{1},\,\dots,\,n_{k} \in \BZ_{ > 0}$, and $\alpha \in L$; 
the weight of this element is equal to 
\begin{equation*}
n_{k}+\cdots+n_{1}+\frac{\pair{\alpha}{\alpha}}{2} \in \BZ_{\ge 0}.
\end{equation*}
In particular, the weight one subspace $(V_L)_1$ of $V_{L}$ is 
spanned by $\bigl\{h(-1) 1 \otimes e^0 \mid h \in \Fh \bigr\} \cup 
\bigl\{1 \otimes e^\alpha \mid \alpha \in \Delta \bigr\}$, 
where $\Delta := 
\bigl\{\alpha\in L \mid \pair{\alpha}{\alpha} =2 \bigr\}$. 
%
%
\subsection{Twisted modules over lattice VOAs.}

Let $L$ be a positive-definite, even lattice
with $\BZ$-bilinear form $\pair{\cdot\,}{\cdot}$. 
If $\tau \in \Aut L$ is of odd order, then 
there exists a $\tau$-invariant $2$-cocycle 
$\ve_{0}:L \times L \rightarrow \BZ/s\BZ$, where $s=2|\tau|$ 
(see, for example, the argument at the beginning of \cite[\S2.2]{SS}).
Hence each $\tau \in \Aut L$ of odd order 
induces a VOA automorphism of $V_L$, 
denoted also by $\tau$, of the same order such that 
%
%
\begin{equation} \label{eq:tau-VOA}
\tau \bigl( h_k (-n_k) \cdots h_1 (-n_1)1 \otimes e^{\alpha} \bigr) 
 := (\tau h_k)(-n_k) \cdots (\tau h_1)(-n_1)1 \otimes e^{\tau \alpha}
\end{equation}
for $h_{1},\,\dots,\,h_{k} \in \Fh$, 
$n_{1},\,\dots,\,n_{k} \in \BZ_{ > 0}$, and $\alpha \in L$. 

Assume that $L$ is a Niemeier lattice, and 
$\tau \in \Aut L$ is of order $3$. 
Since $V_{L}$ is holomorphic and $C_{2}$-cofinite, 
we see from \cite[Theorem~10.3]{DLM00} that 
there exists a unique irreducible $\tau$-twisted 
$V_{L}$-module, which we denote by $V_{L}(\tau)$.
We know from \cite{L, DL} (see also \cite[\S2.2]{SS}) 
the following realization of $V_{L}(\tau)$. 
Let $\zeta$ be a primitive third root of unity, and 
set $\Fh_{(m)}=\bigl\{ h \in \Fh \mid \tau (h) = \zeta^m h \bigr\}$ 
for $m \in \BZ$; note that $\Fh_{(m)}=\Fh_{(m+3)}$ 
for every $m \in \BZ$.
Define the $\tau$-twisted affinization 
$\ha{\Fh}[\tau]$ of $\Fh$ and 
its Lie subalgebra $\ha{\Fh}[\tau]_{\ge 0}$ by 
\begin{align*}
& \ha{\Fh}[\tau ]:= 
 \bigoplus _{n \in (1/3)\BZ} 
 \bigl(\Fh _{(3n)} \otimes _{\BC} \BC t^n\bigr) \oplus \BC \bk
 \quad \text{(with $\bk$ a central element)}, \\
& \ha{\Fh}[\tau] _{\ge 0} := 
 \bigoplus _{n \in (1/3)\BZ_{\ge 0}} 
 \bigl(\Fh _{(3n)} \otimes _{\BC} \BC t^n\bigr) \oplus \BC \bk,
\end{align*}
respectively, and then define 
the ``$\tau$-twisted'' free bosonic space $M(1)[\tau] := 
\Ind_{\ha{\Fh}[\tau]_{\ge 0}}^{\ha{\Fh}[\tau]}\BC$. 
Furthermore, following \cite{L, DL} (see also \cite[\S2.2]{SS}), 
we define a certain central extension $\ha{L}_{\tau}$ of $L$ 
by the cyclic group $\kc$ of order $2|\tau|=6$. 
Let $N := \bigl\{ \alpha \in L \mid 
\pair{\alpha}{\Fh_{(0)}} = \{ 0 \} \bigr\}$, and 
$\ha{N}_{\tau} \sbg \ha{L}_{\tau}$ the inverse image 
of $N \subset L$ under the canonical projection 
$\ha{L}_{\tau} \twoheadrightarrow L$.
By \cite{L, DL}, 
there exists a unique finite-dimensional, 
irreducible $\ha{N}_{\tau}$-module $T(\tau)$ such that 
$M(1)[\tau] \otimes_{\BC} U(\tau)$, 
with $U(\tau):=\Ind_{\ha{N}_{\tau}}^{\ha{L}_{\tau}} T(\tau)$, 
can be endowed with an irreducible $\tau$-twisted $V_{L}$-module 
structure; we have $V_{L}(\tau) \cong M(1)[\tau] \otimes_{\BC} U(\tau)$ 
by the uniqueness and irreducibility of $\tau$-twisted modules.

The $\tau$-twisted vertex operator for 
$V_L (\tau)$ is denoted by  
\begin{equation*}
Y_{\tau} (\cdot\,,\,z) : 
V_{L} \rightarrow 
(\End _{\BC} V_{L}(\tau))[\hspace{-1.5pt}[ z^{1/3},\,z^{-1/3} ]\hspace{-1.5pt}], \quad
a \mapsto Y_{\tau}(a,\,z) = 
\sum _{n \in (1/3)\BZ} a_n z^{-n-1}.
\end{equation*}
Notice that $V_{L}(\tau)$ is spanned by the elements of the form:
$h_k (-n_k) \cdots h_1 (-n_1)1 \otimes (g \cdot t)$ with 
$n_1,\,\ldots,\,n_k \in (1/3)\BZ_{>0}$, 
$h_1 \in \Fh_{(-3n_1)},\,\ldots,\,h_k \in \Fh_{(-3n_k)}$, 
$g \in \ha{L}_{\tau}$, and $t \in T(\tau)$; 
the weight of this element is equal to 
%
%
\begin{equation} \label{eq:wt-tw}
n_k + \cdots + n_1 + 
 \frac{\pair{\ol{g}_{(0)}}{\ol{g}_{(0)}}}{2} + \rho, 
\end{equation} 
where
\begin{align} \label{eq:topwt}
\rho := 
 \frac{1}{18}(\dim \Fh_{(1)} + \dim \Fh_{(2)}) = 
 \frac{1}{18}(\rank L - \rank L^{\tau}),
\end{align}
the map $\ol{\,\cdot\,}:\ha{L}_{\tau} \twoheadrightarrow L$ is 
the canonical projection from $\ha{L}_{\tau}$ onto $L$, and 
for $h \in \Fh$ and $m \in \BZ$, 
$h_{(m)} \in \Fh_{(m)}$ denotes the image of $h$ 
under the orthogonal projection from $\Fh$ onto $\Fh_{(m)}$.
Remark that $\rho$ is the top weight of $V_{L}(\tau)$, that is, 
$V_L(\tau) = \bigoplus_{n \in (1/3)\BZ_{\ge 0}} 
V_L(\tau)_{n+\rho}$. 

%
\subsection{Miyamoto's $\BZ_{3}$-orbifold construction.}
\label{subsec:M-Z3}

Let $L$ be a Niemeier lattice, and let $\tau \in \Aut L$ 
be such that $|\tau|=3$ and $\rank L^{\tau} \in 6\BZ$; 
by \eqref{eq:topwt}, for each $r=1,\,2$, 
the top weight $\rho$ of the irreducible 
$\tau^{r}$-twisted $V_{L}$-module $V_{L}(\tau^{r})$ 
is equal to $1/3$ (resp., $2/3$, $1$, $4/3$) 
if $\rank L^{\tau}=18$ (resp., $12$, $6$, $0$). 
Set $V_{L}(\tau^{r})_{\BZ} := 
\bigoplus _{n \in \BZ} V_L(\tau^{r})_n$ for $r=1,\,2$, 
and then define 
%
%
\begin{equation} \label{eq:M}
\ti{V}_{L}^{\tau}:=
 V_L^\tau\oplus V_L(\tau)_\BZ\oplus V_L(\tau^2)_\BZ,
\end{equation}
where $V_{L}^{\tau}$ is the fixed-point subVOA of $V_{L}$ 
under $\tau \in \Aut V_{L}$. 
We know the following theorem from \cite[\S3]{M}. 
%

\begin{thm} \label{thm:M}
Keep the notation and setting above. 
We can give $\ti{V}_{L}^{\tau}$ a VOA structure of 
central charge $24=\rank L$. Furthermore, 
$\ti{V}_{L}^{\tau}$ is holomorphic and $C_{2}$-cofinite. 
\end{thm}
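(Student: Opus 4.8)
The plan is to exhibit $\ti{V}_{L}^{\tau}$ of \eqref{eq:M} as a $\BZ_{3}$-graded simple current extension of the fixed-point subVOA $V_{L}^{\tau}$, and then to deduce all three assertions from the general theory of such extensions. The first step is to record the foundational properties of the base VOA $W_{0}:=V_{L}^{\tau}$. Since $L$ is a positive-definite even unimodular lattice of rank $24$, the lattice VOA $V_{L}$ is holomorphic, rational, $C_{2}$-cofinite and of CFT type, with central charge $24=\rank L$; because $\tau\in\Aut V_{L}$ has finite order $3$, the general orbifold theory for rational $C_{2}$-cofinite VOAs guarantees that $V_{L}^{\tau}$ is again rational, $C_{2}$-cofinite and of CFT type, with the same central charge $24$.

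Second, I would classify the irreducible $V_{L}^{\tau}$-modules. The automorphism $\tau$ extends to the (unique, by \cite[Theorem~10.3]{DLM00}) irreducible twisted modules $V_{L}(\tau)$ and $V_{L}(\tau^{2})$; decomposing each of $V_{L}$, $V_{L}(\tau)$, $V_{L}(\tau^{2})$ into the $\zeta^{j}$-eigenspaces of the induced $\tau$-action ($j\in\BZ_{3}$) produces exactly nine irreducible $V_{L}^{\tau}$-modules $M_{(a,j)}$, indexed by $(a,j)\in\BZ_{3}\times\BZ_{3}$, with $a$ recording the twist sector. These form a pointed braided fusion category whose fusion is the group law of $\BZ_{3}\times\BZ_{3}$, and whose conformal weights taken modulo $\BZ$ define the associated discriminant quadratic form.

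Third comes the key numerology. By \eqref{eq:topwt} the top weight of $V_{L}(\tau^{r})$ is $\rho=(\rank L-\rank L^{\tau})/18$, and since $\rank L=24$ the hypothesis $\rank L^{\tau}\in 6\BZ$ is precisely equivalent to $\rho\in(1/3)\BZ$. In each $\zeta^{j}$-eigenspace the weights lie in a single coset of $\BZ$ in $(1/3)\BZ$, so $\rho\in(1/3)\BZ$ is exactly the condition that one eigenspace of $V_{L}(\tau)$ (and one of $V_{L}(\tau^{2})$) has integral weights; these are the nonzero integer-weight submodules $V_{L}(\tau)_{\BZ}$ and $V_{L}(\tau^{2})_{\BZ}$. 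Writing $V_{L}(\tau)_{\BZ}=M_{(1,j_{1})}$ and $V_{L}(\tau^{2})_{\BZ}=M_{(2,j_{2})}$, the three summands of $\ti{V}_{L}^{\tau}$ are indexed by the order-three cyclic subgroup $\langle(1,j_{1})\rangle=\{(0,0),(1,j_{1}),(2,j_{2})\}$ of $\BZ_{3}\times\BZ_{3}$, on which the discriminant form vanishes (each generator has integral conformal weight); this is a maximal isotropic, i.e.\ Lagrangian, subgroup.

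Finally I would invoke the simple current extension theory for rational $C_{2}$-cofinite VOAs (abelian intertwining algebras in the sense of \cite{L,DL}, cf.\ \cite{LL}): because the nontrivial summands are pairwise local simple currents of integral conformal weight, the intertwining operators among $W_{0}$, $V_{L}(\tau)_{\BZ}$, $V_{L}(\tau^{2})_{\BZ}$ assemble into a single $\BZ_{\ge 0}$-graded vertex operator, endowing $\ti{V}_{L}^{\tau}$ with a VOA structure of central charge $24$. Holomorphicity follows because the extension is by a Lagrangian subgroup of the discriminant form: the resulting category of $\ti{V}_{L}^{\tau}$-modules is then trivial, so $\ti{V}_{L}^{\tau}$ has itself as its unique irreducible module; and $C_{2}$-cofiniteness is inherited, being a finite extension of the $C_{2}$-cofinite $V_{L}^{\tau}$. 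The main obstacle is the construction of the extended vertex operator itself, namely verifying that the associativity (locality) cocycle of the abelian intertwining algebra on $\langle(1,j_{1})\rangle$ is trivializable, so that one obtains an honest VOA rather than merely an abelian intertwining algebra; this vanishing is controlled precisely by the integrality of the conformal weights forced by $\rank L^{\tau}\in 6\BZ$, and is the technical heart of Miyamoto's argument in \cite[\S3]{M}.
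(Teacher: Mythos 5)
You should first note what the paper's ``own proof'' of Theorem~\ref{thm:M} is: there is none. The theorem is quoted from Miyamoto, with the construction and proof residing in \cite[\S2--\S3]{M}, so the real comparison is with Miyamoto's argument, and your route is genuinely different. What you sketch is the later discriminant-form formalism (in the style of van Ekeren--M\"oller--Scheithauer): classify the irreducible $V_L^\tau$-modules as nine simple currents forming $\BZ_3\times\BZ_3$ with the quadratic form given by conformal weights modulo $\BZ$, and extend along a Lagrangian subgroup to get holomorphicity for free. Miyamoto has none of this machinery available and does not use it: in \cite{M} the twisted sectors $V_L(\tau^r)$ are realized concretely via the twisted vertex operators of \cite{L,DL}, the integral-weight pieces $V_L(\tau^r)_{\BZ}$ are shown directly to be simple current $V_L^\tau$-modules, the VOA structure on the $\BZ_3$-graded sum comes from simple current extension theory (exactly the framework this paper points to in Remark~\ref{rem:SCE}\,(1) via \cite{LY}), and holomorphicity is proved directly rather than read off from maximality of an isotropic subgroup. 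Your numerology agrees with the paper's: by \eqref{eq:topwt}, $\rho=(24-\rank L^\tau)/18\in(1/3)\BZ$ precisely when $\rank L^\tau\in 6\BZ$, which is what makes $V_L(\tau^r)_{\BZ}$ a single eigenspace and the extension $\BZ$-graded. Your approach buys generality and a conceptual holomorphicity criterion; Miyamoto's buys self-containedness for lattice VOAs without assuming any regularity of $V_L^\tau$.

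There is, however, one concrete gap when your proposal is read against the literature this paper can cite. Your opening step --- that ``general orbifold theory for rational $C_2$-cofinite VOAs guarantees that $V_L^\tau$ is again rational, $C_2$-cofinite and of CFT type'' --- invokes a theorem that did not exist at the time: the $C_2$-cofiniteness of $V_L^\tau$ for an order-$3$ automorphism is itself one of the main results proved in \cite{M}, and rationality of such fixed-point subVOAs was established only later (Carnahan--Miyamoto), yet your second and fourth steps (the nine-module classification, the pointed braided fusion category, the Verlinde-type fusion rules) depend on rationality in an essential way. Relatedly, your identification of the ``technical heart'' as the trivialization of the associativity cocycle is misplaced: for a cyclic extension of odd order with integral conformal weights that step is comparatively soft (cf.\ \cite{DM04b,LY}); the hard input in \cite{M} is precisely the $C_2$-cofiniteness of the fixed-point subalgebra, which your argument assumes at the outset. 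So your proof is salvageable as a modern argument with the post-2015 references supplied, but as written it presupposes the deepest part of what Theorem~\ref{thm:M} rests on.
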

%
%
\begin{rem} \label{rem:SCE}
\mbox{}
\begin{enu}

\item The holomorphic VOA $\ti{V}_{L}^{\tau}=
V_L^\tau \oplus V_L(\tau)_\BZ \oplus V_L(\tau^2)_\BZ$ is 
a $\BZ_{3}$-graded, simple current extension of 
the $\tau$-fixed subVOA $V_{L}^{\tau}$ of $V_{L}$; 
for the definition and properties of simple current extensions, 
see \cite[\S2]{LY} for example.
Thus the linear automorphism $\phi$ of 
$\ti{V}_{L}^{\tau}$ defined by: $\phi|_{V_L^\tau}=1$, 
$\phi|_{V_L(\tau)_\BZ}=\zeta$, and 
$\phi|_{V_L(\tau^{2})_\BZ}=\zeta^{2}$ is 
a VOA automorphism of $\ti{V}_{L}^{\tau}$. 

\item 
Let $\sigma,\,\tau \in \Aut L$ be of order $3$. 
If $\sigma$ and $\tau$ are conjugate to 
each other in $\Aut L$, then 
$\sigma,\,\tau \in \Aut V_{L}$ are also conjugate 
to each other in $\Aut V_{L}$. Indeed, 
we see from \cite[Theorem~2.1]{DN} that
$\sigma$, $\tau$ are contained in $O(\ha{L}) = 
\Hom(L,\,\BZ_2).\Aut L \sbg \Aut V_{L}$. 
So it suffices to show that every element in 
$\Hom(L,\,\BZ_2) \sigma$ of order $3$ is 
conjugate to each other. Let $x\sigma \in 
\Hom(L,\,\BZ_2) \sigma$ be of order $3$, 
with $x \in \Hom(L,\,\BZ_2)$, $x \ne 1$; 
we show that $x\sigma$ is conjugate to $\sigma$ in $\Aut V_{L}$. 
Indeed, we see that $\sigma$ is of order $3$, and 
$(x\sigma^{-1})^{3}=1$. Hence, 
$(\sigma x \sigma^{-1})^{-1} (x\sigma) (\sigma x \sigma^{-1})
 =\sigma(x\sigma^{-1})^{3} = \sigma$. Thus we have shown 
that $x\sigma$ is conjugate to $\sigma$ in $\Aut V_{L}$, as desired.
\end{enu}
\end{rem}

We denote by
\begin{equation*}
\ti{Y}(\cdot\,,\,z) : 
\ti{V}_{L}^{\tau} \rightarrow 
(\End _{\BC} \ti{V}_{L}^{\tau}) [\hspace{-1.5pt}[z,\,z^{-1}]\hspace{-1.5pt}], \quad
a \mapsto \ti{Y}(a,\,z) = \sum _{n \in \BZ} a_n z^{-n-1}
\end{equation*}
the vertex operator for the VOA $\ti{V}_{L}^{\tau}$;
remark that for $a \in V_{L}^{\tau}$, 
\begin{equation*}
\ti{Y}(a,\,z) =
\begin{cases}
Y(a,\,z) & \text{on $V_L ^{\tau}$}, \\
Y_{\tau}(a,\,z) & \text{on $V_L(\tau)_{\BZ}$}, \\
Y_{\tau^2}(a,\,z) & \text{on $V_L(\tau ^2)_{\BZ}$}.
\end{cases}
\end{equation*}
%
%
\begin{lem}\label{lem:Ysh}
Keep the notation and setting above. 
We set
\begin{align*}
 & \FH_{0}:=\bigl\{ h(-1)1 \otimes e^{0} \mid 
   h \in \Fh_{(0)} \bigr\} \subset (V_{L}^{\tau})_{1}; \\
 & \FH_{1}:=\bigl\{ h(-1/3)1 \otimes t \mid 
   h \in \Fh_{(-1)},\, t \in T(\tau) \bigr\}
   \subset (V_{L}(\tau))_{\rho+1/3}; \\
 & \FH_{2}:=\bigl\{ h(-1/3)1 \otimes t \mid 
   h \in \Fh_{(-2)},\, t \in T(\tau^{2}) \bigr\}
   \subset (V_{L}(\tau^{2}))_{\rho+1/3}.
\end{align*}
Let $a \in \FH_{0}$. Then the $0$-th operator 
$a_{0} \in \End_{\BC} V_{L}$ acts on $\FH_{0}$ trivially. 
Also, for each $r=1,\,2$, the $0$-th operator 
$a_{0} \in \End_{\BC} V_{L}(\tau^{r})$ acts on both of 
$\FH_{r}$ and the top weight subspace 
$V_{L}(\tau^{r})_{\rho}$ trivially. 
\end{lem}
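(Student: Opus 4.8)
The plan is to compute $a_0$ explicitly and to observe that, in each of the three modules $V_{L}$, $V_{L}(\tau)$, $V_{L}(\tau^{2})$, it coincides with the Heisenberg zero-mode $h(0)$, whose action is governed entirely by the $\Fh_{(0)}$-momentum. Write $a=h(-1)1\otimes e^{0}$ with $h\in\Fh_{(0)}$. Since $a$ is a weight-one Heisenberg vector, its untwisted vertex operator is $Y(a,z)=\sum_{n\in\BZ}h(n)z^{-n-1}$ (see \cite[\S6.4 and \S6.5]{LL}), so $a_{0}=h(0)$ on $V_{L}$. For the twisted modules I would use the explicit twisted vertex operator of a Heisenberg vector from \cite{L,DL} (see also \cite[\S2.2]{SS}): because $h\in\Fh_{(0)}$, only the component $h_{(0)}=h$ contributes and only integral modes occur, so $Y_{\tau^{r}}(a,z)=\sum_{n\in\BZ}h(n)z^{-n-1}$, and again $a_{0}=h(0)$ as an operator on $V_{L}(\tau^{r})$, now $h(0)$ denoting the twisted zero-mode.

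Next I would record the two properties of $h(0)$ that drive everything. First, from the (twisted) Heisenberg commutation relation $[h(m),h'(n)]=m\pair{h}{h'}\delta_{m+n,0}\bk$, specialized to $m=0$, we get $[h(0),h'(n)]=0$ for every Heisenberg mode $h'(n)$, in both the untwisted and the twisted algebras. Second, $h(0)$ is the $\Fh_{(0)}$-momentum operator: it acts on $1\otimes e^{\alpha}\in V_{L}$ by the scalar $\pair{h}{\alpha}$, and on an element with underlying group element $g\in\ha{L}_{\tau}$ by the scalar $\Bpair{h}{\ol{g}_{(0)}}$; here I use that $h\in\Fh_{(0)}$ together with $\pair{\Fh_{(i)}}{\Fh_{(j)}}=0$ unless $i+j\equiv0$, so that $\pair{h}{\beta}=\Bpair{h}{\beta_{(0)}}$ for every $\beta\in L$. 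In particular every $1\otimes t$ with $t\in T(\tau^{r})$ has vanishing $\Fh_{(0)}$-momentum, since its underlying group element projects into $N$ and $N\perp\Fh_{(0)}$; hence $h(0)$ annihilates $1\otimes t$.

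With these facts the three assertions follow by direct manipulation. For $\FH_{0}$: $a_{0}(h'(-1)1\otimes e^{0})=h'(-1)\,h(0)(1\otimes e^{0})=\pair{h}{0}\,(h'(-1)1\otimes e^{0})=0$. For the top-weight space $V_{L}(\tau^{r})_{\rho}$: by the weight formula \eqref{eq:wt-tw}, an element of weight $\rho$ carries no Heisenberg excitation and satisfies $\pair{\ol{g}_{(0)}}{\ol{g}_{(0)}}=0$, whence $\ol{g}_{(0)}=0$ (the form is positive-definite on the real space $L^{\tau}\otimes\BR$, into which $\ol{g}_{(0)}$ falls), so $h(0)$ acts by $\Bpair{h}{\ol{g}_{(0)}}=0$. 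For $\FH_{r}$: $a_{0}(h'(-1/3)1\otimes t)=h'(-1/3)\,h(0)(1\otimes t)=0$, using that $h(0)$ commutes with $h'(-1/3)$ and annihilates $1\otimes t$ for $t\in T(\tau^{r})$.

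The only genuine subtlety, and where I would concentrate the verification, lies in the twisted setting: confirming that $a_{0}$ is precisely the twisted zero-mode $h(0)$ and that this zero-mode is exactly the $\Fh_{(0)}$-momentum, with no hidden contribution on the factor $T(\tau^{r})$. Both are standard structural features of twisted lattice modules \cite{L,DL}, and once invoked the remaining computation is the same formal calculation as in the untwisted case. The positivity argument giving $\ol{g}_{(0)}=0$ from a vanishing norm, and the $\Fh_{(i)}$--$\Fh_{(j)}$ orthogonality used to replace $\pair{h}{\beta}$ by $\Bpair{h}{\beta_{(0)}}$, are the two small points to state carefully.
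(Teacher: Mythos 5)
Your proposal is correct and follows essentially the same route as the paper: the paper's proof simply cites \cite[Lemma~2.2.2\,(1)]{SS} for the action on the top weight subspace and \cite[(2.2.8) and (2.2.9)]{SS} for the action on $\FH_{r}$, and the content of those citations is exactly what you establish directly, namely that $a_{0}$ is the zero-mode $h(0)$, that it commutes with all (twisted) Heisenberg modes, and that it measures $\Fh_{(0)}$-momentum, which vanishes on $1 \otimes t$ and (via the weight formula \eqref{eq:wt-tw} together with positive-definiteness of the form on $L \otimes_{\BZ} \BR$, forcing $\ol{g}_{(0)}=0$) on the top weight subspace. Your self-contained verification, including the orthogonality $\pair{\Fh_{(i)}}{\Fh_{(j)}}=0$ for $i+j \not\equiv 0 \pmod 3$ and the positivity argument, is accurate and fills in precisely the details the paper delegates to \cite{SS}.
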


\begin{proof}
Let $a \in \FH_{0}$ and $r=1,\,2$. 
First, it is obvious from the definition of 
the vertex operator on $V_{L}$ (or, is well-known) that 
$a_{0} \in \End_{\BC} V_{L}$ acts on $\FH_{0}$ trivially. 
Next we know from \cite[Lemma~2.2.2\,(1)]{SS} that 
$a_{0} \in \End_{\BC} V_{L}(\tau^{r})$ acts on 
$V_{L}(\tau^{r})_{\rho}$ trivially. 
Finally, it follows immediately from 
\cite[(2.2.8) and (2.2.9)]{SS} that 
$a_{0} \in \End_{\BC} V_{L}(\tau^{r})$ acts on $\FH_{r}$ 
trivially. Thus we have proved the lemma.
\end{proof}
%
%
\subsection{Lie algebra of the weight one subspace.}
\label{subsec:level}

Let $V=\bigoplus _{n \in \BZ_{\ge 0}} V_n$ be an arbitrary VOA, 
with $Y(\cdot\,,\,z):V \rightarrow 
(\End_{\BC} V)[\hspace{-1.5pt}[z,z^{-1}]\hspace{-1.5pt}]$, $a \mapsto Y(a,\,z) = 
\sum_{n \in \BZ} a_{n} z^{-n-1}$, as the vertex operator. 
If $\dim V_0 =1$, then the weight one subspace $V_1$ 
has a Lie algebra structure with the Lie bracket 
defined by $[a,b] := a_0 b$ for $a,b \in V_1$. 
When the Lie algebra $V_{1}$ is a semisimple Lie algebra, 
we define the level of a simple component of $V_{1}$ as follows. 
Assume that $\Fs \subset V_{1}$ is a simple ideal of type $X_{m}$. 
Let $\kappa_{\Fs}(\cdot\,,\,\cdot)$ be the Killing form of $\Fs$ 
normalized so that the norm of a long root of $\Fs$ is equal to $2$. 
Then there exists $\ell_{\Fs} \in \BC$ such that 
for every $x,\,y \in \Fs$ and $u,\,v \in \BZ$, 
\begin{align} \label{eq:level}
[x_u,\,y_v] = (x_0 y)_{u+v} + \ell_{\Fs} u \delta_{u+v,\,0}
\kappa_{\Fs}(x,\,y) \id_{V} \qquad \text{in $\End_{\BC} V$}. 
\end{align}
We call $\ell_{\Fs}$ the level of $\Fs$, and say that 
$\Fs$ is of type $X_{m,\,\ell_{\Fs}}$

Now, keep the notation and setting in \S\ref{subsec:M-Z3}.
Since the VOA $\ti{V}_{L}^{\tau}$ in Theorem~\ref{thm:M}
satisfies $\dim (\ti{V}_{L}^{\tau})_{0} = \dim (V_{L}^{\tau})_{0} = 1$, 
the weight one subspace $(\ti{V}_{L}^{\tau})_{1}$ has 
a Lie algebra structure. 
Because $\ti{V}_{L}^{\tau}$ is holomorphic and $C_{2}$-cofinite, 
it follows immediately from \cite[Theorem~3]{DM04} that 
the Lie algebra $(\ti{V}_{L}^{\tau})_{1}$ is either
$\{ 0 \}$, the abelian Lie algebra of dimension $24$, 
or a semisimple Lie algebra of rank less than or equal to $24$.
%
%
\begin{rem} \label{rem:Z3grad}
For simplicity of notation, we often set
%
%
\begin{equation} \label{eq:g}
\Fg:=(\ti{V}_{L}^{\tau})_{1} = 
 \underbrace{(V_{L}^{\tau})_{1}}_{=:\Fg_0} \oplus 
 \underbrace{V_{L}(\tau)_{1}}_{=:\Fg_1} \oplus 
 \underbrace{V_{L}(\tau^{2})_{1}}_{=:\Fg_2}.
\end{equation}
Because $\ti{V}_{L}^{\tau}=
V_{L}^{\tau} \oplus V_{L}(\tau)_{\BZ} \oplus V_{L}(\tau^{2})_{\BZ}$ 
is a $\BZ_{3}$-grading of the VOA $\ti{V}_{L}^{\tau}$ 
(see Remark~\ref{rem:SCE}\,(1)), 
we see that $\Fg=\Fg_{0} \oplus \Fg_{1} \oplus \Fg_{2}$ 
is a $\BZ_{3}$-grading of the Lie algebra $\Fg$. 
Furthermore, the restriction of 
the VOA automorphism $\phi \in \Aut \ti{V}_{L}^{\tau}$ 
(see Remark~\ref{rem:SCE}\,(1)) to the Lie algebra $\Fg$ 
is nothing but the Lie algebra automorphism corresponding 
to the $\BZ_{3}$-grading (see \cite[\S8.1]{Kac}).
\end{rem}
%
%
\section{VOA structure of $\ti{V}_L^{\tau}$.}
\label{sec:LieStructure}
%
%
\subsection{Main result in \S\ref{sec:LieStructure}.}
\label{subsec:main4}
By the following theorem, 
we conclude that the VOAs obtained in \cite{M} and 
\cite{SS} are all of non-lattice VOAs 
which we can obtain by applying Miyamoto's 
$\BZ_{3}$-orbifold construction to 
a Niemeier lattice and its automorphism. 
%
%
\begin{thm} \label{thm:main4}
Let $L$ be a Niemeier lattice, 
and let $\tau \in \Aut L$ be such that $|\tau|=3$ 
and $\rank L^{\tau} \in 6\BZ$. 
Let $\ti{V}_{L}^{\tau}$ be the holomorphic VOA 
obtained by applying Theorem~\ref{thm:M} to 
these $L$ and $\tau$.

\begin{enu}

\item
If $\tau$ is contained in the Weyl group $G_{0}(L)$, 
then $\ti{V}_{L}^{\tau}$ is isomorphic to the lattice VOA
associated to a Niemeier lattice. 

\item
Assume that $L=\Lambda$, the Leech lattice; 
note that $\rank \Lambda^{\tau} \in \bigl\{0,\,6,\,12\bigr\}$ 
by table \eqref{table:order3}. 
If $\rank \Lambda^{\tau}=0$, then $(\ti{V}_{\Lambda}^{\tau})_{1}=
\{0\}$ (see also Remark~\ref{rem:moonshine} below). 
Otherwise, $\ti{V}_{\Lambda}^{\tau} \cong V_{\Lambda}$. 

\item
Assume that $L \ne \Lambda$ and $\tau \notin G_{0}(L)$; 
note that $\rank L^{\tau} \in \bigl\{0,\,6,\,12\bigr\}$
by table \eqref{table:order3}. 
\begin{enumerate}[\rm (3a)]

\item
If $\rank L^{\tau}=0$ or $6$, then 
$\ti{V}_{L}^{\tau}$ is isomorphic to one of 
the holomorphic non-lattice VOAs obtained in \cite{M} and \cite{SS}. 

\item
If $\rank L^{\tau}=12$, then 
$\ti{V}_{L}^{\tau} \cong V_{L}$. 

\end{enumerate}

\end{enu}
\end{thm}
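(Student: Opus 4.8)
The plan is to handle all four parts through a single mechanism and then split by cases. Since $\ti{V}_{L}^{\tau}$ is holomorphic of central charge $24$ (Theorem~\ref{thm:M}), its weight one Lie algebra $\Fg=(\ti{V}_{L}^{\tau})_{1}$ is, by \cite{DM04}, either $\{0\}$, abelian of dimension $24$, or semisimple of rank at most $24$; and a holomorphic $c=24$ VOA is a lattice VOA precisely when $\Fg$ is abelian of dimension $24$ (forcing $\ti{V}_{L}^{\tau}\cong V_{\Lambda}$) or semisimple of rank $24$ with every simple ideal at level $1$ (forcing $\ti{V}_{L}^{\tau}\cong V_{N}$ for the Niemeier lattice $N$ whose root system is the type of $\Fg$), by Schellekens' list \cite{Sch} and the characterization of lattice VOAs among holomorphic $c=24$ VOAs. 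So in each case the task is to compute $\Fg$ through the $\BZ_{3}$-grading $\Fg=\Fg_{0}\oplus\Fg_{1}\oplus\Fg_{2}$ of \eqref{eq:g}, pin down its rank, and identify the levels; the non-lattice outputs will be exactly those of rank $12$ carrying level $3$ components.

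First I would dispose of the easy cases. For part (3a): by Theorem~\ref{thm:mainS3}\,(2)(ii) such a $\tau$ is conjugate in $\Aut L$ to one of $\sigma_{1},\dots,\sigma_{6}$, and by Remark~\ref{rem:SCE}\,(2) conjugate automorphisms yield isomorphic orbifold VOAs, so $\ti{V}_{L}^{\tau}$ is one of the non-lattice VOAs already constructed in \cite{M} and \cite{SS}. The vanishing case of part (2) is equally short: when $\rank\Lambda^{\tau}=0$ we have $\rho=4/3>1$ by \eqref{eq:topwt}, hence $\Fg_{1}=\Fg_{2}=\{0\}$, while $\Fg_{0}=(V_{\Lambda}^{\tau})_{1}=\Fh^{\tau}=\Fh_{(0)}=\{0\}$ because $\Lambda$ is rootless and $\rank\Lambda^{\tau}=0$; thus $\Fg=\{0\}$ (and $\tau\sim\sigma_{7}$ by Proposition~\ref{prop:Leech}).

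For part (1) I would exploit that $\tau\in G_{0}(L)=W(Q)$ is an inner automorphism of $V_{L}$: the Weyl group is generated by reflections realised inside the connected adjoint-type group generated by $\exp(a_{0})$, $a\in(V_{L})_{1}$. A finite order element of such a group is conjugate, in $\Aut V_{L}$, to a torus element $\exp(2\pi\sqrt{-1}\,h_{(0)})$ with $h\in\Fh$; since conjugate automorphisms give isomorphic orbifolds, $\ti{V}_{L}^{\tau}$ is isomorphic to the orbifold by a Heisenberg automorphism, which is a shifted lattice VOA and hence the lattice VOA of a Niemeier lattice. The remaining substantive cases are the nonzero part of (2) (where $L=\Lambda$ and $\rank\Lambda^{\tau}\in\{6,12\}$) and part (3b) (where $L\ne\Lambda$ and $\rank L^{\tau}=12$, so $\rho=2/3$). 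Here I would build a rank $24$ Cartan of $\Fg$ from $\FH_{0}=\Fh_{(0)}$ together with the twisted weight one spaces $\FH_{1}\subset\Fg_{1}$ and $\FH_{2}\subset\Fg_{2}$, using Lemma~\ref{lem:Ysh} to see that $\FH_{0}$ acts trivially on $\FH_{1}$, $\FH_{2}$ and on the top weight subspaces, so these pieces assemble into an abelian subalgebra of full rank. For $L=\Lambda$ I would then check that no roots survive, so $\Fg$ is abelian of dimension $24$ and $\ti{V}_{\Lambda}^{\tau}\cong V_{\Lambda}$; for $L\ne\Lambda$ with $\rank L^{\tau}=12$ I would match the root system assembled from the $\tau$-fixed roots of $Q$ and the twisted contributions with that of $Q$ and verify all levels equal $1$, giving $\ti{V}_{L}^{\tau}\cong V_{L}$.

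The main obstacle will be the explicit determination of $V_{L}(\tau)_{1}$ and $V_{L}(\tau^{2})_{1}$ and their Lie brackets in the $\rank L^{\tau}=12$ and $6$ cases: one must compute $\dim T(\tau)$ and the weights in \eqref{eq:topwt}, decide which vectors $1\otimes(g\cdot t)$ contribute weight one states, and confirm that the resulting ideals sit at level $1$ rather than level $3$ (the latter being exactly what manufactures the non-lattice VOAs of \cite{M} and \cite{SS}). Controlling these brackets, for which Lemma~\ref{lem:Ysh} is the principal device, is where the real work lies; the reduction to lattice VOAs via \cite{Sch} and the finite case analysis supplied by Theorem~\ref{thm:mainS3} are then routine bookkeeping.
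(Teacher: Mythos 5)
Your dispatch of part (3a) (via Theorem~\ref{thm:mainS3}\,(2)(ii) and Remark~\ref{rem:SCE}\,(2)), of the rank-$0$ Leech case, and --- modulo compressing the two lemmas about fixed subVOAs and simple current extensions into the phrase ``shifted lattice VOA'' --- of part (1) coincides with the paper. The genuine gap is in your uniform mechanism for the remaining cases. The inclusions $\FH_{1}\subset\Fg_{1}$, $\FH_{2}\subset\Fg_{2}$ on which your ``rank $24$ Cartan'' rests hold only when $\rho=2/3$, i.e.\ when $\rank L^{\tau}=12$: by \eqref{eq:topwt}, if $\rank\Lambda^{\tau}=6$ then $\rho=1$, so $\FH_{1}$ and $\FH_{2}$ sit in weight $\rho+1/3=4/3\notin\BZ$ and are not even elements of $\ti{V}_{\Lambda}^{\tau}$, let alone of $\Fg_{1}$, $\Fg_{2}$. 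In that case $\Fg_{1}$ and $\Fg_{2}$ are exactly the top-weight subspaces, and the correct argument runs in the opposite direction from yours: Lemma~\ref{lem:Ysh} gives $[\Fg_{0},\Fg_{1}]=[\Fg_{0},\Fg_{2}]=\{0\}$, so the $6$-dimensional abelian $\Fg_{0}$ is a nonzero abelian ideal, whence $\Fg$ cannot be semisimple and \cite[Theorem~3]{DM04} forces $\Fg$ abelian of dimension $24$ and $\ti{V}_{\Lambda}^{\tau}\cong V_{\Lambda}$. Even in the rank-$12$ Leech case, your step ``check that no roots survive'' is the entire difficulty, not a verification: the paper's device is the $\BZ_{3}$-grading computation (write a root vector as $x=x_{0}+x_{1}+x_{2}$ and compare the gradings of $[h,x_{1}]\in\Fg_{2}$ and $[h,x_{2}]\in\Fg_{0}$ for $h\in\FH_{1}$ with $\alpha(h)\ne 0$), which shows $\FH_{1}\oplus\FH_{2}$ centralizes every root space and so would be an abelian ideal if $\Fg$ were semisimple --- a contradiction; in case (3b) the same spaces enter only through the dimension bound $\dim\Fz\ge 24$.

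For (3b) your identification step is also not viable as stated. Schellekens' list \cite{Sch} constrains the possible Lie algebra structures of the weight one subspace but is not a characterization: it does not determine the VOA (nor even the lattice) from $\Fg$, so ``all simple ideals at level $1$ $\Rightarrow$ $\ti{V}_{L}^{\tau}\cong V_{N}$'' cannot be cited; in any case \cite[Theorem~3]{DM04} already yields $\ti{V}_{L}^{\tau}\cong V_{M}$ for some Niemeier lattice $M$ once $\Fg$ is semisimple of rank $24$ --- and even that needs the prior claim that $\Fg_{0}=(V_{L}^{\tau})_{1}$ is semisimple, which rests on the case-by-case description of $\tau$ from \S\ref{sec:Lsigma6} and is absent from your plan. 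The substantive problem is proving $M=L$, and your route requires the explicit determination of $\Fg_{1}$, $\Fg_{2}$ and their brackets, which you defer and which Lemma~\ref{lem:Ysh} alone does not supply. The paper avoids all twisted-sector computation here: it transports the grading automorphism $\phi$ to $V_{M}$, compares levels of simple ideals inside the isomorphic fixed subVOAs $V_{M}^{\phi}\cong V_{L}^{\tau}$ (level $3$ on the diagonal components coming from cyclically permuted triples, level $1$ on the components fixed by $\tau$), settles the $k=0$ case ($Q=D_{4}^{6}$, $\Fg_{0}\cong\Fg(G_{2})^{\oplus 6}$) via \cite[Proposition~8.1]{Kac}, which forces every simple ideal of $(V_{M})_{1}$ to be of type $D_{4}$, and concludes from the uniqueness of the Niemeier lattice whose root lattice contains $\bigoplus_{m=1}^{3k}Q_{m}$ as a component. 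Without these ingredients your proposal does not close the rank-$6$ Leech case or part (3b).
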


\begin{rem} \label{rem:moonshine}
If $L=\Lambda$ and $\rank \Lambda^\tau=0$, then 
$\ti{V}_\Lambda^\tau$ would be isomorphic to 
the Moonshine VOA $V^{\natural}$ (see \cite[\S3.1]{M}). 
\end{rem}
%
%
\subsection{Proof of Theorem~\ref{thm:main4}\,(1) -- 
case of $\tau \in G_0 (L)$.}
\label{subsec:weyl}
We first assume that $L$ is a positive-definite, even lattice. 
Let $V_{L}$ be the lattice VOA associated to $L$.
For each $a \in (V_{L})_1$, $\exp a_{0}$ is 
a VOA automorphism of $V_{L}$ (see \cite[\S2.3]{DN}), 
where $a_0 \in \End _{\BC} V_L$ denotes the $0$-th 
operator of $a \in V_{L}$. Set
\begin{equation*}
G: = \bigl\langle \exp a_{0} \mid a \in (V_{L})_1 \bigr\rangle 
 \sbg \Aut V_{L} ;
\end{equation*}
notice that the restriction of an element in $G$ to $(V_{L})_{1}$ is 
an inner automorphism of the Lie algebra $(V_{L})_{1}$ 
in the sense of \cite[\S2.3]{H}. 

The next lemma and Lemma~\ref{L2} are well-known 
(or easy exercises for experts), 
but we give proofs for them for completion.
%
%
\begin{lem} \label{L1}
Keep the notation and setting above. Let $\tau \in G$ be of finite order. 
Then the $\tau$-fixed subVOA $V_{L}^{\tau}$ of $V_{L}$ 
is isomorphic to a lattice VOA. 
\end{lem}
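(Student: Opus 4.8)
The strategy is to conjugate $\tau$, within $G$, to a \emph{diagonal} automorphism of the form $\exp\bigl(2\pi\sqrt{-1}\,h(0)\bigr)$ for some $h\in\Fh:=L\otimes_{\BZ}\BC$, and then to identify the fixed-point subVOA of such a diagonal automorphism with the lattice VOA of a sublattice of $L$. Here, for $h\in\Fh$, the operators $h(n)$, $n\in\BZ$, denote the modes of $h(-1)1\otimes e^{0}\in(V_{L})_{1}$, i.e.\ $Y(h(-1)1\otimes e^{0},z)=\sum_{n\in\BZ}h(n)z^{-n-1}$; since $(2\pi\sqrt{-1})\,h(-1)1\otimes e^{0}\in(V_{L})_{1}$, the automorphism $\exp(2\pi\sqrt{-1}\,h(0))$ lies in $G$, and these commute as $h$ varies, forming a torus in $G$.

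First I would reduce to the diagonal case. Recall that $(V_{L})_{1}=\Fh\oplus\bigoplus_{\alpha\in\Delta}\BC(1\otimes e^{\alpha})$ is a reductive Lie algebra with Cartan subalgebra $\Fh$ (abelian when $\Delta=\emptyset$). Since $\tau|_{(V_{L})_{1}}$ is an inner automorphism of finite order, it is a finite-order, hence semisimple, element of the adjoint group $\mathrm{Inn}\bigl((V_{L})_{1}\bigr)$, and therefore conjugate, by an element of $\mathrm{Inn}\bigl((V_{L})_{1}\bigr)$, to an element $\exp(2\pi\sqrt{-1}\,\ad h)$ of the maximal torus, $h\in\Fh$ (all maximal tori being conjugate; this is the standard description of finite-order inner automorphisms of reductive Lie algebras). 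Because the generators $\exp a_{0}$ of $G$ restrict to $\exp(\ad a)$ on $(V_{L})_{1}$, the restriction map $G\to\mathrm{Inn}\bigl((V_{L})_{1}\bigr)$ is surjective; thus I may choose $g\in G$ so that $g\tau g^{-1}$ and $\sigma:=\exp(2\pi\sqrt{-1}\,h(0))\in G$ agree on $(V_{L})_{1}$, using that $\ad h=h(0)|_{(V_{L})_{1}}$.

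Next I would show that $\theta:=\sigma^{-1}g\tau g^{-1}$, which fixes $(V_{L})_{1}$ pointwise, is itself diagonal. As $\theta$ is a VOA automorphism fixing each $h(-1)1\otimes e^{0}$, it commutes with every mode $h(n)=\bigl(h(-1)1\otimes e^{0}\bigr)_{n-1}$, $n\in\BZ$, $h\in\Fh$; in particular it preserves the charge decomposition $V_{L}=\bigoplus_{\alpha\in L}M(1)\otimes e^{\alpha}$ and commutes with the (irreducible) Heisenberg action on each summand. By Schur's lemma $\theta$ acts on $M(1)\otimes e^{\alpha}$ as a scalar $c(\alpha)\in\BC^{\times}$, and $c\colon L\to\BC^{\times}$ is a group homomorphism with $c(0)=1$; writing $c(\alpha)=\exp(2\pi\sqrt{-1}\,\pair{h'}{\alpha})$ for a suitable $h'\in\Fh$, I get $\theta=\exp(2\pi\sqrt{-1}\,h'(0))$. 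Hence $g\tau g^{-1}=\sigma\theta=\exp\bigl(2\pi\sqrt{-1}\,(h+h')(0)\bigr)$ is diagonal; replacing $h$ by $h+h'$, I may assume $g\tau g^{-1}=\exp(2\pi\sqrt{-1}\,h(0))=:\sigma$.

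Finally I would compute the fixed points. The diagonal automorphism $\sigma$ acts on $M(1)\otimes e^{\alpha}$ as the scalar $\exp(2\pi\sqrt{-1}\,\pair{h}{\alpha})$, so $V_{L}^{\sigma}=\bigoplus_{\alpha\in L_{0}}M(1)\otimes e^{\alpha}$, where $L_{0}:=\bigl\{\alpha\in L\mid\pair{h}{\alpha}\in\BZ\bigr\}$. Since $\tau$, and hence $\sigma$, has finite order $N$, one has $Nh\in L^{\ast}$, whence $NL\subseteq L_{0}\subseteq L$; thus $L_{0}$ is a finite-index, and in particular full-rank, sublattice of $L$, so $L_{0}$ is again positive-definite and even with $L_{0}\otimes_{\BZ}\BC=\Fh$. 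Consequently $V_{L}^{\sigma}=M(1)\otimes\BC\{L_{0}\}=V_{L_{0}}$ is the lattice VOA of $L_{0}$, and since $g\in\Aut V_{L}$ with $g(V_{L}^{\tau})=V_{L}^{g\tau g^{-1}}=V_{L}^{\sigma}$, we conclude $V_{L}^{\tau}\cong V_{L_{0}}$, as desired. The main obstacle is the passage from the Lie-algebra-level conjugacy to the VOA level: a priori two automorphisms of $V_{L}$ agreeing on $(V_{L})_{1}$ need not coincide (when $L\neq Q$, the subspace $(V_{L})_{1}$ does not generate $V_{L}$), and this gap is precisely what the Schur-lemma argument of the third paragraph closes by showing the discrepancy $\theta$ is again diagonal.
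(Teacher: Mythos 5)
Your proposal is correct, and its endgame coincides with the paper's: conjugate $\tau$ inside $G$ to a diagonal automorphism $\exp\bigl(2\pi\sqrt{-1}\,h(0)\bigr)$, observe that it acts on $M(1)\otimes e^{\beta}$ by the scalar $\exp\bigl(2\pi\sqrt{-1}\,\pair{h}{\beta}\bigr)$, and identify the fixed points with $V_{L_0}$, where $L_0=\bigl\{\beta\in L\mid\pair{h}{\beta}\in\BZ\bigr\}$ is the paper's sublattice $J$, of full rank because $|\tau|L\subseteq L_0$. Where you genuinely differ is the reduction to the diagonal case. The paper disposes of it in one line: citing Kac's Proposition 8.1, it asserts that $\tau$ itself equals $\exp h_0$ for some $h$ in a Cartan subalgebra $\Fh'$ of $(V_L)_1$, and then conjugates $\Fh'$ to the canonical Cartan subalgebra by some $g\in G$. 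This implicitly treats $G$ as a connected group with reductive Lie algebra $(V_L)_1$ in which every finite-order element lies in a ``maximal torus'' $\exp\bigl(\Fh'(0)\bigr)$ --- a group-level statement that Kac's proposition, which concerns finite-order automorphisms of the Lie algebra, does not literally give. Your two-step route --- first matching $g\tau g^{-1}$ with $\sigma$ on $(V_L)_1$ via conjugacy of finite-order elements into a maximal torus of $\mathrm{Inn}\bigl((V_L)_1\bigr)$ together with surjectivity of the restriction $G\to\mathrm{Inn}\bigl((V_L)_1\bigr)$, then proving by the charge-decomposition and Schur argument that the discrepancy $\theta$, which fixes $(V_L)_1$ pointwise, is itself diagonal --- is more self-contained, and it explicitly addresses the point (which you correctly flag) that an automorphism of $V_L$ is not determined by its restriction to $(V_L)_1$ when $L\neq Q$; this is exactly what the paper's citation glosses over. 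Two small items you should still write out: the multiplicativity of $c$ requires $\theta(u_nv)=(\theta u)_n(\theta v)$ together with the nonvanishing of $(1\otimes e^{\alpha})_{n}(1\otimes e^{\beta})$ for a suitable mode $n$, and --- exactly as in the paper --- the identification of the subVOA $\bigoplus_{\alpha\in L_0}M(1)\otimes e^{\alpha}$ with $V_{L_0}$ tacitly uses that the $2$-cocycle of $L$ restricted to $L_0$ is equivalent to the standard one, since both have the same commutator map.
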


\begin{proof} 
We first remark that $(V_L)_1$ is reductive.
By \cite[Proposition 8.1]{Kac}, 
there exists a Cartan subalgebra $\Fh'$ of $(V_L)_1$ 
such that $\tau=\exp h_0$ for some $h \in \Fh'$.
Since Cartan subalgebras of $(V_L)_1$ are conjugate 
under $G$, there exists $g \in G$ such that $g(\Fh')$ 
is identical to the canonical Cartan subalgebra 
$\bigl\{h(-1)1 \otimes e^{0} \mid h \in \Fh\bigr\}$ of $(V_L)_1$. 
Set $\tau'=g\tau g^{-1}=\exp g(h)_0$. 
Since $g(h)$ is contained in the canonical Cartan subalgebra above, 
we deduce that $\tau'$ acts on $M(1)\otimes e^\beta$ 
as the scalar multiple by $\exp\,\pair{g(h)}{\beta}$ 
for each $\beta \in L$.
Because $|\tau'|=|\tau| < \infty$, 
it follows immediately that 
$\bigl(\exp\,\pair{g(h)}{\beta}\bigr)^{|\tau|}=1$ 
for every $\beta \in L$.
Set $v:=|\tau|g(h)/2\pi\sqrt{-1}$; 
since $\exp\,\pair{g(h)}{\beta} = 
\exp\,\bigl(2\pi\sqrt{-1}\pair{v}{\beta}/|\tau|\bigr)$ for $\beta \in L$, 
we see that $\pair{v}{\beta} \in \BZ$ for every $\beta \in L$, 
and that $\tau'$ acts trivially on $M(1) \otimes e^\beta$ 
if and only if $\pair{v}{\beta} \in |\tau|\BZ$.
So, let us set 
$J:=\bigl\{\beta \in L \mid 
\pair{v}{\beta} \in |\tau|\BZ \bigr\} \subset L$; 
clearly it is a sublattice of $L$.
Since $\pair{v}{L} \subset \BZ$ as seen above, we have $|\tau|L \subset J$, 
and hence $J \otimes_{\BZ} \BC = L \otimes_{\BZ} \BC=\Fh$. 
Therefore we conclude that $V_L^{\tau'}=V_{J}$. 
Since $\tau'$ is conjugate to $\tau$ under $G \sbg \Aut V_{L}$ 
by the definition, it follows immediately that 
$V_L^\tau \cong V_L^{\tau'}$. Combining these, we obtain 
$V_L^\tau \cong V_{J}$, thereby completing 
the proof of the lemma. 
\end{proof}
%
%
\begin{rem}\label{rem:index3} 
It is well-known that if $V_L\cong V_{L'}$ then $L\cong L'$. 
Also, if $V_L^\tau\cong V_J$, then $J$ is isomorphic 
to a sublattice of $L$ with $\#(L/J)=|\tau|$.
\end{rem}
%
%
\begin{lem} \label{L2} 
Let $J$ be a positive-definite, even lattice. 
If $U$ is a simple current extension of the lattice VOA $V_{J}$, 
then $U$ is isomorphic to the lattice VOA associated to 
a sublattice of $J^{\ast}$. 
\end{lem}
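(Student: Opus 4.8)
The plan is to combine the classification of irreducible modules over a lattice VOA with the uniqueness of simple current extensions. First I would recall that for a positive-definite even lattice $J$, the irreducible $V_{J}$-modules are, up to isomorphism, exactly the modules $V_{J+\lambda}$ attached to the cosets $\lambda + J \in J^{\ast}/J$; moreover every such module is a simple current, and the fusion group is isomorphic to the discriminant group $J^{\ast}/J$ with fusion $V_{J+\lambda} \boxtimes V_{J+\mu} \cong V_{J+\lambda+\mu}$ (see \cite[\S6.4 and \S6.5]{LL}). Consequently, any simple current extension $U$ of $V_{J}$ decomposes, as a $V_{J}$-module, as $U = \bigoplus_{\lambda + J \in G} V_{J+\lambda}$ for some subgroup $G \sbg J^{\ast}/J$, with $V_{J}$ itself the summand indexed by the trivial coset $0+J$.

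Next I would introduce the sublattice $K$ of $J^{\ast}$ defined as the preimage of $G$ under the canonical projection $J^{\ast} \twoheadrightarrow J^{\ast}/J$, so that $J \subseteq K \subseteq J^{\ast}$ and $K/J \cong G$. As a $V_{J}$-module, $U$ and $V_{K}$ then carry the same decomposition $\bigoplus_{\lambda + J \in G} V_{J+\lambda}$. The key point is that $K$ is again a positive-definite even lattice, so that $V_{K}$ is itself a lattice VOA; this is not an extra hypothesis but is forced by the VOA axioms on $U$. Indeed, for the intertwining operators among the summands $V_{J+\lambda}$ to assemble into single-valued vertex operators on $U$ with integer powers of $z$ and with all summands $\BZ$-graded by conformal weight, one needs $\pair{\lambda}{\mu} \in \BZ$ and $\pair{\lambda}{\lambda} \in 2\BZ$ for all cosets $\lambda + J,\, \mu + J \in G$, which are precisely the conditions that $K$ be integral and even.

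Finally I would invoke the uniqueness of simple current extensions (see \cite[\S2]{LY}): since both $U$ and $V_{K}$ are simple current extensions of $V_{J}$ sharing the same underlying $V_{J}$-module $\bigoplus_{\lambda + J \in G} V_{J+\lambda}$, they must be isomorphic as VOAs, giving $U \cong V_{K}$ with $K \subseteq J^{\ast}$ as desired. The step I expect to be the main obstacle is the middle one: verifying cleanly that evenness and integrality of $K$ are automatic consequences of $U$ being a genuine VOA rather than merely an abstract module direct sum. I would reduce this to the well-understood monodromy of intertwining operators among lattice modules together with the requirement that the top weights $\pair{\lambda}{\lambda}/2$ of the contributing summands lie in $\BZ$, which pins down exactly the even integral structure on $K$.
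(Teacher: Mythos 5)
Your proposal is correct and follows essentially the same route as the paper's proof: classify the irreducible $V_{J}$-modules as $V_{\lambda+J}$ for $\lambda+J \in J^{\ast}/J$, use the fusion rule $V_{\lambda+J} \boxtimes V_{\mu+J} \cong V_{\lambda+\mu+J}$ to see that the index set is a subgroup $M/J$ of $J^{\ast}/J$, and conclude $U \cong V_{M}$ by the uniqueness of simple current extensions. The one place you go beyond the paper is the explicit verification that $M$ is even and integral (via monodromy and integrality of the weights $\pair{\lambda}{\lambda}/2$), a point the paper leaves implicit; this is a welcome refinement rather than a different approach.
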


\begin{proof} 
By \cite{D}, every irreducible $V_J$-module is 
isomorphic to $V_{\lambda+J}$ for some $\lambda+J\in J^*/J$.
Hence there exists a subset $S \subset J^*/J$ 
such that $U \cong \bigoplus_{\lambda+J\in S} V_{\lambda+J}$.
By the fusion product $V_{\lambda+J} \boxtimes V_{\mu+J} \cong 
V_{\lambda+\mu+J}$ (see \cite[Corollary 12.10]{DL93}), 
we deduce that the subset $S$ is a subgroup of $J^*/J$.
Hence there exists a sublattice $M \subset J^*$ such that $S=M/J$.
By the uniqueness of simple current extensions 
(see \cite[Proposition 5.3]{DM04b}), we have $U \cong V_M$ as VOAs.
Thus we have proved the lemma.
\end{proof}
Combining these lemmas, we obtain the following proposition. 
%
%
\begin{prop} \label{prop:orbifold}
Let $L$ be a positive-definite, even lattice, and 
let $\tau \in G$ be of finite order. 
If $\ti{V}_L^\tau$ is a simple current 
extension of $V_{L}^{\tau}$, 
then it is isomorphic to a lattice VOA.
\end{prop}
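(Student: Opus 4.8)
The plan is simply to concatenate the two preceding lemmas, which were set up for exactly this situation. First I would apply Lemma~\ref{L1}: since $\tau \in G$ is of finite order, it gives an isomorphism of VOAs $\Psi : V_{L}^{\tau} \stackrel{\sim}{\longrightarrow} V_{J}$ for some positive-definite even lattice $J$; indeed, by Remark~\ref{rem:index3}, $J$ may be taken to be a sublattice of $L$ of index $|\tau|$.

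Next, I would transport the simple current extension structure across $\Psi$. By hypothesis $\ti{V}_{L}^{\tau}$ is a simple current extension of $V_{L}^{\tau}$. Because the class of simple current modules is intrinsic to the isomorphism type of the base VOA --- a VOA isomorphism induces an equivalence of module categories that preserves the fusion product, and simple currents are precisely the invertible objects for this product --- the isomorphism $\Psi$ carries the decomposition of $\ti{V}_{L}^{\tau}$ into simple current $V_{L}^{\tau}$-modules to a decomposition into simple current $V_{J}$-modules. Thus $\ti{V}_{L}^{\tau}$ is also a simple current extension of $V_{J}$. I would then invoke Lemma~\ref{L2} for the lattice $J$, which yields $\ti{V}_{L}^{\tau} \cong V_{M}$ for some (positive-definite, even) sublattice $M$ of $J^{\ast}$; this $V_{M}$ is a lattice VOA, completing the proof.

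The one step I would treat with care --- everything else being a direct citation --- is the transport in the middle paragraph: one must confirm that ``simple current extension of $V_{L}^{\tau}$'' genuinely becomes ``simple current extension of $V_{J}$'' under the \emph{abstract} isomorphism produced by Lemma~\ref{L1}, rather than degenerating to an extension by arbitrary irreducible modules. This is a formal consequence of $\Psi$ being an equivalence at the level of representations, but it is the only place where the hypothesis ``simple current'' (as opposed to merely ``some'' module extension) is actually used, so it deserves to be spelled out explicitly.
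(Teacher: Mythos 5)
Your proof is correct and is essentially the paper's own argument: the paper obtains the proposition by exactly this concatenation, applying Lemma~\ref{L1} to replace $V_{L}^{\tau}$ by $V_{J}$ and then Lemma~\ref{L2} to the resulting simple current extension. Your explicit attention to transporting the simple-current property across the abstract isomorphism $V_{L}^{\tau} \cong V_{J}$ is a point the paper leaves implicit, but it is the same route.
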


\begin{proof}[Proof of Theorem~\ref{thm:main4}\,(1)]
We deduce from 
\cite[Lemma 3.8]{Kac} and \cite[Lemma 2.5]{DN} that 
$\tau \in G=\langle \exp a_{0} \mid a \in (V_{L})_1 \rangle$.  
It follows immediately from Remark~\ref{rem:SCE}\,(1) and 
Proposition~\ref{prop:orbifold} that 
the VOA $\ti{V}_{L}^{\tau}$ is isomorphic to a lattice VOA. 
Because the central charge of $\ti{V}_{L}^{\tau}$ 
is equal to $24$, the rank of the lattice is equal to $24$. 
Furthermore, because $\ti{V}_{L}^{\tau}$ is holomorphic, 
it follows immediately that the lattice is unimodular. 
Hence the lattice is a Niemeier lattice. 
Thus we have proved Theorem~\ref{thm:main4}\,(1).
\end{proof}
%
%
\subsection{Proof of Theorem~\ref{thm:main4}\,(2) -- 
case of the Leech lattice.}
\label{subsec:leech}

Recall that the Leech lattice $\Lambda$ is 
a unique Niemeier lattice whose root lattice $Q$ is 
identical to $\bigl\{ 0 \bigr\}$.

Now, let us start to prove Theorem~\ref{thm:main4}\,(2). 
The assertion for the case of $\rank \Lambda^{\tau}=0$ 
has been proved in \cite[\S3.1]{M}.
Assume that $\rank \Lambda^{\tau}=6$ or $12$.
For simplicity of notation, set
\begin{equation*}
\Fg := (\ti{V}_\Lambda^\tau)_1 = 
\underbrace{(V_\Lambda^\tau)_1}_{=:\Fg_0} \oplus 
\underbrace{V_{\Lambda} (\tau)_1}_{=:\Fg_1} \oplus 
\underbrace{V_{\Lambda} (\tau^{2})_1}_{=:\Fg_2}; 
\end{equation*}
note that $\Fg_0= 
\bigl\{ h(-1)1 \otimes e^{0} \mid h \in \Fh_{(0)} \bigr\} 
= \FH_{0}$ (with notation in Lemma~\ref{lem:Ysh}) 
since $Q = \{ 0 \}$. Therefore, $\Fg_0$ 
is an abelian Lie subalgebra of $\Fg$ by Lemma~\ref{lem:Ysh}, and 
$\dim \Fg_0 = \rank \Lambda^\tau \in \bigl\{ 6,\,12 \bigr\}$. 

We first assume that $\rank \Lambda^\tau=6$. 
Then we see from \eqref{eq:topwt} that the top weights of 
$V_{\Lambda} (\tau)$ and $V_{\Lambda} (\tau^{2})$ 
are both equal to $1$, and hence 
$\Fg_{1}$ and $\Fg_{2}$ are the top weight subspaces of 
$V_{\Lambda} (\tau)$ and $V_{\Lambda} (\tau^{2})$, respectively. 
Therefore it follows immediately from Lemma~\ref{lem:Ysh}
that $[\Fg_{0},\,\Fg_{1}]=[\Fg_{0},\,\Fg_{2}]=\{0\}$, 
which implies that 
$\Fg_0$ is a (nontrivial) abelian ideal of $\Fg$. 
Thus we conclude by \cite[Theorem~3]{DM04} that 
$(\ti{V}_{\Lambda}^{\tau})_1$ is 
an abelian Lie algebra of rank $24$, and 
$\ti{V}_\Lambda^{\tau} \cong V_\Lambda$, as desired. 

We next assume that $\rank \Lambda^\tau=12$.
By \cite[Theorem~3]{DM04}, $\Fg$ is abelian or semisimple.
Suppose, by contradiction, that $\Fg$ is semisimple.
We deduce from \cite[(2.2.8) and (2.2.9)]{SS} 
that $\ad a = a_{0}$ is diagonalizable on $\Fg$ 
for every element $a \in \Fg_{0}=\FH_{0}$.
Thus, by \cite[Lemma 8.1\,b)]{Kac}, 
the centralizer $\Fz$ of $\Fg_0$ in $\Fg$ is 
a Cartan subalgebra of $\Fg$. 
Define $\FH_{1}$ and $\FH_{2}$ as in Lemma~\ref{lem:Ysh}; 
note that $\FH_{r} \subset (V_{L}(\tau^{r}))_{1}$ for $r=1,\,2$ 
since $\rank L^{\tau}=12$, and hence $\rho=2/3$. 
It follows immediately from Lemma \ref{lem:Ysh} that 
$\FH _1 \oplus \FH _2 \subset \Fz$, and 
hence $\FH_1 \oplus \FH _2$ is 
an abelian subalgebra of $\Fg$. 
Let $\Fg^{\alpha} \subset \Fg$ be the root space of $\Fg$ 
corresponding to $\alpha \in \Fz^{\ast}:=\Hom_{\BC}(\Fz,\,\BC)$ 
(with respect to $\Fz$). 
Then we have $[\FH _1 \oplus \FH _2,\, \Fg ^{\alpha}] = \{0\}$ 
for all $\alpha \in \Fz^{\ast}$; 
indeed, let $x \in \Fg^{\alpha}$, and let $h \in \FH_{1}$. 
If $\alpha(h)=0$, then we have $[h,\,x]=\alpha(h)x=0$. 
Assume that $\alpha(h) \ne 0$. 
Write $x \in \Fg^{\alpha}$ as: 
$x = x_0 + x_1 + x_2 \in \Fg ^{\alpha}$, 
with $x_i \in \Fg _i$ for $i=0,\,1,\,2$. 
Since both $x_{0} \in \Fg_{0}$ and 
$h \in \FH _1$ are contained 
in the Cartan subalgebra $\Fz$, 
we see that $[h,\,x_{0}]=0$. 
Since $\Fg=\Fg_{0} \oplus \Fg_{1} \oplus \Fg_{2}$ is 
a $\BZ_{3}$-grading of $\Fg$, we get 
\begin{equation*}
\alpha(h)x_{0}+\alpha(h)x_{1}+\alpha(h)x_{2}=
\alpha(h)x = [h,\,x] = 
 \underbrace{[h,\,x_{1}]}_{\in \Fg_{2}}+
 \underbrace{[h,\,x_{2}]}_{\in \Fg_{0}}.
\end{equation*}
Since $\alpha(h)$ is assumed to be nonzero, 
we obtain $x_{1}=0$. Substituting this into the equality above, 
we get $x_{0}=0$, and then $x_{2}=0$. Thus, $x=0$, 
and in particular, $[h,\,x]=0$. Similarly, we can show that 
$[h,\,x]=0$ for all $h \in \FH_{2}$. 
Therefore, $\FH _1 \oplus \FH _2$ is a (nontrivial) abelian ideal 
of $\Fg$, which contradicts the assumption that 
$\Fg$ is semisimple. Hence we conclude by \cite[Theorem 3]{DM04} 
that $\Fg$ is abelian, and $\ti{V}_\Lambda^\tau \cong V_\Lambda$, 
as desired. Thus we have proved Theorem~\ref{thm:main4}\,(2).
%
%
\subsection{Proof of Theorem~\ref{thm:main4}\,(3) -- 
case that $L \ne \Lambda$ and $\tau \notin G_{0}(L)$.}
\label{subsec:Nie3}

Recall from \S\ref{sec:Lsigma6} the classification of 
the automorphisms $\tau \in \Aut L$ 
satisfying \eqref{eq:tau}. 
If $\rank L^{\tau} = 0$ or $6$, then 
we know from Theorem~\ref{thm:mainS3}\,(2ii) 
that $\tau$ is conjugate 
to one of $\sigma_{1},\,\dots,\,\sigma_{6}$. Thus, 
Theorem~\ref{thm:main4}\,(3a) follows immediately from 
Remark~\ref{rem:SCE}\,(2). 

In order to prove Theorem~\ref{thm:main4}\,(3b), 
we need the following lemma, which can be shown 
in exactly the same way as Lemma~\ref{lem:rank}. 
%
%
\begin{lem} \label{lem:cyclic}
Let $\CL$ be a Lie algebra, and 
let $\CI_{q}$, $1 \le q \le 4$, be ideals of $\CL$ 
such that $\CL=\bigoplus_{q=1}^{4} \CI_{q}$. 
Assume that a Lie algebra automorphism $\phi \in \Aut \CL$ 
of order $3$ acts on $\CL$ as: 
$\phi(\CI_{1})=\CI_{2}$, $\phi(\CI_{2})=\CI_{3}$, 
$\phi(\CI_{3})=\CI_{1}$, $\phi(\CI_{4})=\CI_{4}$. 
Then the $\phi$-fixed Lie subalgebra $\CL^{\phi}$ 
of $\CL$ is isomorphic to $\CI_{1} \oplus \CI_{4}^\phi$.
\end{lem}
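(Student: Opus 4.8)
The plan is to mimic the proof of Lemma~\ref{lem:rank}\,(2), the only genuinely new ingredient being the verification that the vector-space isomorphism one builds there actually respects the Lie bracket. First I would record the structural fact that, since $\CI_{1},\dots,\CI_{4}$ are ideals of $\CL$ whose sum is direct, we have $[\CI_{i},\CI_{j}] \subseteq \CI_{i} \cap \CI_{j} = \{0\}$ whenever $i \ne j$. In particular $[\CI_{1}\oplus\CI_{2}\oplus\CI_{3},\,\CI_{4}]=\{0\}$, and because $\phi$ stabilizes each of the two $\phi$-invariant ideals $\CI_{1}\oplus\CI_{2}\oplus\CI_{3}$ and $\CI_{4}$, the fixed subalgebra splits as a direct sum of ideals
\begin{equation*}
\CL^{\phi} = (\CI_{1}\oplus\CI_{2}\oplus\CI_{3})^{\phi} \oplus \CI_{4}^{\phi}.
\end{equation*}
So it suffices to produce a Lie algebra isomorphism $(\CI_{1}\oplus\CI_{2}\oplus\CI_{3})^{\phi} \cong \CI_{1}$.

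Next, exactly as in Lemma~\ref{lem:rank}\,(2), I would check that
\begin{equation*}
(\CI_{1}\oplus\CI_{2}\oplus\CI_{3})^{\phi} = \bigl\{ x + \phi(x) + \phi^{2}(x) \mid x \in \CI_{1} \bigr\},
\end{equation*}
using that $\phi$ cyclically permutes $\CI_{1},\CI_{2},\CI_{3}$: an element $y_{1}+y_{2}+y_{3}$ with $y_{i}\in\CI_{i}$ is $\phi$-fixed precisely when $y_{2}=\phi(y_{1})$ and $y_{3}=\phi^{2}(y_{1})$, the remaining relation $\phi(y_{3})=y_{1}$ being automatic since $\phi^{3}=\id$. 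Consequently the map $\iota:\CI_{1}\to(\CI_{1}\oplus\CI_{2}\oplus\CI_{3})^{\phi}$, $x\mapsto x+\phi(x)+\phi^{2}(x)$, is a linear isomorphism.

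The one place the Lie-algebraic setting departs from the lattice setting is in showing that $\iota$ is a homomorphism of Lie algebras, whereas the corresponding map in Lemma~\ref{lem:rank} was only a $\BZ$-module map not preserving the bilinear form. Here the cross terms cooperate: for $x,y\in\CI_{1}$, every bracket $[\phi^{i}(x),\phi^{j}(y)]$ with $i\ne j$ vanishes by the first step, so
\begin{equation*}
[\iota(x),\iota(y)] = [x,y] + [\phi(x),\phi(y)] + [\phi^{2}(x),\phi^{2}(y)] = [x,y] + \phi([x,y]) + \phi^{2}([x,y]) = \iota([x,y]),
\end{equation*}
the middle equality using that $\phi\in\Aut\CL$. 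Thus $\iota$ is a Lie algebra isomorphism, and combining this with the splitting of the first paragraph yields $\CL^{\phi}\cong\CI_{1}\oplus\CI_{4}^{\phi}$. I do not anticipate any serious obstacle; the only subtlety is the vanishing of the cross terms, which is precisely the ingredient that replaces the ``does not preserve the $\BZ$-bilinear form'' remark in the proof of Lemma~\ref{lem:rank}.
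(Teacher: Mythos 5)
Your proposal is correct and follows essentially the same route as the paper, which gives no separate proof but states that Lemma~\ref{lem:cyclic} can be shown in exactly the same way as Lemma~\ref{lem:rank}; your write-up is precisely that adaptation, the map $x \mapsto x + \phi(x) + \phi^{2}(x)$ on $\CI_{1}$ together with the splitting off of $\CI_{4}^{\phi}$. The one genuinely new verification in the Lie-algebra setting, that the map respects brackets because $[\CI_{i},\,\CI_{j}] \subseteq \CI_{i} \cap \CI_{j} = \{0\}$ for $i \ne j$ kills the cross terms while $\phi \in \Aut \CL$ handles the diagonal ones, is exactly the point you identify and handle correctly.
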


\begin{proof}[Proof of Theorem~\ref{thm:main4}\,(3b)]
Let $Q$ be the root lattice of $L$.
Since $\rank L^{\tau}=12$ by assumption, 
we see from table~\eqref{table:order3} that 
the type of $Q$ is one of the following: 
$A_5 ^4 D_4$ (see Proposition~\ref{prop:autoA5}), and
$A_1 ^{24}$, $A_3 ^8$, $A_6 ^4$, $D_6^4$, 
$A_2 ^{12}$, $E_6 ^4$
(see Proposition~\ref{prop:autoA1}), and
$D_4 ^6$ (see Propositions~\ref{prop:autoD4a} and 
  \ref{prop:autoD4b}). 
By these propositions, 
along with Remark~\ref{rem:SCE}\,(2), 
we may assume that $\tau$ acts on the set 
$\SC_{Q}=\bigl\{Q_{m} \mid 1 \le m \le n\bigr\}$ of 
indecomposable components of $Q$ as follows:
there exists $0 \le k \le n/3$ such that 
for $1 \le m \le n$, 
\begin{equation} \label{eq:tauQm}
\tau(Q_{m})=
 \begin{cases}
   Q_{m+k} & \text{\rm if $1 \le m \le 2k$}, \\[1.5mm]
   Q_{m-2k} & \text{\rm if $2k+1\le m \le 3k$}, \\[1.5mm]
   Q_m & \text{\rm if $3k+1 \le m \le n$}, 
 \end{cases}
\end{equation}
and for each $3k+1 \le m \le n$, 
the restriction $\tau|_{Q_{m}} \in \Aut Q_{m}$ 
of $\tau$ to $Q_{m}$ is one of the identity map, 
a conjugation of $\omega$ in $P$, and 
a conjugation of $\omega^{-1}$ in $P$ 
(for the definitions of $\omega$ and $P$, 
see \S\ref{subsec:root} and \S\ref{subsec:autQm}, respectively). 

\begin{claim} \label{c:r12-1}
The Lie algebra $\Fg_{0}:=(V_L^\tau)_1$ is 
a semisimple Lie algebra, with 
$\FH_{0} = \bigl\{ h(-1)\otimes e^0 \mid 
h \in \Fh _{(0)} \bigr\}$ as a Cartan subalgebra.
\end{claim}

\noindent
{\it Proof of Claim~\ref{c:r12-1}.}
For each $1 \le m \le n$, let $\Fg(Q_{m})$ be 
the simple ideal of $(V_{L})_{1}$ corresponding to $Q_{m}$; 
we have $(V_{L})_{1}=\bigoplus_{m=1}^{n} \Fg(Q_{m})$.
By \eqref{eq:tau-VOA}, the isomorphism $\tau$ in \eqref{eq:tauQm} 
induces an automorphism $\tau \in \Aut V_{L}$, 
which permutes the Lie subalgebras $\Fg(Q_{m})$'s as in \eqref{eq:tauQm}. 
Therefore it follows from Lemma~\ref{lem:cyclic} that
%
%
\begin{equation} \label{eq:r12-1}
\Fg_{0}=
(V_{L}^{\tau})_{1} \cong 
\bigoplus_{m=1}^{k} \Fg(Q_{m}) \oplus 
\bigoplus_{m=3k+1}^{n} \Fg(Q_{m})^{\tau}.
\end{equation}
Because $\tau|_{Q_{m}}=\id$, $\omega$, or $\omega^{-1}$ 
(up to conjugation) for each $3k+1 \le m \le n$, 
it follows immediately that $\Fg(Q_{m})^{\tau}$ 
is either $\Fg(Q_{m})$ or the simple Lie algebra of type $G_{2}$ 
for each $3k+1 \le m \le n$. 
Thus we conclude that $\Fg_{0}=(V_{L}^{\tau})_{1}$ is semisimple. 
Also, we can easily check that $\FH_{0}$ is a Cartan subalgebra of 
$\Fg_{0}=(V_{L}^{\tau})_{1}$ since $\tau \in \Aut V_{L}$ also 
permutes the canonical Cartan subalgebras
$\bigl\{h(-1)1 \otimes e^{0} \mid 
 h \in Q_{m} \otimes_{\BZ} \BC \bigr\}$ of $\Fg_{m}$, 
$1 \le m \le n$, as in \eqref{eq:tauQm}. \bqed

\vsp

For simplicity of notation, we set
\begin{equation*}
\Fg := (\ti{V}_L^\tau)_1 = 
\underbrace{(V_L^\tau)_1}_{=\Fg_0} \oplus 
\underbrace{V_{L} (\tau)_1}_{=:\Fg_1} \oplus 
\underbrace{V_{L} (\tau^{2})_1}_{=:\Fg_2}; 
\end{equation*}
recall that 
$\Fg=\Fg_{0} \oplus \Fg_{1} \oplus \Fg_{2}$ is 
a $\BZ_{3}$-grading of $\Fg$ (see Remark~\ref{rem:Z3grad})

\begin{claim} \label{c:r12-2}
The Lie algebra $\Fg$ is a semisimple Lie algebra of rank $24$, 
and the VOA $\ti{V}_{L}^{\tau}$ is isomorphic to 
the lattice VOA associated to a Niemeier lattice. 
\end{claim}

\noindent
{\it Proof of Claim~\ref{c:r12-2}.}
By \cite[Lemma 8.1\,b)]{Kac}, along with Claim~\ref{c:r12-1}, 
the centralizer $\Fz$ of $\FH _0 \subset \Fg_{0}$ in $\Fg$  
is a Cartan subalgebra of $\Fg$. 
We see from Lemma~\ref{lem:Ysh} that 
$\Fz$ contains $\FH_1 \oplus \FH_{2}$ with notation therein; 
notice that $\FH_1 \subset V_{L}(\tau)_{1}$ and 
$\FH_2 \subset V_{L}(\tau^{2})_{1}$ since 
$\rank L^{\tau}=12$, and hence $\rho=2/3$. 
Thus we obtain
\begin{align}
\dim \Fz & \ge \dim \FH_{0} + \dim \FH_{1} + \dim \FH_{2} 
\nonumber \\
& = \dim \Fh_{(0)} + 
 \{\dim \Fh_{(2)} \times \dim T(\tau)\} + 
 \{\dim \Fh_{(1)} \times \dim T(\tau^{2})\} \nonumber \\
& \ge \dim \Fh_{(0)}+\dim\Fh_{(1)}+\dim\Fh_{(2)}=\dim \Fh = 24,
\label{eq:cartan}
\end{align}
which implies that $\rank \Fg \ge 24$; 
however, since $\rank \Fg \le 24$ by \cite[Theorem~3]{DM04}, 
we get $\rank \Fg = 24$. 
Because $\Fg$ is not abelian 
(indeed, $\Fg_{0} \subset \Fg$ 
is semisimple, and hence not abelian), 
we conclude by \cite[Theorem~3]{DM04} that 
$\Fg=(\ti{V}_{L}^{\tau})_{1}$ is 
a semisimple Lie algebra of rank $24$, and
the VOA $\ti{V}_L^\tau$ is isomorphic to 
the lattice VOA associated to a Niemeier lattice. 
Thus we have proved Claim~\ref{c:r12-2}. \bqed

\vsp

Let $M$ be the Niemeier lattice such that 
$V_{M} \cong \ti{V}_{L}^{\tau}$, with the root lattice $R$;
note that $(V_{M})_{1} \cong \Fg$. 
To complete our proof of Theorem~\ref{thm:main4}\,(3b),
we will verify that the type of 
the semisimple Lie algebra $(V_{M})_{1} \ (\cong \Fg)$, 
or equivalently, 
the type of the root lattice $R$ of $M$ is the same as 
the type of the root lattice $Q$ of $L$.

First, let us recall from Remark~\ref{rem:SCE}\,(1) that 
$V_{M} \cong \ti{V}_{L}^{\tau} = 
V_{L}^{\tau} \oplus V_{L}(\tau)_{\BZ} \oplus V_{L}(\tau^{2})_{\BZ}$ is 
a $\BZ_{3}$-grading of the VOA $V_{M} \cong \ti{V}_{L}^{\tau}$. 
Define $\phi \in \Aut V_{M}$ by: 
$\phi|_{V_{L}^{\tau}}=1$, 
$\phi|_{V_{L}(\tau)_{\BZ}}=\zeta$, and 
$\phi|_{V_{L}(\tau^{2})_{\BZ}}=\zeta^{2}$ (see Remark~\ref{rem:SCE}\,(1)).
By Remark~\ref{rem:Z3grad}, 
the restriction of $\phi$ to the Lie algebra 
$(V_{M})_{1} \cong (\ti{V}_{L}^{\tau})_{1} = \Fg$, 
denoted also by $\phi$, is nothing but 
the Lie algebra automorphism corresponding to 
the $\BZ_{3}$-grading 
$\Fg=\Fg_{0} \oplus \Fg_{1} \oplus \Fg_{2}$; 
in particular, $(V_{M}^{\phi})_{1} \cong 
\Fg^{\phi} = \Fg_{0}$. 

Next, let $(V_{M})_{1}=\bigoplus_{q=1}^{p} \Fs_{q} \ (\cong \Fg)$ 
be the unique decomposition of $(V_{M})_{1} \cong \Fg$ 
into its simple ideals (see \cite[Theorem 5.2]{H}); 
we should remark that $\Fs_{q}$'s are all of simply-laced type. 
By the uniqueness of 
the decomposition, we see that 
the Lie algebra automorphism $\phi$ above 
naturally induces a permutation on the set 
$\bigl\{\Fs_{q} \mid 1 \le q \le p\bigr\}$ 
of simple ideals; we may assume that 
%
%
\begin{equation} \label{eq:phi-sq-D4}
\phi(\Fs_{q})=
  \begin{cases}
    \Fs_{q+r} & \text{\rm if $1 \le q \le 2r$}, \\[1.5mm]
    \Fs_{q-2r} & \text{\rm if $2r+1 \le q \le 3r$}, \\[1.5mm]
    \Fs_q & \text{\rm if $3r+1 \le q \le p$}
  \end{cases}
\end{equation}
for $1 \le q \le p$, 
with some $0 \le r \le p/3$. 
By Lemma~\ref{lem:cyclic}, 
%
%
\begin{equation} \label{eq:fg-phi-D4}
(V_{M}^{\phi})_{1} \cong 
\bigoplus_{q=1}^{r} \Fs_q \oplus 
\bigoplus_{q=3r+1}^{p} \Fs_{q}^{\phi} \quad 
(\cong \Fg^{\phi} = \Fg_{0}).
\end{equation}

Finally, recall the definition of $0 \le k \le n/3$ 
from \eqref{eq:tauQm}. 

\paragraph{Case 1.}
%
Assume that $k = 0$. Then we have 
$Q=D_{4}^{6}$ and $\tau=\omega^{(6)}$ 
(see Proposition~\ref{prop:autoD4a}), 
and hence by \eqref{eq:r12-1},
\begin{equation*}
\Fg_{0} \cong \bigl(\Fg(D_{4})^{\omega}\bigr)^{\oplus 6} 
\cong \Fg(G_{2})^{\oplus 6}.
\end{equation*}
Combining this and \eqref{eq:fg-phi-D4}, 
we obtain $\Fs_{q} \cong \Fg(G_{2})$ 
for all $1 \le q \le r$. 
However, all $\Fs_{q}$'s for $1 \le q \le r$ are 
of simply-laced type. Hence we get $r=0$ and $p=6$. 

By \cite[Proposition~8.1]{Kac}, for each $1 \le q \le 6$, 
there exists a Cartan subalgebra $\Ft_{q}$ of $\Fs_{q}$, 
a Dynkin diagram automorphism $\psi_q$ of 
$\Fs_{q}$ preserving $\Ft_{q}$, 
and an element $h_q$ in the $\psi_{q}$-fixed Lie subalgebra 
$\Ft_{q}^{\psi_{q}} \subset \Ft_{q}$ such that 
%
%
\begin{equation} \label{eq:restq}
\phi|_{\Fs_{q}} = 
 \psi_q \exp \left(\frac{2\pi\sqrt{-1}}{3}h_q \right);
\end{equation}
remark that the order of $\psi_q$ is equal to $1$ or $3$, 
since so is $\phi|_{\Fs_{q}}$. 
Suppose that $\psi_q$ is the identity map 
for some $1 \le q \le 6$. 
Then we see that $\Fs_{q}^{\phi}$ is a reductive Lie algebra 
of simply-laced type (see also \cite[Lemma~8.1\,c)]{Kac}) 
since $\Fs_{q}$ is a simple Lie algebra of simply-laced type. 
However, this contradicts the fact that $\Fs_{q}^{\phi} \cong 
\Fg(G_{2})$. Thus, $\psi_q$ is of order $3$ 
for every $1 \le q \le 6$, which implies that 
$\Fs_{q}$ is of type $D_{4}$ for every $1 \le q \le 6$. 
Thus we get $M=L$, as desired. 

\paragraph{Case 2.}
%
Assume that $k > 0$. By \eqref{eq:r12-1}, we have
\begin{equation*}
\Fg_{0}=(V_{L}^{\tau})_{1} \cong 
\bigoplus_{m=1}^{k} \Fg(Q_{m}) \oplus 
\bigoplus_{m=3k+1}^{n} \Fg(Q_{m})^{\tau}.
\end{equation*}
We deduce from the definition that 
all $\Fg(Q_{m})$'s for $1 \le m \le k$ 
(resp., all $\Fg(Q_{m})^{\tau}$'s for $3k+1 \le m \le n$) 
are simple ideals of $\Fg_{0}=(V_{L}^{\tau})_{1} \subset 
V_{L}^{\tau}$ of level $3$ (resp., of level $1$).
Similarly, in \eqref{eq:phi-sq-D4}, 
observe that all $\Fs_q$'s for $1 \le q \le r$ 
(resp., $3r+1 \le q \le p$) are 
simple ideals of $(V_{M}^{\phi})_{1} \subset V_{M}^{\phi}$
of level $3$ (resp., $1$). 
Here we should recall that 
$V_{M}^{\phi} \cong V_{L}^{\tau}$ as VOAs. 
Thus we obtain $k=r$, 
and $\Fg(Q_{m}) \cong \Fs_{m}$ for all $1 \le m \le k$, 
which implies that the root lattice $R$ contains 
$\bigoplus_{m=1}^{3k} Q_{m}$ as its component. 
It follows immediately from the list of Niemeier lattices 
(see \cite[Chapter 16, Table 16.1]{CS} for example) that 
such a Niemeier lattice is unique. 
Thus we get $M=L$, as desired. 
This completes the proof of 
Theorem~\ref{thm:main4}\,(3b).
\end{proof}

{\small
\setlength{\baselineskip}{13pt}
\renewcommand{\refname}{References}

}

\end{document}